\numberwithin{equation}{section}
\DeclareMathOperator\tr{\mathrm{tr}}
\DeclareMathOperator\Hess{\mathrm{Hess}}
\DeclareMathOperator\del{\partial}
\DeclareMathOperator\dvol{dvol}
\newcommand{\bbR}{\mathbb{R}}
\newcommand{\bbT}{\mathbb{T}}
\newcommand{\bbC}{\mathbb{C}}
\newcommand{\bbD}{\mathbb{D}}
\newcommand{\ip}{\cdot}
\newcommand{\mcC}{\mathcal{C}}
\newcommand{\mcH}{\mathcal{H}}
\newcommand{\bcp}{\mathbb{CP}}
\newcommand{\inte}{\mathrm{int}}
\newtheorem{theorem}{Theorem}
\newtheorem{corollary}{Corollary}
\newtheorem{proposition}{Proposition}
\newtheorem{question}{Question}
\theoremstyle{definition} \newtheorem{definition}{Definition}
\newtheorem{example}{Example}
\newtheorem{remark}{Remark}
\author{Gon\c{c}alo Oliveira}
\address[Gon\c{c}alo Oliveira]{Universidade Federal Fluminense IME--GMA, Niter\'oi, Brazil}
\urladdr{\href{https://sites.google.com/view/goncalo-oliveira-math-webpage/home}{sites.google.com/view/goncalo-oliveira-math-webpage/home}}
\email{\href{mailto:galato97@gmail.com}{galato97@gmail.com}}
\author{Rosa Sena-Dias} 
\address[Rosa Sena-Dias]{Instituto Superior T\'ecnico}
\email{rsenadias@math.ist.utl.pt}
\title[]{Minimal Lagrangian tori and action-angle coordinates} 
\thanks{This work was partially supported by FCT/Portugal through project PTDC/MAT-GEO/1608/2014, by Funda\c{c}\~ao Serrapilheira 1812-27395, by CNPq grants 428959/2018-0 and 307475/2018-2, and FAPERJ through the program Jovem Cientista do Nosso Estado.}
\begin{document}

\begin{abstract}
We investigate which orbits of an $n$-dimensional torus action on a $2n$-dimensional toric K\"ahler manifold $M$ are minimal. In other words, we study minimal submanifolds appearing as the fibres of the moment map on a toric K\"ahler manifold. Amongst other questions we investigate and give partial answers to the following: (1) How many such minimal Lagrangian tori exist? (2) Can their stability, as critical points of the area functional, be characterised just from the ambient geometry? (3) Given a toric symplectic manifold, for which sets of orbits $S$, is there a compatible toric K\"ahler metric whose set of minimal Lagrangian orbits is $S$?
\end{abstract}

\maketitle

\tableofcontents

\section{Introduction}

\subsection{Context}
A toric manifold is a symplectic manifold $(M^{2n}, \omega)$ endowed with an effective, Hamiltonian action of a torus of maximal dimension $\bbT^n$. The orbit space for such an action can be identified with a subset $P$ of Euclidean space via a moment map 
$$\mu : M \to P\subset\bbR^n\simeq (\text{Lie}(\bbT^n))^*.$$ 
If $\mu$ is proper, then $P$ is a convex polytope known as the moment polytope. The interior fibres of $\mu$ are Lagrangian $n$-tori. Toric manifolds admit an abundance of K\"ahler metrics which are invariant under the torus action. We refer to the toric manifold endowed with such a K\"ahler metric $g$ as a toric {\emph {K\"ahler}} manifold and denote it by $(M,\omega,g,\mu)$. 

On a Riemannian manifold, a submanifold is said to be minimal if its mean curvature vanishes identically, which is the condition that the submanifold be critical for the area functional. Minimal submanifolds of Riemannian manifolds carry a lot of geometric information and there has been great effort spent in both finding examples and studying properties of such objects. In dimension one, these minimal submanifolds are geodesics and play a crucial role in Riemannian Geometry. Even for familiar manifolds such as the round sphere or complex projective space with the Fubini-Study metric, we still do not have a complete understanding of higher dimensional minimal submanifolds despite of the large amount of work that has been put into the subject. 
K\"ahler manifolds carry a compatible integrable complex structure and their complex submanifolds are minimal. However, as K\"ahler manifolds are also symplectic, another natural class of submanifolds to consider is that of Lagrangians. The problem of minimising area in Hamiltonian isotopy classes of Lagrangians has played an important role in both Riemannian and Symplectic Geometries as well as in Mathematical Physics \cite{Schoen,Thomas,Thomas-Yau,Liu}.

A possible way to find examples of minimal submanifolds is by making use of ambient symmetries. In that direction, Palais' principle of symmetric criticality \cite{Palais} has been extensively used in finding examples of critical points of functionals using symmetry techniques. This principle, roughly states that symmetric configurations at which a functional is critical for all symmetric variations are actually critical in the strong sense, i.e. among all variations. This has been made rigorous in the context of minimal submanifolds by Hsiang-Lawson \cite{hl}. It has since been known that symmetry can play a simplifying role in our understanding of minimal submanifolds and this has been used in several contexts to find examples (see \cite{hl}, \cite{Pacini} and \cite{k}). In a certain sense, toric manifolds are maximally symmetric and it is therefore quite natural, in the spirit of Hsiang-Lawson, to take advantage of such symmetry to study minimal submanifolds. In \cite{Goldstein}, Goldstein studies torus invariant minimal Lagrangian tori on {\it K\"ahler-Einstein} toric manifolds and shows there is a unique minimal torus fibre. In \cite{Pacini}, Pacini recovers Goldstein's result; generalizes the method to manifolds with non-Abelian group actions and studies mean curvature flow in this context. In \cite{lr}, Legendre and Rollin study Hamiltonian-minimal submanifolds in the toric context. Our goal in this paper is, roughly speaking, to study minimal submanifolds of toric manifolds using the framework of action-angle coordinates and symplectic potentials developed in \cite{gu} and \cite{a}. Some of our results extend those in \cite{Goldstein} but we are interested in general toric K\"ahler metrics which need not be canonical in any sense. In other words, our results apply to all toric K\"ahler manifolds. 

On a toric manifold $(M^{2n}, \omega, \mu)$, the fibres of the moment map are of smaller dimension along the boundary $\partial P$ of the moment polytope $P$. Thus, for any compatible metric $g$, the $n$-dimensional area of the fibres is a non-negative function 
$$V:P \to \mathbb{R},$$ 
which vanishes along $\partial P$ and so must have a strict interior maximum. The corresponding $\bbT^n$-orbit turns out to be a minimal Lagrangian torus and other such may or not exist depending on the geometry of $(M^{2n}, \omega, g, \mu)$. This paper studies minimal submanifolds of toric K\"ahler manifolds, with a special emphasis on torus invariant minimal Lagrangians tori. The problems we will address here are related to the properties of such minimal tori appearing as the fibres of the moment map on a toric K\"ahler manifold. How many such minimal Lagrangian fibres exist? Can their stability, as critical points of the area functional, be characterised from the ambient geometry? Given a fixed toric manifold, to what extent can one prescribe the fibres of the moment map which are minimal with respect to some compatible toric K\"ahler metric?


\subsection{Summary of main results}

We will set up our problems in so-called action-angle coordinates. Our first result yields the existence of minimal Lagrangian torus on compact toric K\"ahler manifolds as well as a geometric criteria for uniqueness.

\begin{theorem}[Existence and criteria for uniqueness]\label{main}
Let $(M^{2n},\omega, g, \mu)$ be a compact toric K\"ahler manifold. 
\begin{enumerate}
\item[(a)] There is at least one Lagrangian orbit of the $\bbT^n$-action which is minimal. This corresponds to a null-homologous minimal Lagrangian $n$-torus.
\item[(b)] If the Ricci curvature of $g$ is everywhere positive, there is a unique orbit of the $\bbT^n$-action which is a minimal Lagrangian torus.
\end{enumerate}
\end{theorem}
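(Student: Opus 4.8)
The plan is to pass to action-angle coordinates $(x,\theta)\in \inte(P)\times\bbT^n$ and to encode the metric $g$ through its symplectic potential $u\colon \inte(P)\to\bbR$, so that the fibre metric is $(\Hess u)^{-1}$ and the horizontal metric is $\Hess u$. The volume of the Lagrangian fibre over $x$ is then
\begin{equation*}
V(x)=(2\pi)^n\big(\det\Hess u(x)\big)^{-1/2}.
\end{equation*}
By the principle of symmetric criticality of Palais, and since the $\bbT^n$-invariant variations of a fibre are exactly the variations through neighbouring fibres, a fibre is minimal if and only if its moment image is a critical point of $V$, equivalently a critical point of $\psi:=\log\det\Hess u$. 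For part (a) I would observe that $V$ extends continuously to the compact polytope $P$, is strictly positive on $\inte(P)$ and vanishes on $\partial P$, hence attains an interior maximum; that maximum is a critical point of $V$, so the corresponding orbit is a minimal Lagrangian $n$-torus. To see it is null-homologous, join $x$ to the relative interior of a facet $F$ by a path $\gamma$; along $\gamma$ the circle subgroup generated by the inward normal to $F$ collapses, exhibiting the fibre as the boundary $\partial(\bbT^{n-1}\times D^2)$ of the $(n+1)$-chain $\mu^{-1}(\gamma)$, so $[\,\mu^{-1}(x)\,]=0$ in $H_n(M;\bbZ)$.

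For part (b) the goal is to upgrade the existence of a critical point of $V$ to uniqueness under $\mathrm{Ric}(g)>0$. First I would record the toric expression for the Ricci form: on the complex side, with logarithmic coordinates $y$ and Kähler potential $F$ Legendre-dual to $u$, one has $\Hess_y F=(\Hess_x u)^{-1}$, hence $\det\Hess_y F=(\det\Hess_x u)^{-1}=e^{-\psi}$, so that (up to an additive constant killed by $\partial\bar\partial$)
\begin{equation*}
\rho=-i\partial\bar\partial\log\det\Hess_y F=i\partial\bar\partial\psi .
\end{equation*}
Since $\psi$ is $\bbT^n$-invariant, positivity of the Ricci form is equivalent to positivity of the real Hessian $\Hess_y\psi$, and for a Kähler metric positivity of the Ricci form is equivalent to positivity of the Ricci tensor; thus the hypothesis becomes $\Hess_y\psi>0$ everywhere.

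The remaining point is to transfer this positivity, stated in $y$, to the coordinate $x$ at which the critical equation $\nabla_x\psi=0$ lives. Differentiating through the Legendre change of variables $x=\nabla_y F$ gives
\begin{equation*}
\Hess_y\psi=(\Hess_y F)\,(\Hess_x\psi)\,(\Hess_y F)+R(\nabla_x\psi),
\end{equation*}
where the correction term $R$ is linear in the first derivatives $\nabla_x\psi$. At any critical point $p$ of $\psi$ this correction vanishes, so $\Hess_x\psi(p)$ is congruent to $\Hess_y\psi(p)>0$ and is therefore positive definite: every critical point of $\psi$ is a nondegenerate local minimum. Finally, since $V\to 0$ on $\partial P$, the function $\psi=2\log\big((2\pi)^n/V\big)$ is proper and tends to $+\infty$ at $\partial P$, so two distinct strict local minima would, by a mountain-pass argument, force an intermediate critical point of index at least one, contradicting the previous step. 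Hence $\psi$, and therefore $V$, has exactly one critical point, giving the unique minimal Lagrangian orbit. I expect the main obstacle to be pinning down the Ricci formula with the correct normalisations and verifying that the correction term $R$ is genuinely first order in $\nabla\psi$, so that the congruence—and hence the local-minimum conclusion—holds at every critical point.
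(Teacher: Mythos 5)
Your proposal is correct and follows essentially the same route as the paper: minimal orbits correspond to interior critical points of $V$, an interior maximum gives existence, and positivity of the Ricci curvature forces every critical point of $\log V$ to be a nondegenerate local maximum, whence Morse theory on the contractible interior of $P$ yields uniqueness. The only differences are that you derive the key identity (at a critical point the horizontal block of $\mathrm{Ric}$ equals $-\Hess \log V$) via complex coordinates and Legendre duality rather than by the paper's direct computation of the Ricci tensor in action-angle coordinates, and that you actually supply the null-homology argument for part (a), which the paper merely asserts.
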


\begin{remark}
	The existence part (a) in this Theorem already appears in \cite{Pacini}. As for the uniqueness criteria in (b) of the above statement: to our knowledge, the best result in this direction only considered the much more restrictive setting in which the metric is K\"ahler--Einstein (see \cite{Goldstein}).
\end{remark}

\begin{remark}
	An analogue of (b) in Theorem \ref{main} does not hold under the weaker assumption of everywhere positive scalar curvature. Indeed, in Remark \ref{rem:Any_Number_of_minimal_Lagrangian_tori} we shall give examples of positive scalar curvature toric K\"ahler metrics admitting any number of minimal torus orbits.
\end{remark}

As it turns out, the instability of minimal Lagrangian fibres may be inferred solely from the local ambient geometry. More concretely, Ricci and scalar curvatures of the ambient metric determine whether a minimal Lagrangian $\bbT^n$-orbit can be a local maximum/minimum of $V$ (the area functional restricted to the $\bbT^n$-orbits). {Note that because of symmetry the Ricci curvature is constant along a $\bbT^n$-orbit.}

\begin{theorem}[Stability criteria]\label{main_2}
	Let $(M^{2n},\omega, g,\mu)$ be a toric K\"ahler manifold. Then, the following hold true:
	\begin{enumerate}
		\item[(c)] If the Ricci curvature at a minimal Lagrangian $\bbT^n$-orbit $L$ is positive (respectively negative) then $L$ is a local maximum (respectively minimum) of the area functional restricted to the $\bbT^n$-orbits. In either case, $L$ is isolated as a minimal Lagrangian $\bbT^n$-orbit.
		\item[(d)] If the scalar curvature at a minimal Lagrangian $\bbT^n$-orbit is positive, then this orbit is not a local minimum of the area functional restricted to the $\bbT^n$-orbits and is therefore unstable. 
	\end{enumerate}
\end{theorem}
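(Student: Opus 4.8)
The plan is to work entirely in action-angle coordinates $(x,\theta)\in P\times\bbT^n$, in which the K\"ahler metric is encoded by a symplectic potential $u\colon P\to\bbR$ as $g=\sum_{i,j}u_{ij}\,dx^idx^j+\sum_{i,j}u^{ij}\,d\theta^id\theta^j$, where $u_{ij}=\partial^2u/\partial x^i\partial x^j$ and $(u^{ij})$ is the inverse matrix. The induced metric on the $\bbT^n$-orbit over $x$ is $\sum u^{ij}d\theta^id\theta^j$, so that orbit has area $V(x)=(2\pi)^n(\det\Hess u)^{-1/2}$; writing $G:=\det\Hess u>0$ and $f:=\log G$, the orbit is minimal precisely when $x$ is a critical point of $V$, equivalently of $G$, equivalently of $f$. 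Thus the whole theorem is a second-derivative statement: at a critical point $x_0$ the first-order terms drop out because $\nabla G(x_0)=0$, giving
\[
\Hess_x V(x_0)=-\tfrac12(2\pi)^nG^{-3/2}\,\Hess_x G(x_0),\qquad \Hess_x f(x_0)=\tfrac1G\,\Hess_x G(x_0).
\]
Hence the sign of $\Hess_x V$ is opposite to that of $\Hess_x G$, which agrees with that of $\Hess_x f$; both parts then reduce to reading the sign behaviour of $\Hess_x f$ off the curvature.

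The key step is to express the curvature at $x_0$ through $\Hess_x f$. On the complex side $\det(g_{i\bar j})$ is proportional to $1/G$, so the Ricci form is $\rho=i\partial\bar\partial\log G=i\partial\bar\partial f$. For a $\bbT^n$-invariant $f$ I would compute $i\partial\bar\partial f$ in the logarithmic holomorphic coordinates $w_j=y_j+i\theta_j$ with $y=\nabla u$, obtaining $\rho=\tfrac12\sum_{j,k}f_{y_jy_k}\,dy_j\wedge d\theta_k$; converting to $x$-coordinates and using $\nabla f(x_0)=0$ to kill the terms involving $\partial u^{ij}/\partial x$, then applying $\mathrm{Ric}(X,Y)=\rho(X,JY)$ with $J\partial_{\theta_q}=-\sum_r u^{qr}\partial_{x_r}$, yields at $x_0$
\[
\mathrm{Ric}(\partial_{\theta_p},\partial_{\theta_q})=\tfrac12\big(U^{-1}\,\Hess_x f\,U^{-1}\big)_{pq},\qquad U:=\Hess_x u.
\]
Since $U^{-1}$ is symmetric positive definite, this congruence preserves definiteness (Sylvester), so $\mathrm{Ric}$ restricted to the orbit directions is positive (negative) definite if and only if $\Hess_x f$ is. Combining with the first paragraph, positive Ricci forces $\Hess_x G>0$, hence $\Hess_x V<0$ and $x_0$ a nondegenerate local maximum; negative Ricci gives a nondegenerate local minimum. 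Nondegeneracy of the critical point makes $x_0$ isolated among critical points of $V$, i.e. among minimal Lagrangian orbits, proving (c). Note that full definiteness of $\mathrm{Ric}$ is used only through its restriction to the $\partial_\theta$-subspace, where the restriction of a definite form stays definite.

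For (d) I would trace the identity of the previous step. Using $J$-invariance of $\mathrm{Ric}$ (which makes the trace over the $\partial_x$ block equal to that over the $\partial_\theta$ block) together with the block-diagonal inverse metric, the scalar curvature at $x_0$ becomes
\[
S(x_0)=\mathrm{tr}\big(U^{-1}\,\Hess_x f\big)=\tfrac1G\,\mathrm{tr}\big(U^{-1}\,\Hess_x G\big).
\]
If $S(x_0)>0$ then $\mathrm{tr}(U^{-1}\Hess_x G)>0$; since $U^{-1}>0$, the trace of $U^{-1}$ against a negative semidefinite matrix would be $\le0$, so $\Hess_x G$ cannot be negative semidefinite. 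Hence there is a direction $v$ with $v^\top\Hess_x G\,v>0$, i.e. $v^\top\Hess_x V\,v<0$: the orbit is not a local minimum of $V$, and $v$ is a destabilizing symmetric variation, so $L$ is unstable (a fortiori unstable among all variations).

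The main obstacle I anticipate is the second step: computing the Ricci form of a \emph{general} toric K\"ahler metric in action-angle coordinates and correctly tracking which terms survive at a critical point. The simplification $\nabla f(x_0)=0$ is essential — away from critical points the expression for $\mathrm{Ric}(\partial_\theta,\partial_\theta)$ carries extra terms in $\partial u^{ij}/\partial x$ and $\nabla f$, and only their cancellation at $x_0$ produces the clean congruence formula $\tfrac12\,U^{-1}\Hess_x f\,U^{-1}$ on which both parts rest.
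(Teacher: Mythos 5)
Your argument is correct and follows essentially the same route as the paper: both parts reduce to the identity that, at a critical point of $V$, the Ricci tensor restricted to a Lagrangian coordinate block is congruent to $-\Hess\log V$ (your $\tfrac12 U^{-1}\Hess_x f\,U^{-1}$ on the $\partial_\theta$-block matches the paper's $-\Hess^{g_P}\log V$ on the $\partial_x$-block via $f=-2\log V$ and Sylvester), with the scalar curvature as its $g_P$-trace. The only cosmetic differences are that you derive the Ricci form from $i\partial\bar\partial\log\det$ in the Legendre-dual holomorphic coordinates rather than quoting the action-angle formula for $\rho$, and for (d) you trace the Ricci identity directly instead of invoking the rewrite of Abreu's formula as an equation $\Delta_P V=\pm sV$.
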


Recall that at a minimal submanifold $L\subset M,$ the second variation of the volume functional gives a linear operator on infinitesimal variations, i.e. normal vector fields (which in the Lagrangian case may be identified with $1$-forms on the submanifold). The number of negative eigenvalues of the second variation is called the index of the corresponding minimal submanifold and denoted $\mathrm{ind}(L)$.
We obtain an expression for the Ricci curvature which, at a critical point of $V$, identifies some of its components with a certain Hessian of the function $\log V$. From this we can read lower bounds on the index of minimal Lagrangian fibres simply from the Ricci curvature. Below we shall only state a crude version of our actual result, stated as Corollary \ref{cor:Index_Bounds}, which uses less restrictive assumptions on the Ricci curvature than the version stated here.

\begin{corollary}[Index estimate]\label{cor:Index_Bounds_Introduction}
	Let $(M^{2n},\omega,g,\mu)$ be a toric K\"ahler manifold and $L$ a minimal Lagrangian orbit around which the Ricci curvature is positive, then 
	$$\mathrm{ind}(L) \geq n.$$ 
\end{corollary}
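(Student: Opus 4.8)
The plan is to exhibit an explicit $n$-dimensional subspace of normal variations on which the second variation of the area functional is negative definite; by the very definition of the index this yields $\mathrm{ind}(L)\geq n$ at once.

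First I would record the structural input. Since $g$ is $\bbT^n$-invariant, its restriction to the orbit $L\cong\bbT^n$ is translation-invariant, hence flat, so the space of harmonic $1$-forms on $L$ is exactly $n$-dimensional and consists of the parallel forms. Under the standard identification of the normal bundle of a Lagrangian with $T^*L$ (sending a normal field $\xi$ to $\iota_\xi\omega|_{TL}$), a parallel form $\eta^\flat$ corresponds to the normal field $-J\eta$, which is precisely the infinitesimal deformation of $L$ through the neighbouring torus orbits $\mu^{-1}(b)$. Thus the harmonic $1$-forms span the candidate $n$-dimensional subspace $\mathcal{N}\subset\Gamma(NL)$ of orbit-direction variations, and these are genuinely $n$ linearly independent normal fields because $\mu$ is a submersion on the interior $P^\circ$.

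Next I would compute the second variation on $\mathcal{N}$. Because the area of $\mu^{-1}(b)$ equals $V(b)$ and $L=\mu^{-1}(b_0)$ sits at a critical point of $V$ (it is minimal), the first-order term in any variation through orbits drops out, and the restriction of the index form to $\mathcal{N}$ is simply the Hessian $\Hess V$ evaluated at $b_0$. Equivalently, feeding a parallel $1$-form $\alpha$ into the general second variation formula for minimal Lagrangians, the conditions $d\alpha=0$ and $\delta\alpha=0$ kill the Hodge-Laplacian contribution and leave the quadratic form equal to $-\int_L \mathrm{Ric}(\alpha,\alpha)\,\dvol$; this matches the expression identifying these orbit-direction components of the ambient Ricci tensor with $-\Hess(\log V)$ at the critical point. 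Finally I would invoke part (c) of Theorem \ref{main_2}: positivity of the Ricci curvature at $L$ makes $L$ a local maximum of $V$, and, more precisely, the Ricci--Hessian identification shows that $\Hess V$ is negative definite at $b_0$. Hence the index form is negative definite on the $n$-dimensional subspace $\mathcal{N}$, giving $\mathrm{ind}(L)\geq n$.

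I expect the only substantive obstacle to be the bookkeeping in the third step: one must verify that the restriction of the second variation of area to the orbit-direction fields coincides \emph{exactly} with $\Hess V$, rather than merely being comparable to it. This is precisely where the previously established expression for the Ricci curvature in terms of $\Hess(\log V)$ does the work, converting the geometric hypothesis (positive Ricci) into the analytic statement (negative definite Hessian) that controls the index; once that identification is in hand, the index bound is immediate from the definition.
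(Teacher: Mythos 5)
Your proposal is correct and follows essentially the same route as the paper: both arguments take the $n$-dimensional family of normal variations through neighbouring $\bbT^n$-orbits, identify the restriction of the index form there with $\Hess^{g_P}\log V$ (equivalently $\Hess V$) at the critical point, and then use the identity $\mathrm{Ric}|_{\mcH_x}=-(\Hess^{g_P}\log V)_x$ from Remark \ref{rem:Hessian} to convert positivity of Ricci into negative definiteness on that subspace. Your write-up is in fact more explicit than the paper's one-line proof of Corollary \ref{cor:Index_Bounds} about why the orbit-direction variations compute $\Hess V$, but the underlying idea is identical.
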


As in \cite{Pacini}, we are also able to identify the mean curvature flow for these Lagrangians with a gradient flow for a real valued function on the moment polytope $P$.
 
\begin{theorem}\label{thm:Mean_Curvature}
	Let $(M^{2n},\omega,g,\mu)$ be a toric K\"ahler manifold and $x \in \mathrm{int} P$. Then, the mean curvature flow starting at $\mu^{-1}(x)$ coincides, via $\mu$, with the negative $g_P$-gradient flow of $\log V$ starting at $x$. 
\end{theorem}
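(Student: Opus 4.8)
The plan is to reduce the flow to the moment polytope using the torus symmetry and then to read off the velocity of the base point from the first variation of area. First I would fix action--angle coordinates $(x,\theta) \in \inte P \times \bbT^n$ in which $\omega = \sum_i dx^i \wedge d\theta^i$ and the toric K\"ahler metric takes the Guillemin--Abreu form $g = \sum_{ij} G_{ij}\, dx^i dx^j + \sum_{ij} G^{ij}\, d\theta^i d\theta^j$, where $G_{ij}$ is the Hessian of the symplectic potential and $(G^{ij})$ its inverse. The fibre $L_x = \mu^{-1}(x)$ is then the torus $\{x\}\times\bbT^n$ with induced metric $\sum_{ij} G^{ij}(x)\, d\theta^i d\theta^j$, so that its area is exactly $V(x)$, and its normal bundle is spanned by the horizontal fields $\partial_{x^1},\dots,\partial_{x^n}$ because the two blocks of $g$ are orthogonal. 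In these coordinates $g_P = \sum_{ij}G_{ij}\,dx^i dx^j$ and $d\mu$ identifies $\partial_{x^k}$ with the corresponding coordinate vector on $P$.

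Second, I would argue that the mean curvature flow stays within the family of fibres. Since the initial torus $\mu^{-1}(x)$ is a single free $\bbT^n$-orbit and the mean curvature flow commutes with the ambient isometries, the evolving submanifold remains $\bbT^n$-invariant by uniqueness of the flow; being a connected $\bbT^n$-invariant $n$-dimensional submanifold it must again be a single orbit, hence a fibre $\mu^{-1}(x(t))$. The same symmetry shows that the mean curvature vector $\vec H$, which is normal and hence of the form $\vec H = \sum_k H^k \partial_{x^k}$, has coefficients $H^k$ constant along each fibre. Thus the flow reduces to an ODE $\dot x = d\mu(\vec H)$ for the base point, and it remains only to compute $\vec H$.

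Third --- the computational heart --- I would extract $\vec H$ from the first variation of area rather than from Christoffel symbols. Taking the variation $F_t(\theta) = (x+tv, \theta)$ of $L_x$, whose variation field is the normal field $\sum_i v^i \partial_{x^i}$ and whose area is $V(x+tv)$, the first variation formula (with no boundary term, as the fibres are closed) gives $dV_x(v) = -\int_{L_x}\langle \vec H, \sum_i v^i \partial_{x^i}\rangle\, dA$. Using that $\langle \vec H, \partial_{x^i}\rangle = \sum_k G_{ki}H^k$ is constant on $L_x$ and that $\int_{L_x} dA = V(x)$, this collapses to $\partial_{x^i}\log V = -\sum_k G_{ki}H^k$, i.e. $H^k = -\sum_i G^{ki}\,\partial_{x^i}\log V$. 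Hence $\vec H = -\operatorname{grad}_{g_P}(\log V)$, and projecting by $d\mu$ the mean curvature flow becomes $\dot x = -\operatorname{grad}_{g_P}(\log V)$, as claimed.

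I expect the main obstacle to be the rigorous justification that the flow preserves the fibre structure for all time --- that is, turning the symmetry heuristic into a clean statement via equivariance and uniqueness of mean curvature flow --- together with checking that the reduced flow is a genuine $g_P$-gradient flow of $\log V$ on $(P, g_P)$ with the correct signs. The first-variation computation itself is short once the orthogonal splitting and the constancy of $\vec H$ along fibres are in hand.
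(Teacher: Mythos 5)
Your proof is correct, but the computational core is genuinely different from the paper's. The paper determines $H$ head-on: it writes the trace of the second fundamental form in the $\partial_{\theta_i}$ frame, computes the Christoffel symbols $\Gamma_{\theta_i\theta_k}^{x_l}$ from the Koszul formula, and then invokes Jacobi's formula for the derivative of $\det \Hess (u)$ to recognise the result as $-\nabla \log V$. You instead extract $H$ from the first variation of area: since $H$ is $\bbT^n$-invariant and normal, $\langle H,\partial_{x^i}\rangle$ is constant on each orbit, so testing the first variation formula against the invariant normal fields $\sum_i v^i\partial_{x^i}$ (whose associated variation just translates the fibre in $P$) pins down $H$ pointwise and gives $H^k=-\sum_i G^{ki}\partial_{x^i}\log V$ with no index gymnastics. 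This is really the quantitative form of the Palais/Hsiang--Lawson symmetric-criticality principle already used for Proposition \ref{prop:Correspondence_Minimal_Critical_V}, and it is closer in spirit to Pacini's treatment; it is shorter and makes the ``gradient of the log of the orbit volume'' structure conceptually transparent. The paper's explicit computation, on the other hand, produces intermediate formulas that get reused (e.g.\ in the Maslov form remark, where $\iota_H\omega$ is written out and matched against the Ricci form). Two harmless points to tidy: the fibre area equals $V$ only up to the constant $(2\pi)^n$, which cancels after taking $\log$; and your symmetry-plus-uniqueness argument that the flow stays among fibres is fine, though once $H=-\nabla_{g_P}\log V$ is known to be horizontal and basic, the reduction to the ODE on $\inte P$ follows at once from $\mu$ being a Riemannian submersion, which is how the paper closes the argument.
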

Here the metric $g_P$ is the metric on the interior of the moment polytope induced from the toric K\"ahler metric. 

\begin{remark}
	This result was already known and contained in \cite{Pacini}. Notice that one of its consequences is that on a toric K\"ahler manifold, the mean curvature flow starting at a Lagrangian $\bbT^n$-orbit preserves the symmetry and thus the Lagrangian condition along the flow. In general, this need not be so although it is known that the mean curvature flow preserves the Lagrangian condition for K\"ahler-Einstein manifolds (see \cite{o}). 
\end{remark}

In the non-compact case, we use a result of Maccheroni ( see \cite{maccheroni}), ruling out the existence of certain holomorphic disks on manifolds with negative Ricci curvature to show the following.
\begin{theorem}\label{no_lag_nc_nr}
Let $(M^{2n},\omega,g,\mu)$ be a non-compact toric K\"ahler manifold whose moment map is proper and whose moment polytope has at least one vertex and a finite number of edges. Suppose the complex structure associated with $(\omega,g)$ is complete and the Ricci curvature of $g$ is negative. Then, there is no minimal Lagrangian orbit.
\end{theorem}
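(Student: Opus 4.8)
The plan is to reduce the statement to a purely variational fact about the single function $V \colon \mathrm{int}\,P \to \bbR$ recording the volume of the fibres, and then to rule out, by a minimax argument, the critical points that minimal Lagrangian orbits would produce. Recall from the discussion preceding Theorem \ref{thm:Mean_Curvature} that a fibre $\mu^{-1}(x)$ is a minimal Lagrangian $\bbT^n$-orbit precisely when $x$ is a critical point of $\log V$ (equivalently of $V$), since the mean curvature of the fibre is, under the identification by $\mu$, the negative $g_P$-gradient of $\log V$. Thus proving the theorem amounts to showing that, under the stated hypotheses, $V$ has no critical point in $\mathrm{int}\,P$. The curvature hypothesis enters through Theorem \ref{main_2}(c): since the Ricci curvature of $g$ is everywhere negative, \emph{every} critical point of $V$ would be a strict local \emph{minimum}, and in fact nondegenerate via the identification of the relevant Ricci components with $\Hess(\log V)$ at such a point. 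In particular $V$ could possess no interior local maximum and no saddle.

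I would then argue by contradiction. Suppose $x_0$ is a critical point, hence a strict local minimum, and set $a := V(x_0) > 0$. Near each facet of $P$ one circle factor of the torus collapses, so, exactly as in the compact case (the Guillemin boundary behaviour of the symplectic potential forces $\det \Hess u \to +\infty$), one has $V \to 0$ along the boundary facets; the hypothesis that $P$ has a vertex guarantees such facets exist. Consequently there are points $q$ with $V(q) < a$, while a small geodesic sphere $S_r(x_0)$ carries values $V \geq b$ for some $b > a$. Hence $x_0$ and $q$ lie in distinct connected components of the sublevel set $\{V < b\}$, and the minimax value $c := \inf_{\gamma} \max_t V(\gamma(t))$, taken over paths $\gamma$ joining $x_0$ to $q$ in $\mathrm{int}\,P$, satisfies $b \leq c < \infty$ and $c > \max(V(x_0), V(q))$. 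The mountain pass theorem then furnishes a critical point of $V$ at level $c > a$ which is \emph{not} a local minimum (its critical group in a positive degree is non-trivial). This contradicts Theorem \ref{main_2}(c), and the contradiction forces the non-existence of $x_0$.

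The main obstacle is the compactness input that makes this minimax argument valid, namely the Palais--Smale condition for $V$ at the positive level $c$: one must exclude that an almost-critical sequence $x_k$, with $V(x_k) \to c$ and $|\nabla V|_{g_P}(x_k) \to 0$, runs off to infinity in the non-compact polytope. Since $V \to 0$ along the finite boundary while $c > 0$, the only remaining failure mode is escape along one of the (finitely many) unbounded edges of $P$. This is precisely where I expect the analytic difficulty to concentrate, and where completeness and negative Ricci curvature are used: an escaping almost-critical sequence yields a sequence of almost-minimal Lagrangian tori of bounded area marching to infinity, and extracting a limit -- exploiting the $\bbT^n$-symmetry and the cone-like model provided by the finitely many edges meeting at a vertex -- produces a non-constant holomorphic disk. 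Maccheroni's theorem \cite{maccheroni} rules out exactly such disks on a complete Kähler manifold of negative Ricci curvature. This excludes escape to infinity, secures the Palais--Smale condition at level $c$, and thereby completes the contradiction, so that no minimal Lagrangian orbit can exist.
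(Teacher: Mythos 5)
Your reduction to critical points of $V$ and the observation that negative Ricci curvature forces every critical point to be a nondegenerate local minimum (via $\mathrm{Ric}(\partial_{x_i},\partial_{x_j})=-\Hess^{g_P}_{ij}\log V$ at critical points) are both correct, and the mountain-pass geometry you set up is genuinely there. But the proof has a real gap exactly where you flag it: the Palais--Smale condition at the minimax level $c>0$. The paper explicitly considers and abandons this Morse-theoretic route for precisely this reason --- without fixing an asymptotic model for the metric along the unbounded ends of $P$, one cannot control the level sets of $\log V$ at infinity, and nothing in the hypotheses (completeness of $J$, negative Ricci) gives you that control. Your proposed repair --- that an escaping almost-critical sequence of ``almost-minimal'' tori of bounded area can be passed to a limit which then bounds a non-constant holomorphic disk, contradicting Maccheroni --- is not an argument. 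There is no compactness with which to extract such a limit in a non-compact $M$ (the tori march off to infinity, so any limit object would not live in $M$), and Maccheroni's theorem applies to an honest minimal Lagrangian bounding an honest holomorphic disk, not to Palais--Smale sequences. As written, the step ``excludes escape to infinity'' is asserted, not proved.

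The paper's actual argument bypasses the variational framework entirely and uses Maccheroni's theorem directly and pointwise. Via the Karshon--Lerman generalisation of the Delzant construction (this is where the hypotheses of a proper moment map, finitely many facets and at least one vertex enter), $M$ is realised as a K\"ahler/GIT quotient of $\bbC^d$, and completeness of $J$ guarantees an equivariant biholomorphism to the reference complex structure $J_G$, so the existence of holomorphic disks with boundary on a given fibre is independent of which complete compatible $J$ one uses. Then for \emph{every} interior fibre $L=\mu^{-1}(x)$ one writes down an explicit holomorphic disk with boundary on $L$: lift to a point $(z_1,\dots,z_d)$ with some $z_1\neq 0$, take $z\mapsto q(zz_1,z_2,\dots,z_d)$ on the unit disk, and check its boundary lies in the $\bbT^n$-orbit $L$. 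Maccheroni's theorem (a minimal Lagrangian in negative Ricci curvature bounds no holomorphic disk) then immediately shows no fibre is minimal. So the curvature hypothesis is used once, at the very end, against a concretely constructed disk --- not to establish compactness for a minimax scheme. To salvage your approach you would need to either prove Palais--Smale under genuinely weaker asymptotic assumptions or supply the missing bubbling/limit analysis; neither is routine.
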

We will explain the notion of complete complex structure in section 6.  This is not equivalent to the completeness of $g.$\\

Finally, we raise a few questions (Questions \ref{prescribe_minimal_lagrangians}--\ref{prescribe_finite}) that follow naturally from our results. These concern the problem of prescribing minimal Lagrangians arising as fibres of the moment map on a toric manifold. More, precisely we would like to characterise those subsets of the moment polytope that occur as sets of minimal fibres for some toric K\"ahler metric on a given toric manifold. After looking at a few examples we prove the following partial answer to a version of this problem.
	
\begin{theorem}
	Let $X$ be a toric K\"ahler manifold with moment polytope $P$ and $p_1,\cdots, p_k$ distinct points in the interior of $P$. Then, there is a toric K\"ahler metric on $X$ such that the fibres over $p_1,\cdots, p_k$ are minimal Lagrangians.
\end{theorem}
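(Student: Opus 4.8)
The plan is to recast the statement in terms of symplectic potentials and then solve it by a purely local, and in fact exactly linear, perturbation of the canonical metric. First I would recall the Guillemin--Abreu correspondence \cite{gu,a}: a compatible toric K\"ahler metric on $X$ is the same datum as a \emph{symplectic potential} $u$, a strictly convex function on $\inte P$ with $u-u_0\in C^\infty(\overline P)$, where $u_0$ is the canonical potential; conversely any such $u$ produces a genuine smooth toric K\"ahler metric. In action-angle coordinates the fibre $\mu^{-1}(x)$ carries the flat metric $\sum_{i,j} u^{ij}(x)\,d\theta_i\,d\theta_j$ with $u^{ij}=(\Hess u)^{-1}$, so its $n$-dimensional area is $V(x)=(2\pi)^n(\det\Hess u(x))^{-1/2}$ and hence $\nabla\log V=-\tfrac12\nabla\log\det\Hess u$. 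By the identification of minimal orbits with critical points of $V$ (cf.\ Theorem~\ref{thm:Mean_Curvature} and the discussion preceding Theorem~\ref{main}), the fibre $\mu^{-1}(p_i)$ is a minimal Lagrangian exactly when $\nabla\log\det\Hess u(p_i)=0$. The problem is thereby reduced to exhibiting one admissible potential $u$ with $\nabla\log\det\Hess u$ vanishing at each of the $k$ prescribed interior points.

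Starting from $u_0$, set $v_i:=\nabla\log\det\Hess u_0(p_i)\in\bbR^n$ and $H_i:=(\Hess u_0(p_i))^{-1}$. I would fix disjoint balls $B_r(p_i)\subset\inte P$ and cut-offs $\chi_i=\eta((x-p_i)/r)$ with $\eta\equiv1$ near $0$ and supported in the unit ball, so $\chi_i\equiv1$ near $p_i$. For vectors $w_i\in\bbR^n$ let $q_i$ be the \emph{homogeneous} cubic in $x-p_i$ with third jet $T^{(i)}_{abm}=\tfrac13\bigl(\delta_{ab}(w_i)_m+\delta_{am}(w_i)_b+\delta_{bm}(w_i)_a\bigr)$, and put $\phi=\sum_i\chi_i q_i$. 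Because each $q_i$ is homogeneous cubic, $q_i(p_i)=\nabla q_i(p_i)=\Hess q_i(p_i)=0$, so $\Hess u(p_i)=\Hess u_0(p_i)$ and, using $\partial_m\log\det A=\tr(A^{-1}\partial_m A)$, the only new contribution to the gradient at $p_i$ is
\[
\sum_{a,b} u_0^{ab}(p_i)\,T^{(i)}_{abm}=\Bigl(\tfrac13\bigl((\tr H_i)\,I+2H_i\bigr)\,w_i\Bigr)_m=:(M_iw_i)_m .
\]
Since $M_i$ is positive definite, the choice $w_i=-M_i^{-1}v_i$ solves $v_i+M_iw_i=0$ \emph{exactly}; the correction is decoupled across the $p_i$ and requires no iteration, so each $p_i$ becomes a critical point of $\log\det\Hess u$.

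The only real obstacle I anticipate is \emph{admissibility}: one must check that $u=u_0+\phi$ stays strictly convex on $\inte P$ (smoothness is automatic since $\phi\in C^\infty(\overline P)$). Here the scaling of the cut-off is what saves the construction: with the $w_i$ now fixed, the three Leibniz terms of $\Hess(\chi_iq_i)$ on $B_r(p_i)$ satisfy $\chi_i\Hess q_i=O(r)$, $\nabla\chi_i\otimes\nabla q_i=O(r^{-1})\cdot O(r^2)=O(r)$ and $q_i\Hess\chi_i=O(r^3)\cdot O(r^{-2})=O(r)$, whence $\|\Hess\phi\|_{C^0}=O(r)$. Fixing $r_0$ so the $\overline{B_{r_0}(p_i)}$ are disjoint and contained in a compact $K\subset\inte P$, we have $\Hess u_0\ge cI$ on $K$ for some $c>0$, so choosing $r\le r_0$ small enough that $\|\Hess\phi\|_{C^0}<c$ keeps $\Hess u=\Hess u_0+\Hess\phi$ positive definite on $K$, while $u=u_0$ is unaffected outside. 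Thus $u$ is an admissible symplectic potential, the corresponding toric K\"ahler metric is well defined, and its fibres over $p_1,\dots,p_k$ are minimal Lagrangians, which completes the argument.
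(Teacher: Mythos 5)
Your argument is correct, but it takes a genuinely different route from the paper. The paper prescribes the whole function $\det\Hess(u_G+f)=F$ on a subdomain $\Omega\subset\inte P$ by solving the Dirichlet problem for the Monge--Amp\`ere equation via Caffarelli--Nirenberg--Spruck \cite{cns}, where $F$ is chosen with critical points at the $p_i$, and then extends and perturbs the solution near $\partial\Omega$ to obtain an admissible potential. You instead perturb only the third-order jet of the potential at each $p_i$: since a homogeneous cubic leaves the value, gradient and Hessian at the centre untouched, the Hessian $\Hess u(p_i)$ is unchanged and the correction to $\nabla\log\det\Hess u(p_i)$ is the \emph{linear} map $w_i\mapsto \tfrac13\bigl((\tr H_i)I+2H_i\bigr)w_i$, which is invertible because $H_i>0$; convexity survives because all three Leibniz terms of $\Hess(\chi_i q_i)$ scale like $O(r)$ once the $w_i$ are fixed. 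This buys you a proof with no PDE input at all, and your gluing step is an explicit estimate where the paper's corresponding step (``by perturbing $f$ in a neighbourhood of $\partial\Omega$ we can get $v$ such that\dots'') is only sketched. What the paper's method buys in exchange is control of $\det\Hess u$ (hence of the orbital volume $V$) on an entire neighbourhood of the $p_i$ rather than only of its first jet at those points, which is the natural starting point for the harder open problem, raised in Question \ref{prescribe_finite}, of making $p_1,\dots,p_k$ the \emph{only} minimal fibres; neither argument settles that. One small point to make explicit: your perturbation $\phi$ is smooth and compactly supported in $\inte P$, so the Abreu boundary conditions for $u_0+\phi$ are inherited verbatim from $u_0$, and the resulting potential is admissible once positivity of the Hessian is checked, exactly as you do.
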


This still does not completely answer the problem of prescribing a finite set of minimal Lagrangians because we cannot guarantee that $p_1,\cdots, p_k$ are the {\it only} minimal Lagrangians for the constructed metric. 

We finish our paper with a discussion of the Hsiang-Lawson principle applied to our setting as a potential source of higher dimensional minimal submanifolds on toric K\"ahler manifolds. In that direction we limit ourselves to constructing several simple examples of higher dimensional minimal submanifolds.


\subsection{Organisation} 
This paper is organised in the following way. Section 2 sets up the framework of action-angle coordinates to look for Lagrangian torus fibres which are critical for the area functional using Hsiang-Lawson's theorem. In section 3 we find curvature formulas in action-angle-coordinates which we later use, in section 4, to give our geometric criteria for uniqueness and stability of minimal Lagrangian fibres. In section 5 we study examples of such minimal Lagrangians, in view of which, we take some baby steps towards solving the problem of ``prescribing minimal Lagrangian fibres". We make some more progress towards a general solution of this problem in section 7. In section 6 we prove and discuss Theorem \ref{no_lag_nc_nr}. We devote section 8 to studying the mean curvature Lagrangian flow in this toric context, show it is a gradient flow on the polytope and look at some examples. Finally, section 9 illustrates how one could use the Hsiang-Lawson principle in concrete cases to find torus invariant higher dimensional minimal submanifolds.

\subsection{Acknowledgements}
We would like to thank Miguel Abreu, Jason Lotay, Andr\'e Neves, Tommaso Pacini, Daniele Sepe and Renato Vianna for interesting conversations regarding this work. We are very thankful to Tommaso Pacini for having called our attention to \cite{Pacini}.

\section{Existence of minimal Lagrangian tori}

Let ${(M^{2n},\omega, g)}$ be a toric K\"ahler manifold with moment map $\mu: M \rightarrow \bbR^n$. When $\mu$ is proper, its image is called the moment polytope of the toric manifold $(M^{2n},\omega)$ and is known to be convex. It can be written in the form 
\[
P=\{x\in \bbR^n: x\ip \nu_i-\lambda_i \geq 0, \,\, i=1\cdots d\},
\]
where the $\nu_i$'s are primitive, inward pointing boundary normals to the facets of $P$ and the $\lambda_i$'s are real constants. There is an open dense subset of $M$ which is diffeomorphic to $P\times \bbT^n$ yielding action-angle coordinates $(x, \theta) \in \mathrm{int}P\times \bbR^n.$ What is more, it follows from \cite{a} that there exists a convex function $u$ on $P$ such that
$$g = u_{ij} \ dx^i  dx^j + u^{ij} \ d \theta^i d \theta^j,$$
with 
$$u_{ij}=\frac{\del^2 u}{\del x^i \del x^j},$$ 
and $u^{ij}$ the entries of the inverse of $(u_{ij})_{i,j=1}^n$. The function $u$ is called the symplectic potential of the metric $g$ and it satisfies a boundary condition depending on the polytope. To be more precise, setting
$$u_G=\tfrac{1}{2}\sum_{i=1}^d \left( l_i\log l_i-l_i\right),$$ 
where $l_i(x)=x\ip \nu_i-\lambda_i$ for $i=1,\cdots d$, then $u-u_G$ is smooth in a neighbourhood of $\bar{P}.$ The function $u_G$ is the symplectic potential of the so-called Guillemin metric on $M.$

The area (or volume) of the $\mathbb{T}^n$-orbits is proportional to the orbital volume function 
$$V=\det(\Hess(u))^{-1/2},$$ 
which may be regarded as a function on the moment polytope $P$. From Palais{'} principle of symmetric criticality \cite{Palais} its interior critical points correspond to those $x \in \inte P$ so that $\pi^{-1}(x)\cong \bbT^n$ is a minimal submanifold. In the specific setting of minimal submanifolds, this principle was reformulated and established by Hsiang-Lawson (\cite{hl}) and will play a crucial role in our results.
\begin{theorem}[Hsiang-Lawson]
Let $G$ be a compact Lie group, $(M,g)$ a Riemannian manifold with an isometric $G$-action, and $N \subset  M$ a $G$-invariant submanifold. Then $N\subset M$ is minimal if the volume of $N$ is stationary with respect to all $G$-invariant variations of $N$.
\end{theorem}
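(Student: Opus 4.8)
The plan is to prove the nontrivial implication by exhibiting a single distinguished $G$-invariant variation — the one generated by the mean curvature vector field itself — and testing the hypothesis against it. The converse (that a minimal submanifold is automatically stationary under every variation, hence in particular under the $G$-invariant ones) is immediate, so the whole content lies in upgrading criticality among symmetric variations to genuine minimality.

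First I would recall the classical first variation of volume formula: for any smooth compactly supported variation $F \colon N \times (-\varepsilon,\varepsilon) \to M$ with $N = F(\cdot,0)$ and variation field $X = \partial_t F|_{t=0}$, one has
$$\frac{d}{dt}\Big|_{t=0} \mathrm{Vol}(F(N,t)) = -\int_N \langle H, X^\perp\rangle \, dV_N,$$
where $H$ is the mean curvature vector of $N$ and $X^\perp$ is the normal component of $X$ (tangential components integrate to zero over the closed orbits). Thus minimality of $N$ is exactly the vanishing of $H$, while the hypothesis tells us only that $\int_N \langle H, X^\perp\rangle \, dV_N = 0$ for every $G$-invariant variation field $X$.

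The key structural observation is that $H$ is itself a $G$-invariant section of the normal bundle $\nu(N)$. Indeed, each $a \in G$ acts as an isometry of $(M,g)$ carrying $N$ to $N$; it therefore preserves the induced metric on $N$, the second fundamental form, and hence the mean curvature, so that $da(H_p) = H_{a\cdot p}$ for all $p \in N$. Because $H$ is equivariant and an isometry commutes with the Riemannian exponential map, the normal flow $F(p,t) = \exp_p(t\,H_p)$ defines a smooth $G$-invariant variation of $N$ — each slice $F(N,t)$ is again $G$-invariant — whose variation field is exactly $H$. This is the concrete mechanism realising Palais' symmetric criticality: the test field that detects non-minimality is itself symmetric.

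Feeding this particular variation into the first variation formula and invoking the hypothesis yields
$$0 = -\int_N \langle H, H\rangle \, dV_N = -\int_N |H|^2 \, dV_N,$$
and since the integrand is non-negative we conclude $H \equiv 0$, i.e. $N$ is minimal. I expect the only genuine subtlety to be the construction and $G$-invariance of this distinguished variation: one must check that the normal exponential flow of the equivariant field $H$ is well defined (here automatic, since the relevant submanifolds are the compact torus orbits) and that it genuinely commutes with the $G$-action, which is precisely where the isometric nature of the action is used. Everything else reduces to the standard first variation computation.
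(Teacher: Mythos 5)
The paper does not actually prove this statement: it is quoted verbatim as a known theorem of Hsiang--Lawson, with the proof delegated to the reference \cite{hl} (and to Palais' principle of symmetric criticality \cite{Palais}). So there is no in-paper argument to compare against. That said, your proposal is the standard and correct proof of this result, and it is essentially the argument one finds when specializing Palais' principle to the volume functional: the mean curvature vector $H$ is $G$-equivariant because each element of $G$ is an isometry preserving $N$ and hence preserves the second fundamental form; the normal exponential flow along $H$ is therefore a $G$-invariant variation; plugging it into the first variation formula and using the hypothesis forces $\int_N |H|^2\, dV_N=0$. Two minor points deserve care. First, the cancellation of the tangential part of the variation field uses that $N$ is closed (compact without boundary) or that the variation is compactly supported; in your distinguished variation the field is $H$ itself, which is already normal, so this is moot. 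Second, if $N$ is non-compact the variation generated by $H$ need not be admissible (compactly supported, finite volume); one then replaces $H$ by $\varphi H$ for a cutoff $\varphi$, which can be made $G$-invariant by averaging over the compact group $G$, and exhausts $N$. In the paper's application $N$ is a compact torus orbit, so neither issue arises, and your argument is complete as written.
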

In our setting, given $x\in \inte P$ the submanifold $\mu^{-1}(x)$ is obviously $\bbT^n$-invariant and all its $\bbT^n$-invariant variations lead to other interior $\mu$-fibres. It follows from the above theorem that

\begin{proposition}\label{prop:Correspondence_Minimal_Critical_V}
Let $(M^{2n},\omega,g,\mu)$ be a toric K\"ahler manifold. A Lagrangian torus fibre $\mu^{-1}(x)$ is a minimal submanifold if and only if $x \in \inte P$ is a stationary point of $V$.
\end{proposition}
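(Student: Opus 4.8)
The plan is to invoke the Hsiang--Lawson theorem stated above with $G=\bbT^n$ acting isometrically on $(M,g)$, after making precise that the symmetric variations of a fibre are exactly the variations through neighbouring fibres and that their first variation of area is governed by $dV$. The essential input, recorded in the text preceding the statement, is that the induced metric on the fibre $\mu^{-1}(x)=\{x\}\times\bbT^n$ is $u^{ij}\,d\theta^i d\theta^j$, so that its volume form is $\det(u_{ij})^{-1/2}\,d\theta^1\wedge\cdots\wedge d\theta^n$ and hence the total fibre area is $(2\pi)^n V(x)$; thus $V$ is a positive constant multiple of the fibre area, which I would record first.

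Next I would identify the $\bbT^n$-invariant variations. On $\inte M\cong \inte P\times\bbT^n$ the torus acts by translation in the $\theta$-factor, so its orbits are precisely the fibres $\mu^{-1}(x')$. A connected $n$-dimensional $\bbT^n$-invariant submanifold of the interior is then a single such orbit, whence every symmetric variation of $\mu^{-1}(x)$ takes the form $t\mapsto\mu^{-1}(x_t)$ for a smooth path $x_t\in\inte P$ with $x_0=x$, and conversely each such path yields a symmetric variation. Along it the area equals $(2\pi)^n V(x_t)$, so the first variation of area is $(2\pi)^n\,dV_x(\dot x_0)$. Consequently the area is stationary under all $\bbT^n$-invariant variations if and only if $dV_x=0$, i.e.\ $x$ is a stationary point of $V$.

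The two implications then follow at once. If $\mu^{-1}(x)$ is minimal, its area is stationary under every variation, in particular every symmetric one, forcing $dV_x=0$. Conversely, if $x$ is a stationary point of $V$ then the area is stationary under all $\bbT^n$-invariant variations, and Hsiang--Lawson upgrades this to minimality of $\mu^{-1}(x)$. The only substantive step is this reverse implication, which rests entirely on the principle of symmetric criticality; the remaining point to check is the structural claim that invariant variations stay within the family of fibres, which I expect to be routine given the product description of $\inte M$.
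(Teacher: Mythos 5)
Your proposal is correct and follows exactly the paper's route: the paper likewise observes that the fibre area is proportional to $V$, that the $\bbT^n$-invariant variations of $\mu^{-1}(x)$ are precisely deformations through neighbouring fibres, and then applies the Hsiang--Lawson principle for the reverse implication (the forward implication being immediate from the definition of minimality). You have merely made explicit the volume computation and the identification of invariant variations, which the paper leaves as remarks preceding the statement.
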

It is now trivial to prove the existence part of Theorem \ref{main}
\begin{proof}[Proof of (a) in Theorem \ref{main}]
The orbital volume function $V: P \to \mathbb{R}$ is nonnegative and vanishes precisely at the boundary of $P$. In particular, if $M$ is compact so is $P$ and then $V$ must have an interior maximum. 
\end{proof}

\section{Ricci and scalar curvature of toric K\"ahler metrics}\label{ricci}

\subsection{Ricci curvature}


In this section we compute the Ricci and scalar curvatures of the toric K\"ahler metric $g$ in action-angle coordinates. The formulas will prove useful in establishing the uniqueness part of Theorem \ref{main} as well as in finding criteria for stability of the minimal Lagrangian orbits.

\begin{proposition}[Ricci Curvature in action-angle coordinates]\label{prop:Ricci}
Let $(M^{2n},\omega,g,\mu)$ be a toric K\"ahler manifold with action-angle coordinates $(x,\theta)$ and symplectic potential $u$. We have
	\begin{align}
	\mathrm{Ric} & =  - \sum_{i,j,k,l=1}^n \left(u_{il} \ \frac{\del}{\del x^j} \left( u^{lk} \frac{\del \log V}{\del x^k}  \right) \ dx^i  \otimes dx^j  - u^{ik} \frac{\del}{\del x^k} \left( u^{jl} \frac{\del \log V}{\del x^l}  \right) \ d\theta^i \otimes d \theta^j\right) ,
	\end{align}
	where $V= (\det \Hess(u))^{-1/2}$.
\end{proposition}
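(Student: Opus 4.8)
The plan is to reduce the computation to the standard Kähler formula for the Ricci form, $\mathrm{Ric}_{i\bar{j}} = -\partial_{i}\partial_{\bar{j}}\log\det(g_{\cdot\bar{\cdot}})$, read off on the open dense orbit in holomorphic (logarithmic) coordinates, and then translate the result back into the action-angle frame $(x,\theta)$. Concretely, on the open orbit I would introduce complex coordinates $w^{j}=y^{j}+i\theta^{j}$, where $y=\partial u/\partial x$ is the Legendre-dual variable to $x$. Legendre duality between the symplectic potential $u(x)$ and its dual Kähler potential $F(y)$ gives $x=\partial F/\partial y$ and, crucially, $\mathrm{Hess}_{y}F=(\mathrm{Hess}_{x}u)^{-1}$, that is $\partial^{2}F/\partial y^{i}\partial y^{j}=u^{ij}$. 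In these coordinates the torus-invariant Kähler metric has Hermitian components proportional to $u^{ij}$, so that $\det(g_{\cdot\bar{\cdot}})$ is proportional to $\det(u^{ij})=(\det\mathrm{Hess}(u))^{-1}=V^{2}$. Hence $\log\det(g_{\cdot\bar{\cdot}})=2\log V+\mathrm{const}$, which is the key simplification feeding the curvature formula.

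Next I would apply the Kähler Ricci formula. Since $\log V$ is a function of $x$ alone (equivalently of $y$ alone), the mixed $\partial_{w}\partial_{\bar{w}}$ derivatives collapse to second derivatives in $y$, yielding $\mathrm{Ric}_{i\bar{j}}\propto -\,\partial^{2}\log V/\partial y^{i}\partial y^{j}$. Passing from the Hermitian components to the real symmetric Ricci tensor, torus-invariance and $J$-invariance of $\mathrm{Ric}$ force it to be block-diagonal in $(x,\theta)$: the mixed $dx\,d\theta$ terms vanish, and both the $\partial_{y^{i}}\partial_{y^{j}}$-block and the $\partial_{\theta^{i}}\partial_{\theta^{j}}$-block are governed by the single Hessian $\partial^{2}\log V/\partial y^{i}\partial y^{j}$. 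This is precisely why the two blocks in the statement encode the same data yet appear in different guises.

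Finally, I would convert everything from the $(y,\theta)$ frame to the $(x,\theta)$ frame using the chain-rule relations $\partial/\partial y^{j}=\sum_{l}u^{lj}\,\partial/\partial x^{l}$ on vectors and $dy^{j}=\sum_{a}u_{aj}\,dx^{a}$ on covectors. Applying the first relation twice turns $\partial^{2}\log V/\partial y^{i}\partial y^{j}$ into the iterated expression $\sum_{k,l}u^{ik}\,\partial_{x^{k}}\!\big(u^{jl}\,\partial_{x^{l}}\log V\big)$, which is exactly the coefficient of $d\theta^{i}\otimes d\theta^{j}$. For the horizontal block I additionally substitute $dy^{i}=\sum_{a}u_{ai}\,dx^{a}$; contracting the inverse-Hessian factors $u^{\bullet\bullet}$ against $u_{\bullet\bullet}$ collapses one pair of indices and leaves the prefactor $u_{il}$ in front of $\partial_{x^{j}}\!\big(u^{lk}\,\partial_{x^{k}}\log V\big)$, giving the $dx^{i}\otimes dx^{j}$ coefficient. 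Assembling the two blocks yields the stated formula.

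I expect the main obstacle to be the index bookkeeping in this last step: correctly raising and lowering with $\mathrm{Hess}(u)$ and its inverse, tracking which derivatives act on the non-constant coefficients $u^{ij}(x)$, and pinning down the overall signs and normalizations (which depend on the conventions for $J$, the Legendre transform, and the sign of $\mathrm{Ric}$). A useful internal check is geometric: at an interior maximum of $V$ the Hessian of $\log V$ is negative definite, and the resulting sign of $\mathrm{Ric}$ must come out positive there, consistent with such an orbit being a local maximum of the area functional as in Theorem \ref{main_2}.
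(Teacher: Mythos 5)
Your proposal is correct in outline, but it takes a genuinely different route from the paper. The paper starts from Lejmi's formula for the Ricci form in action-angle coordinates, $\rho=-\tfrac12\sum_{i,l,k}\tfrac{\del^2 u^{li}}{\del x^i\del x^k}\,dx^k\wedge d\theta^l$, applies $\mathrm{Ric}(\cdot,\cdot)=-\rho(J\cdot,\cdot)$ using the explicit block form of $J$, and then introduces $\log V$ by hand via the derivative-of-the-inverse identity and Jacobi's formula for $\del_k\log\det\Hess(u)$; the whole computation stays in the $(x,\theta)$ frame. You instead pass to the Legendre-dual holomorphic coordinates $w=y+i\theta$, where the standard identity $\rho=-i\del\bar\del\log\det(g_{k\bar l})$ together with $\det(g_{k\bar l})\propto\det(u^{ij})=V^2$ makes the appearance of $\log V$ immediate and exhibits both blocks as the single Hessian $\del^2\log V/\del y^i\del y^j$; the chain rule $\del/\del y^j=\sum_l u^{lj}\,\del/\del x^l$ and $dy^i=\sum_a u_{ai}\,dx^a$ then yields the stated coefficients. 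Your route is more conceptual --- it explains \emph{why} $\log V$ is (half) the Ricci potential and why the two blocks carry the same information --- at the cost of setting up the Legendre transform and the complex frame; the paper's is more self-contained within the symplectic-coordinate formalism. The two arguments converge exactly at the paper's identity \eqref{eq:Rewritting_Ricci_Coefs}, which is what your $y$-coordinate Hessian becomes after the change of variables, so all the facts you invoke are correct and the plan goes through.

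One point your final sanity check would catch, and which is worth flagging: as printed, the Proposition carries a relative minus sign between the $dx^i\otimes dx^j$ and $d\theta^i\otimes d\theta^j$ blocks, whereas the last display of the paper's own proof, the critical-point formula \eqref{eq:Ricci_Critical_Point}, and the stability statements in Theorem \ref{main_2} all require the two blocks to enter with the \emph{same} sign, i.e. $\mathrm{Ric}=-\sum u_{il}\,\del_{x^j}\bigl(u^{lk}\del_{x^k}\log V\bigr)\,dx^i\otimes dx^j-\sum u^{ik}\,\del_{x^k}\bigl(u^{jl}\del_{x^l}\log V\bigr)\,d\theta^i\otimes d\theta^j$. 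Your approach produces this (correct) version directly, since $\mathrm{Re}\,(dw^i\otimes d\bar w^j)$ contributes $dy^i\otimes dy^j+d\theta^i\otimes d\theta^j$ with a common coefficient; so when you pin down the signs, trust your consistency check against Theorem \ref{main_2} rather than the sign pattern displayed in the statement.
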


\begin{proof}
Recall from \cite{Mehdi} that in action-angle coordinates, the Ricci form $\rho$ of a toric K\"ahler metric is written as
\begin{equation}\label{eq:Ricci_form_rho}
\rho=-\frac{1}{2} \sum_{i,l,k=1}^n\frac{\del^2 u^{li}}{\del x^i \del x^k} dx^k \wedge d\theta^l.
\end{equation}
The Ricci curvature tensor may be obtained from this by the formula $\mathrm{Ric}(\cdot , \cdot)=-\rho(J \cdot , \cdot)$ where $J$ is the complex structure, which in the coordinates $(x,\theta)$ may be written as
$$J=\begin{pmatrix}
0 & - \Hess(u)^{-1} \\
\Hess(u) & 0 
\end{pmatrix}.$$
These formulae can be used together with the transpose of $J$ to conclude that
\begin{align}\nonumber
\mathrm{Ric} & = \frac{1}{2} \sum_{i,l,k,m=1}^n\frac{\del^2 u^{li}}{\del x^i \del x^k} \left( -u^{mk} d\theta^m \otimes d \theta^l - u_{ml}dx^m  \otimes dx^k \right) \\ \nonumber
& = - \frac{1}{2} \sum_{i,l,k,m=1}^n u_{ml} \frac{\del^2 u^{li}}{\del x^i \del x^k} \ dx^m  \otimes dx^k  - \frac{1}{2} u^{mk} \frac{\del^2 u^{li}}{\del x^i \del x^k} \ d\theta^m \otimes d \theta^l \\ \label{eq:Ricci_Intermediate}
& =  - \frac{1}{2} \sum_{i,l,k,m=1}^n u_{il} \frac{\del^2 u^{lk}}{\del x^k \del x^j} \ dx^i  \otimes dx^j  - \frac{1}{2} u^{ik} \frac{\del^2 u^{jl}}{\del x^l \del x^k} \ d\theta^i \otimes d \theta^j .
\end{align}
Furthermore, we now compute 
\begin{align*}
- \frac{1}{2} \sum_{k=1}^n\frac{\del^2 u^{lk}}{\del x^k \del x^j} & = - \frac{1}{2} \sum_{k=1}^n\frac{\del}{\del x^j} \left( \frac{\del u^{lk}}{\del x^k} \right) \\
& = - \frac{1}{2} \frac{\del}{\del x^j} \sum_{m,k,i=1}^n\left( \frac{\del u^{lk}}{\del x^i} u^{im} u_{mk} \right) \\
& =  \frac{1}{2} \frac{\del}{\del x^j} \sum_{m,k,i=1}^n \left( u^{lk} u^{im} \frac{\del u_{mk}}{\del x^i} \right) \\ 
& =  \frac{1}{2} \frac{\del}{\del x^j} \sum_{m,k,i=1}^n\left( u^{lk} u^{im} \frac{\del u_{im}}{\del x^k} \right) ,
\end{align*}
for all $l,j=1,\cdots n$ where we have used the identities $\sum_{k=1}^n(\del_i u^{lk}) u_{mk} +  u^{lk} (\del_k u_{im}) =0$ for all $i,k=1, \cdots n$ and the fact that the mixed partial derivatives commute. Now, we find
$$\sum_{i,m=1}^n u^{im} \frac{\del u_{im}}{\del x^k}=\tr \sum_{i,m=1}^n \left(u^{im} \frac{\del u_{mj}}{\del x^k} \right)_{ij} = \sum_{i,m=1}^n\frac{\del u_{im}}{\del x^k} \frac{\del}{\del u_{mi}} \left( \log \det H \right) = \frac{\del}{\del x^k} \log \det H ,$$
and inserting into the previous computation we find that for all $k=1,\cdots n$
\begin{align*}
- \frac{1}{2} \sum_{k=1}^n\frac{\del^2 u^{lk}}{\del x^k \del x^j} & =  \frac{1}{2} \frac{\del}{\del x^j} \sum_{k=1}^n\left( u^{lk} \frac{\del}{\del x^k} \log ( \det  \Hess(u) ) \right) \\
& =  - \frac{\del}{\del x^j} \sum_{k=1}^n\left( u^{lk} \frac{\del \log V}{\del x^k}  \right),
\end{align*}
for all $j,l=1\cdots l.$ Relabeling the indices we obtain
\begin{align}\label{eq:Rewritting_Ricci_Coefs}
- \frac{1}{2} \sum_{l=1}^n\frac{\del^2 u^{jl}}{\del x^l \del x^k} & =  - \frac{\del}{\del x^k} \sum_{l=1}^n\left( u^{jl} \frac{\del \log V}{\del x^l}  \right),
\end{align}
for all $k,j=1\cdots l.$ Thus, the Ricci form as in equation \ref{eq:Ricci_form_rho} can be rewritten as
\begin{equation}\label{eq:Ricci_form_rho_2}
\rho=- \sum_{i,l,k=1}^n \frac{\del}{\del x^k}\left( u^{jl} \frac{\del \log V}{\del x^l}  \right) dx^k \wedge d\theta^j.
\end{equation}
On the other hand, inserting \ref{eq:Rewritting_Ricci_Coefs} into equation \ref{eq:Ricci_Intermediate} we find that
\begin{align}\nonumber
\mathrm{Ric} & =  - \sum_{i,j,l,k=1}^nu_{il} \ \frac{\del}{\del x^j} \left( u^{lk} \frac{\del \log V}{\del x^k}  \right) \ dx^i  \otimes dx^j  - \sum_{i,j,l,k=1}^nu^{ik} \frac{\del}{\del x^k} \left( u^{jl} \frac{\del \log V}{\del x^l}  \right) \ d\theta^i \otimes d \theta^j .
\end{align}
\end{proof}

\begin{remark}[Hessian of $\log V$ from $\mathrm{Ric}$]\label{rem:Hessian}
	From the formula in Proposition \ref{prop:Ricci} it is clear that in the action directions, the entries of the Ricci tensor at a critical point of $V$ coincide with those of the Hessian of $\log V$.\\
	Furthermore, a toric K\"ahler metric $g = u_{ij} \ dx^i  dx^j + u^{ij} \ d \theta^i d \theta^j$ induces a metric on $\inte P$ by requiring that $\mu$ is a Riemannian submersion. This metric shall be denoted $g_P$ and written in action coordinates as $g_P = u_{ij} \ dx^i  dx^j.$ Using it to compute the Hessian of a function $f:P\to \mathbb{R}$ we find that
	\begin{align*}
	\Hess^{g_P}_{ij}f & = \frac{\partial^2 f}{\partial x^i \partial x^j} - (\nabla_{\partial_{x_i}} \partial_{x_j}) f \\
	& = \frac{\partial^2 f}{\partial x^i \partial x^j} - \frac{1}{2}u^{kl}\frac{\partial u_{li}}{\partial x^j} \frac{\partial f}{\partial x^k},
	\end{align*} 
	where we used the Koszul formula to compute $\nabla_{\partial_{x_i}} \partial_{x_j} = \tfrac{1}{2}u^{lk}\tfrac{\partial u_{il}}{\partial x^j} \tfrac{\partial }{\partial x^k}$.
	From this and the formula in Proposition \ref{prop:Ricci}, at a critical point of $V$, the action components of the Ricci tensor can be written simply in terms of the Hessian of $\log V$ with respect to the metric $g_P$, i.e. $\Hess^{g_P} \log V$. Indeed we have
	\begin{equation}\label{eq:Ricci_In_Terms_Of_Hessian}
	\mathrm{Ric}(\partial_{x_i},\partial_{x_j})= - \Hess^{g_p}_{ij} \log V .
	\end{equation}
\end{remark}

\subsection{Scalar curvature}

Here we state an auxiliary result derived from calculations in \cite{a} and \cite{a2} (see also \cite{w}).

\begin{proposition}[Abreu's formula]\label{prop:Laplacian_of_V}
	Let $(M^{2n},\omega, g, \mu)$ be a toric K\"ahler manifold and let $s:P \to \mathbb{R}$ be the scalar curvature of the K\"ahler metric seen as a function on $P$. Then $V=(\det \Hess(u))^{-1/2}$ satisfies the equation
	\begin{equation}\label{eq:Laplacian_of_V}
	\Delta_P V = sV,
	\end{equation}
	where $\Delta_P$ denotes the Laplacian of the metric $g_P=u_{ij}dx^i \otimes dx^j$ on $P$.
\end{proposition}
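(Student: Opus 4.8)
The plan is to combine two facts: Abreu's formula, which expresses the scalar curvature $s$ of the K\"ahler metric as an explicit function of the symplectic potential in action coordinates, and a direct computation of the Laplace--Beltrami operator $\Delta_P$ of $g_P$ acting on $V$. From the calculations in \cite{a} and \cite{a2}, $s$ is, up to the sign and normalisation fixed by our curvature conventions, the second ``divergence'' $\sum_{i,j}\partial_i\partial_j u^{ij}$ of the inverse Hessian; I take this as the external input. The goal is then to show that applying $\Delta_P$ to $V$ reproduces exactly this expression multiplied by $V$.

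The first concrete step is to write $\Delta_P$ out. The key observation is that the Riemannian density of $g_P=u_{ij}\,dx^i dx^j$ is $\sqrt{\det(u_{ij})}=\sqrt{\det\Hess(u)}=V^{-1}$, so that $\Delta_P f = V\sum_{i,j}\partial_i\!\left(V^{-1}u^{ij}\partial_j f\right)$ for any $f$ on $\inte P$. Taking $f=V$ and using $V^{-1}\partial_j V=\partial_j\log V$, this collapses to $\Delta_P V = V\sum_{i,j}\partial_i\!\left(u^{ij}\,\partial_j\log V\right)$, and the whole problem reduces to matching $\sum_{i,j}\partial_i(u^{ij}\partial_j\log V)$ with $\tfrac12\sum_{i,j}\partial_i\partial_j u^{ij}$.

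This last matching is, pleasantly, already available: it is exactly equation \eqref{eq:Rewritting_Ricci_Coefs} derived in the proof of Proposition \ref{prop:Ricci}, which reads $-\tfrac12\sum_{l}\partial_l\partial_k u^{jl}=-\partial_k\sum_l u^{jl}\partial_l\log V$. Setting the two free indices equal and summing over them identifies $\sum_{i,j}\partial_i\partial_j u^{ij}$ with $2V^{-1}\Delta_P V$, whence $\Delta_P V=\tfrac{V}{2}\sum_{i,j}\partial_i\partial_j u^{ij}=sV$. The underlying algebraic content is the Piola-type, divergence-free identity $\sum_i\partial_i\!\left(\det\Hess(u)\,u^{ij}\right)=0$ for the cofactor matrix of a Hessian, which holds precisely because $u_{ij}=\partial_i\partial_j u$ and mixed partials commute.

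The step I expect to require the most care is not the differential geometry but this bookkeeping: establishing the cofactor identity with the correct sign and then reconciling the resulting numerical factor with the normalisation of $s$ recorded in \cite{a} and \cite{a2}. I would pin down the constant by checking the one-dimensional case $n=1$, where $V=(u'')^{-1/2}$ and both sides can be computed by hand, and I would cross-check the outcome against Remark \ref{rem:Hessian}, in which the action-direction components of the Ricci tensor are already identified with $-\Hess^{g_P}\log V$.
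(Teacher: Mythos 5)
Your proposal is correct and follows essentially the same route as the paper's proof: take Abreu's formula as input, use equation \eqref{eq:Rewritting_Ricci_Coefs} (equivalently the divergence-free cofactor identity) to trade $\tfrac12\sum_{i,j}\partial_i\partial_j u^{ij}$ for $\sum_{i,j}\partial_i\bigl(u^{ij}\partial_j\log V\bigr)$, and recognise the divergence-form Laplacian of $g_P$ via $\sqrt{\det g_P}=V^{-1}$. The only loose end is the one you already flagged: the paper uses the convention $\Delta_P=-\mathrm{div}\,\mathrm{grad}$ (as the argument of Corollary \ref{cor:No_Compact_Negative_Scalar_Curvature} requires), and with that sign your computation lands exactly on $\Delta_P V=sV$.
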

\begin{proof}
	We may compute the scalar curvature from our formula for the Ricci curvature or from Abreu's formula (see \cite{a})
	$$s=- \frac{1}{2} \sum_{i,j=1}^n \frac{\partial^2 u^{ij}}{\partial x^i \partial x^j}=  - \sum_{i,j=1}^n\frac{\del}{\del x^i} \left( u^{ij} \frac{\del \log(V)}{\del x^j} \right).$$
	Recalling that $V=\sqrt{\det \Hess(u)^{-1}}$ we find
	\begin{eqnarray}\nonumber
	s & = &  - \sum_{i,j=1}^n \frac{\del}{\del x^i} \left( u^{ij} \frac{\del \log(V)}{\del x^j} \right) \\ \nonumber 
	& = & -  \sum_{i,j=1}^n\frac{\del}{\del x^i} \left( u^{ij} \sqrt{\det \Hess(u)} \frac{\del V}{\del x^j} \right) \\ \nonumber  \\ \nonumber 
	& = &  - \sum_{i,j=1}^n \frac{1}{V} \frac{1}{\sqrt{\det \Hess(u)}}\frac{\del}{\del x^i} \left( u^{ij} \sqrt{\det \Hess(u)} \frac{\del V}{\del x^j} \right) \\ \nonumber
	& = &  \frac{\Delta_P V}{V} 
	\end{eqnarray}
	which gives the formula in the statement.
\end{proof}

An interesting consequence of equation \ref{eq:Laplacian_of_V} above, which is simply a rewrite of Abreu's formula for the scalar curvature, and the existence result in (a) of Theorem \ref{main} is the following well known result (see \cite{w}).

\begin{corollary}\label{cor:No_Compact_Negative_Scalar_Curvature}
	There is no compact toric K\"ahler manifold with everywhere nonpositive scalar curvature.
\end{corollary}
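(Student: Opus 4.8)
The plan is to combine the pointwise identity $\Delta_P V = sV$ from Proposition~\ref{prop:Laplacian_of_V} with the existence of an interior maximum of $V$ (essentially the content of part (a) of Theorem~\ref{main}) and then invoke the strong maximum principle. I would argue by contradiction, assuming that the scalar curvature satisfies $s\le 0$ everywhere; since $s$ is $\bbT^n$-invariant it descends to a function on $P$, so this is the same as $s\le 0$ on $P$.

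First I would fix the sign of the relevant operator. One checks from the computation in the proof of Proposition~\ref{prop:Laplacian_of_V} that $\Delta_P$ there is the non-negative geometer's Laplacian, i.e. $\Delta_P = -\mathrm{div}_{g_P}\mathrm{grad}_{g_P}$. Because $V=(\det\Hess(u))^{-1/2}$ is non-negative—indeed strictly positive on $\inte P$, where $\Hess(u)$ is positive definite, and vanishing along $\partial P$—the identity of Proposition~\ref{prop:Laplacian_of_V} then reads $\mathrm{div}_{g_P}\mathrm{grad}_{g_P}V = -sV\ge 0$ on $\inte P$. In other words, under the assumption $s\le 0$ the orbital volume function $V$ is subharmonic on the connected open Riemannian manifold $(\inte P, g_P)$.

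Next I would locate the maximum and apply the maximum principle. Since $M$ is compact, $P=\mu(M)$ is a compact polytope and $V$ extends continuously to $\bar P$ with $V|_{\partial P}=0$ and $V>0$ on $\inte P$; hence $\max_{\bar P}V$ is positive and is attained at an interior point $x_0\in\inte P$. Thus $V$ is a subharmonic function attaining an interior maximum, and by the strong (Hopf) maximum principle $V$ must be constant on $\inte P$. This is absurd, because that constant would equal the positive value $V(x_0)$ while $V\to 0$ on approach to $\partial P$. The contradiction shows $s>0$ somewhere, which is the assertion.

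The argument is short, as all the analytic substance is already packaged in Proposition~\ref{prop:Laplacian_of_V}; the one point demanding care is the normalisation of $\Delta_P$, since it dictates whether $V$ is sub- or super-harmonic and hence whether the maximum principle applies to the genuine interior maximum of $V$ rather than to its trivial boundary minimum. As a consistency check I would note the maximum-point inequality: at $x_0$ one has $\nabla V(x_0)=0$ and $\Hess V(x_0)\le 0$, so $\mathrm{div}_{g_P}\mathrm{grad}_{g_P}V(x_0)=\mathrm{tr}_{g_P}\Hess V(x_0)\le 0$, forcing $s(x_0)\ge 0$; the strong maximum principle is precisely what promotes this local inequality at a single point to a global contradiction. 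I would avoid the alternative of integrating $\Delta_P V=sV$ against $d\mathrm{vol}_{g_P}=V^{-1}\,dx$, since disposing of the resulting boundary term would require the precise Guillemin boundary behaviour of $u$, whereas the maximum-principle route needs only continuity of $V$ up to $\partial P$ together with its vanishing there.
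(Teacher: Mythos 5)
Your proof is correct and follows essentially the same route as the paper: both argue by contradiction from $\Delta_P V = sV$, the interior maximum of $V$ guaranteed by its positivity on $\inte P$ and vanishing on $\partial P$, and the maximum principle. Your extra care about the sign convention of $\Delta_P$ (it is indeed $-\mathrm{div}_{g_P}\mathrm{grad}_{g_P}$ in Proposition~\ref{prop:Laplacian_of_V}) and your explicit appeal to the strong maximum principle to rule out the constant case are welcome precisions that the paper's version leaves implicit.
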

\begin{proof}
	We argue by contradiction and suppose we are given such a compact toric K\"ahler manifold with $s\leq0$. Then, it follows from equation \ref{eq:Laplacian_of_V} that $\Delta_P V \leq 0$ and so $V$ cannot have any interior maximum. On the other hand, as we have seen in the proof of the existence part of Theorem \ref{main}, the function $V \geq 0$ vanishes at the boundary of $P$ and is positive in its interior where it must therefore attain a local maximum. This gives a contradiction.
\end{proof}

\section{Ricci curvature and minimal Lagrangian tori}

In the proof of the existence part of Theorem \ref{main}, we show that one can find a minimal Lagrangian torus by finding a maximum of $V$. Of course, other critical points may or not exist and it is desirable to understand whether they do. If they do, we are interested in understanding whether they are locally minimising, maximising or saddles for $V$. In this section we use the calculations from the previous section \ref{ricci} to answer these questions.

\subsection{Uniqueness}

The uniqueness part of Theorem \ref{main} follows immediately from our formula for the Ricci curvature stated in Proposition \ref{prop:Ricci}.
\begin{proof}[proof of (b) in Theorem \ref{main}]
	As before we have a correspondence between minimal Lagrangian $\bbT^n$-orbits and interior critical points of $\log (V)$. If $\mathrm{Ric} >0$ then $M$ is compact from Myers' theorem and so there is at least one such critical point by the existence part of Theorem \ref{main}. At such a critical point we have $d \log V =0.$ Inserting this in the formula for the Ricci curvature from Proposition \ref{prop:Ricci} we find that
	\begin{align}\label{eq:Ricci_Critical_Point}
	\mathrm{Ric} & =  - \sum_{i,j=1}^n \frac{\del^2 \log V}{\del x^i \del x^j}  \ dx^i  \otimes dx^j  - \sum_{i,j,k,l=1}^nu^{ik}  u^{jl} \frac{\del^2 \log V}{\del x^k \del x^l}   \ d\theta^i \otimes d \theta^j .
	\end{align}
	In particular, as $\mathrm{Ric}>0$ by assumption, all these critical points are local maxima. It follows immediately from Morse theory for either $V$ or $\log V$ that there can be only one such. 
\end{proof}

\begin{remark}\label{rem:Any_Number_of_minimal_Lagrangian_tori}
	An analogue of the uniqueness part of Theorem \ref{main} does not hold under the weaker assumption of everywhere positive scalar curvature. Indeed, consider a toric metric on $S^2$ of the form $g_h =dt^2 + h^2(t) d\theta^2$ for $t \in [0,1].$ We may design the profile function $h(t)$ so as to have as many critical points as we want. These critical points correspond to geodesic orbits of a circle action. Thus, their product with the unique closed geodesic circle orbit of the standard round sphere $(S^2,g_r)$ yields any number of $\bbT^2$-invariant minimal Lagrangian tori in $(S^2 \times S^2, g_h \oplus g_r)$. Furthermore, by shrinking $g_r$ down by $0< \lambda \ll 1$ we can make the scalar curvature of $g_h \oplus \lambda g_r$ as large as we want.\\
	This construction gives examples of positive scalar curvature toric K\"ahler metrics admitting any number of minimal torus orbits.
\end{remark}

\subsection{From Ricci curvature to stability}

We may also use the formula for the Ricci curvature in Proposition \ref{prop:Ricci} or more directly the formula in equation \ref{eq:Ricci_Critical_Point} at a critical point of $V$ to infer on the stability of a minimal orbit. The instability criterium which is stated as part (c) of Theorem \ref{main_2} follows from this analysis.

\begin{corollary}[Item (c) of Theorem \ref{main_2}]\label{cor:(a)}
	Let $(M^{2n},\omega, g, \mu)$ be a toric K\"ahler manifold and $L=\mu^{-1}(x)$ for $x \in \inte P$ a minimal Lagrangian. If $\mathrm{Ric}>0$, respectively $\mathrm{Ric}<0$, at $x$, then $L$ locally maximizes, respectively minimizes $V$ amongst Lagrangian $\bbT^n$-orbits. Furthermore, if $\mathrm{Ric}$ is non-degenerate at $x$, then $L$ is isolated as a minimal Lagrangian orbit.	
\end{corollary}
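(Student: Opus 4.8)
The plan is to reduce the entire statement to the second-order behaviour of $V$ at the critical point $x$, using the correspondence of Proposition \ref{prop:Correspondence_Minimal_Critical_V} together with the identification, at a critical point, of the Ricci components with a Hessian of $\log V$ recorded in equation \ref{eq:Ricci_Critical_Point} and Remark \ref{rem:Hessian}. First I would note that, since $V>0$ on $\inte P$, the functions $V$ and $\log V$ share the same critical points and, because $t\mapsto\log t$ is strictly increasing, the same local maxima, minima, and (non)degeneracy of their Hessians there. By Proposition \ref{prop:Correspondence_Minimal_Critical_V}, $L=\mu^{-1}(x)$ is minimal precisely when $x$ is a critical point of $\log V$, so $d\log V|_x=0$; the hypotheses on $\mathrm{Ric}$ ``at $x$'' are well posed since, as remarked in the paper, the Ricci tensor is constant along the orbit.

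At such a point the Koszul correction term in $\Hess^{g_P}\log V$ from Remark \ref{rem:Hessian} drops out, and equation \ref{eq:Ricci_Critical_Point} becomes the block-diagonal relation
\[
\mathrm{Ric}(\partial_{x_i},\partial_{x_j})= -\frac{\del^2 \log V}{\del x^i\,\del x^j}(x),
\qquad
\mathrm{Ric}(\partial_{\theta_i},\partial_{\theta_j})= -\sum_{k,l=1}^n u^{ik}u^{jl}\,\frac{\del^2 \log V}{\del x^k\,\del x^l}(x),
\]
with vanishing $dx\otimes d\theta$ cross terms. Writing $A$ for the $dx$-block $(-\del^2_{x^ix^j}\log V)$, the $d\theta$-block equals $U^{-1}AU^{-1}$, where $U^{-1}=(u^{ij})$ is symmetric positive definite; by Sylvester's law of inertia the two blocks are congruent and hence have the same signature.

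Next I would translate the curvature hypotheses into definiteness of $\Hess(\log V)(x)$. Because $\mathrm{Ric}$ is block diagonal, it is positive (respectively negative, respectively non)definite exactly when $A$ is, which in turn holds exactly when $-\Hess(\log V)(x)$ is. Thus $\mathrm{Ric}>0$ at $x$ forces $\Hess(\log V)(x)<0$, so $x$ is a strict local maximum of $\log V$ and therefore of $V$; this is precisely the assertion that $L$ locally maximises the orbital volume amongst Lagrangian $\bbT^n$-orbits. Symmetrically, $\mathrm{Ric}<0$ gives $\Hess(\log V)(x)>0$ and $x$ a strict local minimum. For the final claim, non-degeneracy of $\mathrm{Ric}$ yields, through the same congruence, that $\Hess(\log V)(x)$ is invertible, i.e. $x$ is a non-degenerate critical point of $\log V$; the inverse function theorem applied to $d\log V$ (or the Morse Lemma) shows $x$ is isolated among critical points of $\log V$, and invoking Proposition \ref{prop:Correspondence_Minimal_Critical_V} once more gives that $L$ is isolated among minimal Lagrangian $\bbT^n$-orbits.

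The only step requiring genuine care—and the one I would flag as the crux—is this passage from positivity of the full ambient Ricci tensor on $T_pM$ to negative-definiteness of the coordinate Hessian of $\log V$ on $\inte P$: one must use both that the $dx\otimes d\theta$ cross terms vanish at a critical point and that conjugation by $(u^{ij})>0$ preserves signature, so that the hypothesis on $\mathrm{Ric}$ over the whole tangent space is equivalent to the one-variable statement about $\Hess(\log V)$. Everything else is bookkeeping already supplied by the formulas of Section \ref{ricci}.
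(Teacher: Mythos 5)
Your proof is correct and follows essentially the same route as the paper's: it uses the critical-point identity $\mathrm{Ric}(\partial_{x_i},\partial_{x_j})=-\Hess^{g_P}_{ij}\log V$ from Remark \ref{rem:Hessian} to transfer definiteness of $\mathrm{Ric}$ to definiteness of the Hessian of $\log V$, and then the Morse lemma for isolation. The only difference is that you spell out the block-diagonal structure and the congruence of the $d\theta$-block with the $dx$-block (which in particular justifies that non-degeneracy of the full Ricci tensor passes to its action-direction restriction, a point the paper leaves implicit), but this is a refinement of detail rather than a different argument.
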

\begin{proof}
	We start by recalling how to write $\Hess^{g_p} \log V$ in terms of the Ricci tensor as in Remark \ref{rem:Hessian}. At a critical point of $V$
	\begin{equation}\label{eq:Ricci_In_Terms_Of_Hessian}
	\mathrm{Ric}(\partial_{x_i},\partial_{x_j})= - \Hess^{g_p}_{ij} \log V .
	\end{equation}
	Denote by $\mathrm{Ric}_x$ the Ricci tensor at a critical point $x$ of $V$, or equivalently of $\log V$. From this formula it follows immediately that if $\mathrm{Ric}_x>0$ then $\log V$ has a local maximum at $x$, while if $\mathrm{Ric}_x<0$ this is a local minimum. In fact, it follows that if $\mathrm{Ric}$ is non-degenerate so is $\Hess^{g_P} \log V$. From the Morse lemma, $x$ is isolated as a critical point of $V$.
\end{proof}

Finally we address the index estimates which we presented in the Introduction as Corollary \ref{cor:Index_Bounds_Introduction}. Here we shall actually prove a finer statement which requires some preparation. The open dense set in $M$ which is diffeomorphic to $\inte P \times \mathbb{T}^n$ via action-angle coordinates may be regarded as a (trivial) $\mathbb{T}^n$-bundle over $\inte P$. The angle coordinates equip this bundle with a connection whose horizontal space $\mcH$ is the kernel of the $1$-forms $\lbrace d \theta^i \rbrace_{i=1}^n$. Then, the restriction of $\mathrm{Ric}$, the Ricci tensor of the K\"ahler metric $g$, to the horizontal space yields a quadratic form on each horizontal space. At a point $x \in \inte P$, the dimension of the largest subspace of $\mcH_x$ where Ricci is negative definite is called the Ricci-index and denoted by $\mathrm{ind_{Ric}}(x).$


\begin{corollary}[Finer version of Corollary \ref{cor:Index_Bounds_Introduction}]\label{cor:Index_Bounds}
	Let $x \in \inte P$ and $\mu^{-1}(x)$ be a minimal Lagrangian $\mathbb{T}^n$-orbit. Then, as a minimal submanifold 
	$$\mathrm{ind}(\mu^{-1}(x)) \geq (n-\mathrm{ind_{Ric}}(x) ).$$ 
	In particular, if $\mathrm{Ric}$ is positive at $x$, then $\mathrm{ind}(\mu^{-1}(x)) \geq n$.
\end{corollary}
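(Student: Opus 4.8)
The plan is to exhibit an explicit $(n-\mathrm{ind}_{\mathrm{Ric}}(x))$-dimensional space of normal variations of $L=\mu^{-1}(x)$ on which the second variation of the area functional is negative definite. I would work with the identification of the normal bundle of the Lagrangian $L$ with $T^*L$ supplied by the K\"ahler structure: a normal vector $v$ corresponds to the $1$-form $\iota_v\omega|_{TL}$. Since $\omega=\sum_i dx^i\wedge d\theta^i$, the normal field $\partial_{x_i}$ corresponds to $d\theta^i$. The induced metric on the fibre is $\sum_{i,j}u^{ij}(x)\,d\theta^i d\theta^j$ with $u^{ij}(x)$ a constant matrix, hence flat, so the $\{d\theta^i\}$ are exactly the harmonic $1$-forms on $L$. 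Thus the $n$-dimensional space $\mcH_x=\mathrm{span}\{\partial_{x_i}\}$ of $\bbT^n$-invariant normal fields corresponds to the harmonic $1$-forms, and these are precisely the infinitesimal deformations through neighbouring torus fibres.

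Next I would compute the second variation on this space. For $c\in\bbR^n$ the path $t\mapsto\mu^{-1}(x+tc)$ is a variation of $L$ through fibres, with variation field $\xi_c=\sum_i c_i\partial_{x_i}$ and area $V(x+tc)$. Because $L$ is minimal the acceleration term drops out, so the second variation along this path is simply $\tfrac{d^2}{dt^2}\big|_{t=0}V(x+tc)$ (up to the positive constant coming from the volume of the torus factor). At the critical point $x$ of $V$ one has $d\log V=0$, so the Euclidean and $g_P$-Hessians of $\log V$ coincide there and $\tfrac{d^2}{dt^2}V(x+tc)=V(x)\,\Hess^{g_P}\log V(c,c)$. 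Invoking \eqref{eq:Ricci_In_Terms_Of_Hessian} from Remark \ref{rem:Hessian}, namely $\mathrm{Ric}(\partial_{x_i},\partial_{x_j})=-\Hess^{g_P}_{ij}\log V$, this yields the clean identity
\begin{equation*}
\delta^2 A(\xi_c,\xi_c)=-V(x)\,\mathrm{Ric}(\xi_c,\xi_c),\qquad V(x)>0.
\end{equation*}
Equivalently, one could appeal to Oh's second variation formula for minimal Lagrangians, which on harmonic $1$-forms reduces exactly to $-\int_L\mathrm{Ric}$; the toric computation above has the advantage of being self-contained within the paper.

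The conclusion is then linear algebra. The quadratic form $\xi_c\mapsto\delta^2 A(\xi_c,\xi_c)$ is negative definite precisely on the maximal subspace of $\mcH_x$ on which $\mathrm{Ric}$ is positive definite, and by the definition of the Ricci-index this subspace has dimension $n-\mathrm{ind}_{\mathrm{Ric}}(x)$. Producing a subspace of variations of this dimension on which $\delta^2 A<0$ forces $\mathrm{ind}(L)\geq n-\mathrm{ind}_{\mathrm{Ric}}(x)$; in particular, when $\mathrm{Ric}>0$ at $x$ (so $\mathrm{ind}_{\mathrm{Ric}}(x)=0$) this gives $\mathrm{ind}(L)\geq n$, recovering Corollary \ref{cor:Index_Bounds_Introduction}.

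The step requiring the most care is the second-variation computation on the invariant directions: one must verify that restricting the area functional to the $\bbT^n$-invariant variations genuinely reproduces the full second variation on those directions, so that the restricted negative-definiteness really bounds the ambient index from below, and that no spurious factor enters from the torus integration or the normalisation of $\xi_c$. The truly delicate point is the degenerate case, where $\mathrm{Ric}$ has a kernel on $\mcH_x$: there the harmonic variations in the null directions give vanishing second variation, and matching the advertised dimension $n-\mathrm{ind}_{\mathrm{Ric}}(x)$ requires some bookkeeping of the definite versus semidefinite subspaces. By contrast, the identification of invariant variations with harmonic forms and the reduction of $\delta^2 A$ to $-\mathrm{Ric}$ are routine once Remark \ref{rem:Hessian} is in hand.
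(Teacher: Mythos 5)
Your proof is correct and follows essentially the same route as the paper: the whole content is the identity $\mathrm{Ric}|_{\mcH_x}=-\Hess^{g_P}\log V$ at a critical point from Remark \ref{rem:Hessian}, combined with the observation that the second variation of area on the $\bbT^n$-invariant normal directions is exactly the Hessian of $V$, followed by linear algebra; you merely spell out the fibre-variation computation that the paper leaves implicit. The caveat you raise about the degenerate case is genuine --- when $\mathrm{Ric}$ has a kernel on $\mcH_x$ the construction only yields $n-\mathrm{ind_{Ric}}(x)-\mathrm{null}(\mathrm{Ric}|_{\mcH_x})$ strictly negative directions --- but this is a gap the paper's own one-line proof shares, and it is vacuous in the non-degenerate and positive-Ricci cases that the corollary targets.
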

\begin{proof}
	This again is an immediate consequence of the formula for Ricci tensor. Indeed from Remark \ref{rem:Hessian}, for $x \in \inte P$ a critical point of $V$ we find
	\begin{equation}\nonumber
	\mathrm{Ric}|_{H_x}= - (\Hess^{g_p} \log V)_x ,
	\end{equation}
	and given that $\mathrm{Ric}|_{H_x}$ and $\Hess^{g_p} \log V$ are quadratic forms in $n$-dimensions the result follows.
\end{proof}

\subsection{From scalar curvature to stability}

We can partially improve the results of the previous section to give a criterion for instability which only depends on the scalar curvature. This follows from the formula for the scalar curvature in Proposition \ref{prop:Laplacian_of_V} which reads
\begin{equation}\nonumber
\Delta_P V = sV,
\end{equation}
where $\Delta_P$ is the Laplacian of the metric $g_P=u_{ij}dx^i \otimes dx^j$ on $P$.
As a consequence of this equation, we deduce the following result which also proves the second part of Theorem \ref{main_2}.

\begin{corollary}[Item (d) of Theorem \ref{main_2}]
	Let $(M^{2n}, \omega , g, \mu)$ be a toric K\"ahler manifold, $s$ its scalar curvature, and $L=\mu^{-1}(x)$ for $x \in \mathrm{int P}$ a minimal Lagrangian torus. If $s > 0$ at $x$, then $L$ is unstable.
\end{corollary}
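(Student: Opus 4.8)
The plan is to reduce the instability of $L$ to the failure of $x$ to be a local minimum of the orbital volume function $V$, and to extract that failure directly from Abreu's formula. By Proposition~\ref{prop:Correspondence_Minimal_Critical_V}, $L=\mu^{-1}(x)$ being minimal means $x\in\inte P$ is a critical point of $V$, and since $x$ is interior we have $V(x)>0$. The whole argument then hinges on Proposition~\ref{prop:Laplacian_of_V}, which at $x$ reads $\Delta_P V = sV$.

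First I would unwind the Laplacian at the critical point. With the sign convention fixed by equation \ref{eq:Laplacian_of_V} (the same one underlying Corollary~\ref{cor:No_Compact_Negative_Scalar_Curvature}, where $\Delta_P V\le 0$ rules out interior maxima), one has $\Delta_P=-\mathrm{tr}_{g_P}\Hess^{g_P}$. Because $\nabla V$ vanishes at $x$, the first-order terms in $\Delta_P V$ drop out and $\Delta_P V(x) = -\mathrm{tr}_{g_P}\Hess^{g_P}V(x)$, so Abreu's formula becomes $\mathrm{tr}_{g_P}\Hess^{g_P}V(x) = -s(x)V(x)$. The hypothesis $s(x)>0$, together with $V(x)>0$, then forces $\mathrm{tr}_{g_P}\Hess^{g_P}V(x)<0$. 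A symmetric bilinear form with negative $g_P$-trace has at least one negative eigenvalue, so there is a vector $v\in T_xP$ with $\Hess^{g_P}V(v,v)<0$. In particular $x$ is not a local minimum of $V$: moving off $x$ in the direction $v$ strictly decreases $V$ to second order.

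Second I would convert this into a statement about the area functional. A path $x(t)$ with $x(0)=x$, $\dot x(0)=v$ produces the $\bbT^n$-invariant variation $\mu^{-1}(x(t))$ of $L$ through Lagrangian torus orbits, whose $n$-dimensional areas are proportional to $V(x(t))$. Since $x$ is critical the first variation vanishes, and at the critical point the second variation of area along this path equals a positive multiple of $\Hess^{g_P}V(v,v)<0$. Because $L$ is minimal, the second variation of area along any normal variation depends only on the variation field and coincides with the index form $Q$; hence, writing $X$ for the (nonzero, invariant) normal variation field induced by $v$, we obtain $Q(X,X)<0$. This exhibits a negative direction of the second variation, so $\mathrm{ind}(L)\ge 1$ and $L$ is unstable.

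The point demanding care — and the main obstacle — is the passage from symmetric to general variations. Positivity of $s$ controls only the \emph{trace} of $\Hess^{g_P}V$, not its full signature, so one cannot upgrade the conclusion to ``$x$ is a local maximum'' (that is the stronger Ricci-based statement of Corollary~\ref{cor:(a)}); a single negative direction is all one gets, but it is exactly enough for instability. To make the conclusion rigorous one must justify that the second variation of area along a $\bbT^n$-invariant family of orbits genuinely equals the index form evaluated on the associated invariant normal field — this uses minimality of $L$ to discard the higher-order acceleration terms in the second variation formula — and that an invariant field realizing $Q(X,X)<0$ legitimately lower-bounds the index. Both fit within the Hsiang--Lawson framework already in play, but the sign bookkeeping of $\Delta_P$ and the identification of the restricted second variation with $Q$ are where errors are easiest to make.
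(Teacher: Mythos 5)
Your proof is correct and takes essentially the same route as the paper: the paper likewise combines the correspondence between minimal orbits and critical points of $V$ with Abreu's formula $\Delta_P V = sV$ to get $\Delta_P V>0$ at the critical point $x$, hence $x$ is not a local minimum of $V$. You merely make explicit two things the paper leaves implicit, namely the sign convention $\Delta_P=-\mathrm{tr}_{g_P}\Hess^{g_P}$ and the second-variation bridge from ``not a local minimum of the area functional restricted to $\bbT^n$-orbits'' to instability of $L$ as a minimal submanifold; both are handled correctly.
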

\begin{proof}
	At such a critical point $x$ of $V$ where $s(x)>0$ we find
	$$\Delta_P V > 0 , $$
	at $x$, which shows that $V(x)$ cannot be a local minimum.
\end{proof}

\section{Examples of minimal Lagrangian tori}

%
%
\begin{example}[$\mathbb{C}^2$]\label{ex:C2_Minimal}
	$l_1(x)=x_1$ and $l_2(x)=x_2$, thus $v_1=(1,0)$, $v_2=(0,1)$ and
	$$\Hess(u_G)=\begin{pmatrix}
	\frac{1}{2x_1} & 0 \\
	0 & \frac{1}{2x_2}
	\end{pmatrix}.$$
	We see that $V(x,y)= 2\sqrt{x_1 x_2}$ and this has no critical point in $(\mathbb{R}^+)^2$ which is the interior of $P$. Thus, there is no minimal Lagrangian orbit of $\mathbb{T}^2$ in $\mathbb{C}^2$.
\end{example}

\begin{example}[$\mathbb{CP}^2$]\label{ex:CP2_Minimal}
	The moment polytope of $\bcp^2$ is the standard simplex. We have $l_1(x)=x_1$, $l_2(x)=x_2$, $l_3(x)=1-x_1-x_2$ thus $v_1=(1,0)$, $v_2=(0,1)$ and $v_3=(-1,-1)$ and
	$$\Hess(u_G)=\frac{1}{2}\begin{pmatrix}
	\frac{1}{x_1} + \frac{1}{1-x_1-x_2} & \frac{1}{1-x_1-x_2} \\
	\frac{1}{1-x_1-x_2} & \frac{1}{x_2} +\frac{1}{1-x_1-x_2}
	\end{pmatrix}.$$
	So 
	\begin{align*}
	4\det(\Hess(u_G)) & = \left( \frac{1}{x_1} + \frac{1}{1-x_1-x_2} \right) \left(\frac{1}{x_2} + \frac{1}{1-x_1-x_2}\right) - \left(\frac{1}{1-x_1-x_2} \right)^2 \\
	& = \frac{1}{x_1x_2} + \frac{1}{1-x_1-x_2} \frac{1}{x_1} +  \frac{1}{x_2}  \frac{1}{1-x_1-x_2}\\
	& =  \frac{1}{x_1x_2(1-x_1-x_2)},
	\end{align*}
	and $V(x_1,x_2)=2\sqrt{x_1x_2(1-x_1-x_2)}$. This has unique interior critical point which is located at 
	$$x_1=\tfrac{1}{3}=x_2,$$ 
	and the corresponding minimal torus is known as the Clifford torus.
\end{example}

As stated in the introduction we are interested in understanding what characterises sets of minimal Lagrangian fibres. For instance, we would like to understand if such sets can accumulate or are restricted in some way. More precisely, we formulate the following question.

\begin{question}\label{prescribe_minimal_lagrangians}
	Let $X$ be a toric K\"ahler manifold with moment polytope $P.$ For which subsets $S$ of $\inte P$ is there a toric K\"ahler metric on $X$ whose minimal Lagrangian fibres are those fibres over $S$?
\end{question}

In the next examples we shall analyse this question and provide some partial answers for specific toric symplectic manifolds. These will provide intuition for a more thorough analysis in the next section.

\begin{example}[Cohomogeneity-1 metrics on $\mathbb{CP}^2$]
	Set $l_1(x)=x_1$, $l_2(x)=x_2$, $l_3(x)=1-x_1-x_2$ thus $v_1=(1,0)$, $v_2=(0,1)$ and $v_3=(-1,-1).$  A  metric whose symplectic potential is of the form $u(x)=u_G(x) + \frac{1}{2}f(x_1+x_2)$ has cohomogeneity-1. We have
	$$2\Hess(u)=\begin{pmatrix}
	\frac{1}{x_1} + \frac{1}{1-x_1-x_2} + f''& \frac{1}{1-x_1-x_2}+f'' \\
	\frac{1}{1-x_1-x_2}+f'' & \frac{1}{x_2} +\frac{1}{1-x_1-x_2}+f''
	\end{pmatrix} .$$
	So 
	\begin{align*}
	4\det(\Hess(u)) & = \left(\frac{1}{x_1} + \frac{1}{1-x_1-x_2}+ f''\right)\left(\frac{1}{x_1} + \frac{1}{1-x_1-x_2}+f''\right) - \left(\frac{1}{1-x_1-x_2} +f''\right)^2 \\
	& = \frac{1 + (x_1+x_2) (1-x_1-x_2) f''}{x_1x_2(1-x_1-x_2)}.
	\end{align*}
	The function $f$ needs to be such that $\Hess(u)$ is positive definite which is equivalent to the following system of inequalities
	$$
	\begin{cases}
	&1 + t (1-t) f''(t) >0,\\
	&\frac{1}{x_1} + \frac{1}{x_2}+\frac{2}{1-x_1-x_2} +2 f''>0,
	\end{cases}
	$$
	for $0\leq t \leq 1.$ It is not hard to check that the second condition above is equivalent to 
	$$
	f''>\frac{t-2}{t(t-1)}.
	$$
	Thus, we conclude that $\Hess(u)$ is positive definite if and only if
	$$
	f''> -\frac{1}{t(t-1)}.
	$$
	Now, recall that the minimal Lagrangian fibres we are looking for are in correspondence with critical points of $\det(\Hess(u))$ and thus of $\log \left( \det \Hess(u) \right)$ which is given by
	$$ 
	\log\left(1 + (x_1+x_2) (1-x_1-x_2) f''\right)-\log (x_1)-\log(x_2)-\log(1-x_1-x_2).
	$$ 
	Taking derivatives, we see that the minimal fibres lie above points satisfying the following system of equations
	$$
	\begin{cases}
	&\frac{(x_1+x_2) (1-x_1-x_2)f^{(3)} +\left( 1-2(x_1+x_2)\right)f''}{1 + (x_1+x_2) (1-x_1-x_2) f''}-\frac{1}{x_1}+\frac{1}{1-x_1-x_2}=0\\
        &\frac{(x_1+x_2) (1-x_1-x_2)f^{(3)} +\left( 1-2(x_1+x_2)\right)f''}{1 + (x_1+x_2) (1-x_1-x_2) f''}-\frac{1}{x_2}+\frac{1}{1-x_1-x_2}=0
        \end{cases}
        $$
        The above system implies that $x_1=x_2$ and setting $t=2x_1,$
        $$
        \frac{d}{dt}\log \left(1+t (1-t) f''\right)=\frac{2}{t}-\frac{1}{1-t}=\frac{2-3t}{t(1-t)}.
        $$
In the setting of $\bcp^2$ with cohomogeneity-1 toric K\"ahler metrics Question \ref{prescribe_minimal_lagrangians} becomes elementary and it is easy to prove the following results which we gather in the form of a proposition.
 \begin{proposition}
 \begin{itemize}
 \item[(a)] The minimal Lagrangian fibres of a cohomogeneity-1 toric K\"ahler metric on $\bcp^2$ sit above the diagonal $\bigtriangleup$ in the standard simplex.
 \item[(b)] A cohomogeneity-1 toric K\"ahler metric on $\bcp^2$ which is analytic admits a finite set of minimal Lagrangian fibres.
  \item[(c)] There is a (non-analytic) cohomogeneity-1 toric K\"ahler metric on $\bcp^2$ which admits a continuum of minimal Lagrangian fibres.
 \item[(d)] Given a finite subset $\{p_1,\cdots, p_k\}\subset \inte P\cap \bigtriangleup,$ there is a cohomogeneity-1 toric K\"ahler metric on $\bcp^2$ whose set of minimal Lagrangian fibres is {\it precisely} $\{p_1,\cdots, p_k\}.$
 \end{itemize}
 \end{proposition}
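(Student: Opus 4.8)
Throughout write $t=x_1+x_2$, $r(t)=\tfrac{2}{t}-\tfrac{1}{1-t}$, and $w(t)=1+t(1-t)f''(t)$, so that the computation preceding the statement reads $4\det(\Hess(u))=\frac{w(t)}{x_1x_2(1-t)}$. The plan is to funnel all four items through a single one-variable function. First I would record two facts that follow from the formulae already in the excerpt. Admissibility: positive-definiteness of $\Hess(u)$ is equivalent to $w>0$. The determinant condition is exactly $w>0$; and, granting $\det>0$, the trace $\tfrac1{x_1}+\tfrac1{x_2}+\tfrac2{1-t}+2f''$ attains its slice-minimum at $x_1=x_2=t/2$, where it equals $\tfrac4t+\tfrac2{1-t}+2f''$ and rearranges to $w>t-1$, which is automatic since $w>0>t-1$. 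Moreover $u-u_G=\tfrac12 f$ is smooth up to $\bar{P}$ exactly when $f\in C^\infty([0,1])$, i.e. when $f''=\frac{w-1}{t(1-t)}$ is smooth, which forces $w(0)=w(1)=1$. Thus the admissible data are precisely the smooth functions $w$ on $[0,1]$ with $w>0$ and $w(0)=w(1)=1$. Setting $\Phi(t)=\frac{w(t)}{t^2(1-t)}$, one checks that on the diagonal $\bigtriangleup=\{x_1=x_2\}$ the volume is $V=\Phi^{-1/2}$, and $(\log\Phi)'=\eta-r$ where $\eta:=(\log w)'$. Finally, for (a), subtracting the two critical-point equations displayed before the statement cancels the terms depending only on $t$ and leaves $\tfrac1{x_1}=\tfrac1{x_2}$, so every interior critical point of $V$ lies on $\bigtriangleup$. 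Hence in all remaining items the minimal fibres are the points of $\bigtriangleup$ at which $\eta(t)=r(t)$.

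For (b), if the metric is analytic then $w$, hence $\eta=w'/w$, is real-analytic on $(0,1)$, so $\eta-r$ is real-analytic there. Since $r\to+\infty$ as $t\to0^+$ and $r\to-\infty$ as $t\to1^-$ while $\eta$ stays bounded (because $w\to1$ and $f''$ is bounded), $\eta-r$ tends to $\mp\infty$ at the two endpoints. It is therefore not identically zero, and its zeros are confined to a compact subinterval of $(0,1)$, on which a nonzero analytic function has only finitely many zeros. By (a) these are all the minimal fibres, giving finiteness.

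For (c), I would solve $\eta\equiv r$ on a subinterval $[a,b]\subset(0,1)$—there $r$ is bounded, so this is realizable by a bounded $\eta$—forcing $(\log\Phi)'\equiv0$ and hence a \emph{continuum} of critical points over the segment $\{x_1=x_2=t/2:t\in[a,b]\}$. Then extend $\eta$ to a smooth bounded function on $[0,1]$ with $\int_0^1\eta=0$, so that $w=\exp\!\big(\int_0^t\eta\big)$ is admissible. This extension cannot be analytic: an analytic function agreeing with $r$ on $[a,b]$ would, by the identity theorem, equal $r$ on all of $(0,1)$, contradicting the boundedness of $\eta$. This yields the required non-analytic metric and, pleasingly, explains why (b) must fail without analyticity.

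For (d), write $p_i=(t_i/2,t_i/2)$ with $t_1<\dots<t_k$. I would directly construct a smooth bounded $\eta$ on $[0,1]$ meeting $r$ exactly at $\{t_i\}$ and with $\int_0^1\eta=0$. As $r$ is strictly decreasing from $+\infty$ to $-\infty$ and $\eta$ is bounded, the two graphs automatically meet only in the interior, so the task is just to engineer the finitely many interior crossings: near each $t_i$ prescribe $\eta=r+(t-t_i)\zeta_i$ with a nonvanishing $\zeta_i$ (a transversal crossing), and between consecutive $t_i$ keep $\eta-r$ of a definite, alternating sign, negative near $0$ and positive near $1$. When $k$ is even the alternation clashes with the required end signs; I would fix this by making one crossing tangential (an even-order zero of $\eta-r$, still a point of the prescribed set). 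Finally, a bump supported in $(t_k,1)$ keeping $\eta-r>0$ raises $\int_0^1\eta$ without bound, and one in $(0,t_1)$ keeping $\eta-r<0$ lowers it without bound, so an intermediate-value argument achieves $\int_0^1\eta=0$ without creating new crossings; by (a) the critical set is then exactly $\{p_i\}$. The main obstacle is precisely this global step: realizing \emph{exactly} the prescribed zero set of $\eta-r$—no spurious crossings—while simultaneously honouring the normalization $w(0)=w(1)=1$ (equivalently $\int_0^1\eta=0$) and keeping $\eta$ bounded; the parity of the sign changes and the integral adjustment are the delicate points, handled by the tangency and localized-bump devices. By contrast, once the reduction to $w>0$, $w(0)=w(1)=1$ is in place, positivity of the metric costs nothing.
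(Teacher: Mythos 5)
Your proof is correct and takes essentially the same route as the paper's: item (a) by subtracting the two critical-point equations, and items (b)--(d) by reducing to the one-variable function the paper calls $h(t)=\frac{d}{dt}\log\left(1+t(1-t)f''\right)-\frac{2-3t}{t(1-t)}$ (your $\eta-r$), with the same analyticity-plus-endpoint-blow-up argument for (b), the same ODE-on-a-subinterval construction for (c), and the same prescription of the zero set of $\gamma-r$ subject to $\int_0^1\gamma=0$ for (d). Your additional care in (d) — the parity of sign changes forced by $\eta-r$ being negative near $0$ and positive near $1$, and the bump/intermediate-value adjustment of the integral without creating spurious zeros — supplies details that the paper leaves implicit when it simply asserts the existence of such a $\gamma$.
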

 \begin{proof} 
 \begin{itemize}
 \item[(a)] We have already proved this first item in the discussion preceding the statement of the proposition by noticing that 
 $$
	\begin{cases}
	&\frac{(x_1+x_2) (1-x_1-x_2)f^{(3)} +\left( 1-2(x_1+x_2)\right)f''}{1 + (x_1+x_2) (1-x_1-x_2) f''}-\frac{1}{x_1}+\frac{1}{1-x_1-x_2}=0\\
        &\frac{(x_1+x_2) (1-x_1-x_2)f^{(3)} +\left( 1-2(x_1+x_2)\right)f''}{1 + (x_1+x_2) (1-x_1-x_2) f''}-\frac{1}{x_2}+\frac{1}{1-x_1-x_2}=0
        \end{cases}
        $$
        implies $x_1=x_2.$
\item[(b)] Consider a cohomogeneity-1 toric K\"ahler metric on $\bcp^2$ with symplectic potential given by $u_G+\frac{1}{2}f(x_1+x_2).$ Set
$$
h(t)= \frac{d}{dt}\log \left(1+t (1-t) f''\right)-\frac{2-3t}{t(1-t)}.
$$
Given a set of minimal Lagrangians over $\{p_1=(x^1_1,x^1_2),\cdots, p_k=(x^k_1,x^k_2)\cdots \},$ setting $t_i=2x^i_1,$ the function $h$ must vanish exactly at $t_i.$ If the set $S$ is infinite it has an accumulation point. If the accumulation point is in the interior of the simplex, the corresponding set of t's $\{2x_1: (x_1,x_2)\in S\}$ has an accumulation point in $(0,1).$ So the zero set of $h$ in $]0,1[$ has an accumulation point which, because $h$ is analytic, forces $h$ to be zero on $]0,1[.$ But this is not possible as $f$ is smooth in a neighbourhood of the simplex. If the accumulation point is at $0$ or at $1$, then there is $t_i\rightarrow 0$ or  $t_i\rightarrow 1$ such that $h(t_i)=0$. But 
$$
\frac{d}{dt}\log \left(1+t (1-t) f''\right)_{|t_i}
$$
is bounded as $f$ is smooth in the closure of the simplex and 
$$
\frac{2-3t_i}{t_i(1-t_i)}\rightarrow \infty
$$
independently of whether $t_i\rightarrow 0$ or  $t_i\rightarrow 1$, so we get a contradiction as $h(t_i)=0.$ 
\item[(c)] Let $f$ be a smooth and non-negative function such that
$$
f''(t)=\begin{cases}
\frac{e^{\int_{\frac{1}{3}}^t\frac{2-3\tau}{\tau(1-\tau)}d\tau}-1}{t(1-t)}, t\in [1/3,2/3]\\
0, t\in [0,1/3-\epsilon]\cup[2/3+\epsilon, 1]\\
\end{cases}
$$
for some small $\epsilon>0.$ Furthermore, we can choose $f''$ so that it interpolates between these behaviours while remaining greater than $-\tfrac{1}{t(t-1)}$ so it defines a cohomogeneity-1 toric K\"ahler metric on $\bcp^2$. Furthermore, from construction we find that  
$$
\frac{d}{dt}\log \left(1+t (1-t) f''\right)-\frac{2-3t}{t(1-t)}=0
$$ 
in $[1/3,2/3]$ so that all the fibres above the diagonal segment between $(\frac{1}{6},\frac{1}{6})$ and  $(\frac{1}{3},\frac{1}{3})$ are minimal Lagrangians.
\item[(d)] Let $t_i=2x^i_1$ where $x^i_1$ is the first (or second) coordinate of $p_i$ for $i=1,\cdots k.$ Construct $\gamma,$ a smooth function on $[0,1]$ such that 
$$
\gamma-\frac{2-3t}{t(1-t)}
$$
vanishes exactly at $t_i$ for $i=1\cdots, k$ where $t_i=2x^i_1$ and such that $\int_0^1 \gamma(\tau)d\tau=0.$ Setting 
$$
f''=\frac{e^{\int_0^t \gamma(\tau)d\tau}-1}{t(1-t)}.
$$
We see that $f''$ is greater than $-\frac{1}{t(t-1)}$, $f''$ is smooth at $0$ and $1$ and
$$
\left(\prod_{i=1}^3 l_i \right)\det \left(\Hess \left (u_G(x) + \frac{1}{2}f(x_1+x_2) \right)\right)
$$
is smooth over $P$. It follows from this, by results in \cite{a2}, that $u_G(x) + \frac{1}{2}f(x_1+x_2)$ is the symplectic potential of a cohomogeneity-1 toric K\"ahler metric on $\bcp^2.$ On the other hand 
$$
\frac{d}{dt}\log \left(1+t (1-t) f''\right)-\frac{2-3t}{t(1-t)}=\gamma(t)-\frac{2-3t}{t(1-t)},
$$
so it vanishes exactly at $t_i$ for $i=1\cdots, k.$ Therefore the minimal Lagrangian fibres are exactly those above $\frac{1}{2}(t_i,t_i),$ for $i=1\cdots, k.$
\end{itemize}
\end{proof}

\begin{remark} 
We would have been able to carry out similar calculations for symplectic potential of the form $u_G+\frac{1}{2}f(x)$ which have minimal Lagrangian fibres  above the segment 
$$
\left\{(x,y):y=\frac{1-x}{2}\right\}.
$$
\end{remark}  
   \end{example}

\begin{example}[General toric metrics on $\mathbb{CP}^2$]
In fact for $\bcp^2$ we are able to say a bit more about Question \ref{prescribe_minimal_lagrangians} for general toric K\"ahler metric. Namely we prove the following:
\begin{proposition}
Let $p_1, \cdots, p_k$ be $k$ distinct points in the interior of the simplex in $\bbR^2.$ There is a toric K\"ahler metric whose fibres over $p_1, \cdots, p_k$ are minimal.
\end{proposition}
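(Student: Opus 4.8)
The plan is to reduce the statement to a concrete perturbation problem for symplectic potentials on the standard simplex and then solve it by a localized third-order perturbation of the Guillemin metric. By Proposition~\ref{prop:Correspondence_Minimal_Critical_V}, a fibre $\mu^{-1}(p)$ is a minimal Lagrangian if and only if $p\in\inte P$ is a critical point of $V=(\det\Hess(u))^{-1/2}$, equivalently of $\det\Hess(u)$. So it suffices to produce a valid symplectic potential $u$ on the simplex $P$ with $\nabla\big(\det\Hess(u)\big)(p_i)=0$ for every $i$. I would look for $u$ of the form $u=u_G+\phi$ with $\phi$ smooth and compactly supported in $\inte P$. Because $\phi$ vanishes near $\partial P$, the boundary behaviour of $u$ coincides with that of $u_G$, so—exactly as in the verification in part (d) of the previous proposition, using \cite{a2}—the only thing left to check for $u$ to be a genuine symplectic potential is that $\Hess(u)=A+\Hess(\phi)$ is positive definite on $\inte P$, where $A:=\Hess(u_G)$. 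Thus the whole problem becomes: find such a $\phi$ with $\nabla\det(A+\Hess\phi)$ vanishing at each $p_i$ while keeping $A+\Hess\phi>0$.

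Next I would localize. Choose $r>0$ so small that the balls $B(p_i,r)$ are disjoint and contained in $\inte P$, and write $\phi=\sum_i\phi_i$ with $\phi_i$ supported in $B(p_i,r)$; then the condition at $p_i$ depends only on $\phi_i$. The key device is to take $\phi_i=\chi_i P_i$, where $P_i$ is a homogeneous cubic polynomial in $x-p_i$ and $\chi_i$ a cutoff equal to $1$ near $p_i$. Since $P_i$ has no quadratic part, $\Hess\phi_i(p_i)=0$, while the third derivatives $\partial_m\Hess\phi_i(p_i)$ (the components of $\nabla^3\phi_i(p_i)$) are free. Writing $B:=\Hess\phi$ with $B(p_i)=0$, a direct computation in two variables gives, at $p_i$,
\[
\partial_m\det(A+B)=\partial_m\det A+\big(A_{22}\,\partial_m\phi_{11}+A_{11}\,\partial_m\phi_{22}-2A_{12}\,\partial_m\phi_{12}\big),
\]
so the requirement $\nabla\det\Hess(u)(p_i)=0$ becomes a linear system for the four third-derivatives $(\phi_{111},\phi_{112},\phi_{122},\phi_{222})$ at $p_i$, with right-hand side $-\nabla\det A(p_i)$.

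Then I would check solvability and positivity. The coefficient matrix of this $2\times4$ system has first and last columns $(A_{22},0)$ and $(0,A_{11})$; since $A=\Hess(u_G)$ is positive definite one has $A_{11}(p_i),A_{22}(p_i)>0$, so the matrix has rank $2$ and the system is solvable for every right-hand side—at every interior point, with no genericity hypothesis. Fixing a solution determines the cubics $P_i$. Finally, with the $P_i$ (hence the third derivatives) fixed, the scaling $\Hess(\chi_iP_i)=O(r)$ shows that $\|\Hess\phi\|_{C^0}\to0$ as $r\to0$; since $A$ has a positive lower bound for its eigenvalues on the compact interior set $\bigcup_i\overline{B(p_i,r)}$, choosing $r$ small enough forces $A+\Hess\phi>0$ throughout $\inte P$ (outside the balls one simply has $\Hess(u)=A$). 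This yields a valid toric K\"ahler metric on $\bcp^2$ whose orbital volume function has a critical point at each $p_i$, proving the claim.

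I expect the main obstacle to be reconciling the critical-point condition with the positivity of $\Hess(u)$: naively one would adjust $\Hess\phi(p_i)$ itself, but the size of that adjustment is forced by $\nabla\det A(p_i)$ and may destroy positive definiteness. The device that resolves this is to carry the critical-point condition entirely at third order (keeping $\Hess\phi(p_i)=0$), so that the deciding data $\nabla^3\phi(p_i)$ is decoupled from the size of $\Hess\phi$, which can then be made uniformly small by shrinking the supports.
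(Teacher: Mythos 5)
Your proof is correct, and it takes a genuinely different route from the paper's. The paper perturbs the Guillemin potential by a \emph{separable} function $\tfrac12(f(x_1)+g(x_2))$: it normalizes $f''(x_1^i)=g''(x_2^i)=1$ at the prescribed points, observes that the critical-point equations then become linear in $f^{(3)}(x_1^i)$ and $g^{(3)}(x_2^i)$, solves for these two third derivatives at each point, and keeps positive definiteness by taking $f'',g''>0$ globally. Your argument instead uses compactly supported local perturbations $\chi_i P_i$ with $P_i$ a homogeneous cubic centred at $p_i$, so that the Hessian correction vanishes at $p_i$ while the four third derivatives are free; you then solve the same kind of $2\times 4$ linear system (your rank computation via the columns $(A_{22},0)$ and $(0,A_{11})$ is right) and restore positivity by the scaling $\Hess(\chi_iP_i)=O(r)$. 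Both proofs rest on the same mechanism — carry the critical-point condition at third order so it decouples from positivity — but your localization buys two things the paper's version does not. First, the paper's construction implicitly requires the first coordinates $x_1^i$ (and likewise the $x_2^i$) to be pairwise distinct, since $f^{(3)}$ must take a $p_i$-dependent value at $x_1^i$; if two points share a coordinate the prescriptions can clash, whereas your perturbations near distinct points are completely independent. Second, nothing in your argument uses the simplex or two-dimensionality beyond the explicit cofactor formula, which is the derivative of $\det$ in general; so your proof extends verbatim to any Delzant polytope in any dimension, giving an elementary alternative to the Monge--Amp\`ere argument the paper uses in Section~\ref{sec:Prescribing}. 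Like the paper, you correctly do not claim the $p_i$ are the \emph{only} minimal fibres.
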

Note that we do not ensure there are no other minimal fibres.
\begin{proof}
We are going to look for toric K\"ahler metrics whose symplectic potential is of the form $u=u_G+\frac{1}{2}(f(x_1)+g(x_2)).$ In this setting
$$2\Hess(u)=\begin{pmatrix}
	\frac{1}{x_1} + \frac{1}{1-x_1-x_2} + f''& \frac{1}{1-x_1-x_2}\\
	\frac{1}{1-x_1-x_2}& \frac{1}{x_2} +\frac{1}{1-x_1-x_2}+g''
	\end{pmatrix}, $$
so that 
\begin{align*}
	4\det(\Hess(u)) & = \left(\frac{1}{x_1} + \frac{1}{1-x_1-x_2}+ f''\right)\left(\frac{1}{x_2} + \frac{1}{1-x_1-x_2}+g''\right) - \left(\frac{1}{1-x_1-x_2} \right)^2 \\
		& = \frac{1 +  f''x_1(1-x_1)+g''x_2(1-x_2)+f''g''x_1x_2(1-x_1-x_2)}{x_1x_2(1-x_1-x_2)}.
\end{align*}
	Thus, for $\Hess (u)$ to be positive definite in the interior of the simplex, the functions $f$ and $g$ need to satisfy
	\begin{equation}
        \begin{cases}\label{positive_definite}
	1 +  f''x_1(1-x_1)+g''x_2(1-x_2)+f''g''x_1x_2(1-x_1-x_2)>0,\\
	\frac{1}{x_1} + \frac{1}{x_2} + \frac{2}{1-x_1-x_2}+f''+g''>0,
	\end{cases}
	\end{equation}
whenever $x_1>0$, $x_2>0$ and $1-x_1-x_2>0$. As before, the minimal Lagrangian fibres sit above the critical points of 
$$
\log \left(1 +  f''x_1(1-x_1)+g''x_2(1-x_2)+f''g''x_1x_2(1-x_1-x_2)\right)-\log x_1-\log x_2-\log(1-x_1-x_2),
$$
and must therefore satisfy
\begin{IEEEeqnarray}{l}
\frac{(1-2x_1)f''+x_1(1-x_1)f^{(3)}+x_2(1-2x_1-x_2)f''g''+x_1x_2(1-x_1-x_2)f^{(3)}g''}{1 +  x_1(1-x_1)f''+x_2(1-x_2)g''+x_1x_2(1-x_1-x_2)f''g''}\nonumber\\
=\frac{1}{x_1}-\frac{1}{1-x_1-x_2}\nonumber,
\end{IEEEeqnarray}
and
\begin{IEEEeqnarray}{l}
\frac{(1-2x_2)g''+x_2(1-x_2)g^{(3)}+x_1(1-2x_2-x_1)f''g''+x_1x_2(1-x_1-x_2)g^{(3)}f''}{1 +  x_1(1-x_1)f''+x_2(1-x_2)g''+x_1x_2(1-x_1-x_2)f''g''}\nonumber\\
=\frac{1}{x_2}-\frac{1}{1-x_1-x_2}.\nonumber
\end{IEEEeqnarray}
Write the points $p_i$ as $p_i=(x_1^i,x_2^i)$, for $i=1,\cdots, k$, and choose $f''$ and $g''$ positive so that $f''(x^i_1)=g''(x^i_2)=1$. Then, at any of the points $p_i$, the above equations become
\begin{IEEEeqnarray}{l}\label{3rd_derivatives_1}
f^{(3)}\left( x_1(1-x_1)+x_1x_2(1-x_1-x_2) \right)=\\
\left(\frac{1}{x_1}-\frac{1}{1-x_1-x_2}\right)\left(1 + x_1(1-x_1)+x_2(1-x_2)+x_1x_2(1-x_1-x_2)\right)\nonumber\\
-(1-2x_1)-x_2(1-2x_1-x_2)\nonumber
\end{IEEEeqnarray}
and
\begin{IEEEeqnarray}{l}\label{3rd_derivatives_2}
{g^{(3)}\left(x_2(1-x_2)+x_1x_2(1-x_1-x_2)\right)}= \\
\left(\frac{1}{x_2}-\frac{1}{1-x_1-x_2}\right)\left(1 +  x_1(1-x_1)+x_2(1-x_2)+x_1x_2(1-x_1-x_2)\right)\nonumber\\
-(1-2x_2)-x_1(1-2x_2-x_1),\nonumber
\end{IEEEeqnarray}
which can be solved for $f^{(3)}$ and $g^{(3)}$ at each $p^i_1$. There exist smooth functions $f''$ and $g''$ on $[0,1]$ which are positive (they will then automatically satisfy condition \ref{positive_definite}), whose values at 
$x^i_1$ and $x^i_2$ respectively are $1$, and whose derivative at $x^i_1$ and $x^i_2$ respectively are given by the values prescribed by equations \ref{3rd_derivatives_1}--\ref{3rd_derivatives_2}. Because $f$ and $g$ can be chosen to be smooth up to the boundary of $P$
$$
\left(\prod_{i=1}^3 l_i \right)\det \left(\Hess \left (u_G(x) + \frac{1}{2}\left(f(x_1))+g(x_2)\right) \right)\right)
$$
is smooth in $P$ and therefore $u_G(x) + \frac{1}{2}\left(f(x_1))+g(x_2)\right)$ is the symplectic potential of a toric K\"ahler metric whose minimal Lagrangian fibres sit above the given points.
\end{proof}
\end{example}

\begin{example}[Guillemin metric on $\bcp^2\#\overline{\bcp^2}$]\label{one_blow_up}
The moment polytope of $\bcp^2\#\overline{\bcp^2}$ is
$$
P=\{(x_1,x_2)\in \bbR^2: \ x_1\geq 0, \ x_2\geq 0, \ 1-x_1-x_2\geq 0, \ x_2\leq a\}
$$
where $a$ in any constant in $(0,1).$ We have then 
$$2\Hess(u_G)=\begin{pmatrix}
	\frac{1}{x_1} + \frac{1}{1-x_1-x_2}& \frac{1}{1-x_1-x_2}\\
	\frac{1}{1-x_1-x_2}& \frac{1}{x_2} +\frac{1}{a-x_2}+\frac{1}{1-x_1-x_2}
	\end{pmatrix}, $$
	so that
$$
4\det(\Hess(u_G))=\frac{a-(x_2)^2}{x_1x_2(a-x_2)(1-x_1-x_2)}.
$$	
By taking the logarithm and differentiating as before we see that the minimal fibres are above points $(x_1,x_2)$ satisfying
$$
\begin{cases}
	&-\frac{1}{x_1}+\frac{1}{1-x_1-x_2}=0\\
        &-\frac{2x_2}{a-(x_2)^2}-\frac{1}{x_2}+\frac{1}{a-x_2}+\frac{1}{1-x_1-x_2}=0
        \end{cases}
        $$
        This implies that $x_1=\frac{1-x_2}{2}$ and
 $$
 2y^4-ay^3-5ay^2+a(3a+2)y-a^2=0,
 $$
 where we used $y$ to denote $x_2$. By directly studying the variations of the function $y\mapsto 2y^4-ay^3-5ay^2+a(3a+2)y-a^2,$ we can see that for all values of $a$ in $(0,1)$ it has a unique zero corresponding to the unique minimal Lagrangian fibre. It would be interesting to give a more concrete combinatorial interpretation of this special point in $P.$
\end{example}

\begin{example}[Guillemin metric in real 4-dimensions]
We generalize the above example to all Guillemin metrics on toric manifolds of complex dimension $2$. Even though we have no concrete combinatorial interpretation for the minimal Lagrangians for the Guillemin metric on a toric surface so far we give a characterisation of these special fibres. Let $P$ be a Delzant polytope in $\bbR^2$
$$
P=\{(x_1,x_2)\in \bbR^2: \ (x_1,x_2)\ip \nu_i-\lambda_i\geq 0, \, \ \ i=1,\cdots, d\},
$$
where $\nu_i$ are primitive interior normals to $P$ and set $l_i(x_1,x_2)=(x_1,x_2)\ip \nu_i-\lambda_i$. The symplectic potential for the Guillemin metric is 
$$
u_G=\frac{1}{2}\sum_{i=1}^d\left(l_i\log (l_i)-l_i\right),
$$
so that 
$$
2\Hess(u_G)=\sum_{i=1}^d \frac{\nu_i \nu_i^t}{l_i}.
$$
Minimal Lagrangian fibres for the Guillemin metric are critical points for $\log(\det(\Hess(u_G)))$ and using Jacobi's formula for the derivative of the determinant of a matrix we see that
\begin{align*}
\frac{\partial\log(\det(\Hess(u_G)))}{\partial x_l}&=\tr\left(\frac{\partial \Hess(u_G)}{\partial x_l}\Hess^{-1}(u_G)\right)\\
&=\frac{1}{\det(\Hess(u_G))}\tr\left(\frac{\partial \Hess(u_G)}{\partial x_l}\Hess^{*}(u_G)\right),
\end{align*}
where $\Hess^{*}(u_G)$ is the adjugate of $\Hess(u_G).$ Now 
$$
2\frac{\partial \Hess(u_G)}{\partial x_l}=-\sum_{i=1}^d \frac{\nu_i \nu_i^t}{(l_i)^2}(\nu_i)_l,
$$
where $(\nu_i)_l$ is the $l$th component of $\nu_i.$ In dimension $2,$ adjucation is linear and 
$$
2\Hess^{*}(u_G)=\sum_{i=1}^d \frac{\left(\nu_i \nu_i^t\right)^*}{l_i},
$$
where
$$
\left(\nu_i \nu_i^t\right)^*=\begin{pmatrix}
((\nu_i)_2)^2&-(\nu_i)_1(\nu_i)_2\\
-(\nu_i)_1(\nu_i)_2&((\nu_i)_1)^2
\end{pmatrix}=\mu_i \mu_i^t,
$$
where $\mu_i = ((\nu_i)_2,-(\nu_i)_1)$ is orthogonal to $\nu_i.$ Replacing in the Jacobi formula above we see that the minimal Lagrangian fibres for the Guillemin metric are those sitting above points which satisfy
\begin{align*}
0&=\tr\left(\frac{\partial \Hess(u_G)}{\partial x_l}\Hess^{*}(u_G)\right)\\
&=-\tr\left(\sum_{i,j=1}^d \frac{\nu_i \nu_i^t\mu_j \mu_j^t}{(l_i)^2l_j}(\nu_i)_l\right)\\
&=-\sum_{i,j=1}^d \nu_i\ip\mu_j\frac{\tr(\nu_i  \mu_j^t)}{(l_i)^2l_j}(\nu_i)_l\\
&=-\sum_{i,j=1}^d \frac{(\nu_i\ip\mu_j)^2}{(l_i)^2l_j}(\nu_i)_l,
\end{align*}
for $l=1,2.$ Now $\nu_i\ip\mu_j=\det(\nu_i,\nu_j)$ so we rewrite the above as 
$$
\sum_{i,j=1}^d \frac{\det^2(\nu_i,\nu_j)}{(l_i)^2l_j}\nu_i=0.
$$

\end{example}

\begin{example}[Standard $\mathbb{CP}^1 \times \mathbb{CP}^1$]\label{ex:CP1xCP1_Minimal}
	In this case $l_1(x)=x_1$, $l_2(x)=x_2$, $l_3(x)=1-x_1$ and $l_4(x)=1-x_2$, so
	$$\Hess(u_G) = \frac{1}{2} \begin{pmatrix}
	\frac{1}{x_1} + \frac{1}{1-x_1} & 0 \\
	0 & \frac{1}{x_2} + \frac{1}{1-x_2}
	\end{pmatrix}.$$
	From this we find
	$$\det(u_G)= \frac{1}{4} \frac{1}{x_1 x_2 (1-x_1) (1-x_2)} ,$$
	so $V(x_1,x_2)=2 \sqrt{x_1 x_2 (1-x_1) (1-x_2)}$. To locate its critical points we compute 
	\begin{align*}
	\frac{\del V}{\del x_1} & = \frac{2x_2 (1-x_2) ( 1-2x_1 )}{V} \\
	\frac{\del V}{\del x_1} & = \frac{2x_1 (1-x_1) ( 1-2x_2 )}{V},
	\end{align*}
	and so there is a unique interior critical point which is 
	$$x_1=\frac{1}{2} =x_2.$$ 
	By an abuse of nomenclature we shall also call the corresponding minimal torus the Clifford torus in $\mathbb{CP}^1 \times \mathbb{CP}^1$.
\end{example}

\section{Non-existence of minimal Lagrangian fibers}

The results of the previous section show that any toric K\"ahler manifold $(M,\omega,g,\mu)$ without minimal Lagrangian orbits must be non-compact. In this section we show that, in fact, for a large class of toric K\"ahler manifolds of negative Ricci curvature there cannot be a minimal Lagrangian orbit. One might hope that a Morse theoretic argument for the function $\log V$ could yield this result. However, such arguments are hard to apply without fixing an asymptotic behavior for the metric to control the level sets of $\log V$ along the ends. Hence, we shall instead follow an alternative strategy which seeks to rule out the existence of such minimal Lagrangian tori by using a recent non-existence theorem for holomorphic disks due to Maccheroni. We will construct holomorphic disks by taking advantage of the description of toric manifolds as K\"ahler quotients  of $\mathbb{C}^d.$ Compact toric manifolds can always be understood as such but this isn't so in the non-compact case. 

In \cite{kl}, Karshon and Lerman study non-compact symplectic toric manifolds. They prove the following theorem.
\begin{theorem} \label{thm:genDel}
Let $(M,\omega)$ be a symplectic toric manifold, with proper moment map $\mu$. Then $P=\mu(M)$ is a moment polygon.

Two symplectic toric manifolds are equivariant symplectomorphic (with respect to fixed torus actions) if and only if their associated moment polygons are isomorphic. 

Moreover, every moment polygon with a finite number of facets and at least one vertex arises from some symplectic toric manifold.
\end{theorem}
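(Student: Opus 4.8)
The plan is to extend Delzant's classification of compact symplectic toric manifolds to the proper, possibly non-compact, setting, handling the three assertions in turn. The essential local tool throughout is the equivariant normal form for Hamiltonian torus actions (Marle, Guillemin--Sternberg): near any orbit an effective Hamiltonian $\bbT^n$-action on a $2n$-manifold is equivariantly symplectomorphic to a standard linear model, in which the moment map has an explicit polyhedral-cone image satisfying the Delzant smoothness condition.

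For the first assertion, I would show $P=\mu(M)$ is a moment polygon as follows. By the local normal form, the image of $\mu$ near any point is a translate of a polyhedral cone whose edges are generated by part of a $\bbZ$-basis of the integer lattice. Properness of $\mu$ forces $P$ to be closed, and the convexity theorem for proper moment maps gives that $P$ is convex with connected fibres. Patching the local cone descriptions then shows $P$ is globally a convex polyhedral set meeting the Delzant condition at each vertex.

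For the uniqueness assertion, given $M_1,M_2$ with isomorphic moment polygons I would reconstruct an equivariant symplectomorphism $M_1\to M_2$ covering the identity on $P$. Over $\inte P$ this is an isomorphism of Lagrangian $\bbT^n$-bundles, whose classification reduces to cohomological data pinned down by $P$; over a neighbourhood of each face the normal form supplies a canonical local identification. These are patched using an open cover of $P$, and the obstruction to a global patching lives in the cohomology of a sheaf on $P$, which vanishes because $P$ is contractible.

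For the existence assertion, I would build $M$ from $P$ by reversing this process: cover $P$ by the stars of its vertices, tubular neighbourhoods of its finitely many facets, and its interior; place the standard normal-form model over each piece; and glue along overlaps using the combinatorial data of the inward normals $\nu_i$. Finiteness of the facet set keeps the cover finite, while the existence of a vertex anchors the gluing and excludes pathological translation-invariant ends. The main obstacle is the patching argument common to the last two parts: in the compact case the relevant sheaf cohomology vanishes because the polytope is compact and contractible, whereas here one must control behaviour along the non-compact ends, and it is precisely properness that tames this at infinity so that the obstruction still vanishes. Making this rigorous, as in Karshon--Lerman, is the technical heart of the theorem.
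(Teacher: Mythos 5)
The paper does not prove this statement: it is quoted verbatim from Karshon--Lerman \cite{kl}, and the only methodological remark the authors make about its proof is that the existence part is obtained by (K\"ahler) reduction of $\bbC^d$, with $d$ the number of facets --- that is, by running the Delzant construction globally, not by gluing local models. So there is no in-paper argument to check your proposal against; what can be compared is your outline versus the strategy of \cite{kl} that the paper relies on. Your uniqueness step is in the spirit of Karshon--Lerman (classification of toric manifolds over a fixed orbit space by a sheaf-cohomological invariant), but your existence step genuinely differs: rather than patching normal-form charts over a cover of $P$, one exhibits $M$ directly as a symplectic quotient $\mu_K^{-1}(0)/K$ of $\bbC^d$. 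This is where the hypotheses enter cleanly --- finitely many facets gives a finite-dimensional $\bbC^d$, and the existence of a vertex guarantees that the normals $\nu_1,\dots,\nu_d$ span $\bbR^n$, so that $0\to\mathfrak{k}\to\bbR^d\to\bbR^n\to 0$ is exact and the quotient has the correct dimension. The global reduction also produces the reference complex structure $J_G$ that the paper needs in Section~6, which a chart-gluing construction would not hand you.

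Two places where your sketch is too thin to stand alone. First, ``the convexity theorem for proper moment maps'' is not Atiyah/Guillemin--Sternberg: convexity of the image and connectedness of fibres for proper moment maps on non-compact manifolds is a separate, nontrivial input that must be invoked explicitly. Second, the vanishing of the patching obstruction ``because $P$ is contractible'' is not by itself a justification, since the relevant sheaf is not locally constant; one must split it into a fine (smooth-function) part and a locally constant (lattice) part and handle each, which is precisely the technical core you defer to \cite{kl}. Neither point is a wrong turn, but as written the proposal is an annotated outline of the known proof rather than a proof.
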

To prove the last statement in the above theorem, Karshon-Lerman use a construction via K\"ahler reduction of $\bbC^d$ where $d$ is the number of facets. This construction gives a reference compatible, invariant complex structure on $M$ and thus a metric which is just as the Guillemin metric from \cite{gu} in the compact case and which we therefore denote by $J_G.$

In the compact case, it is also known that any other compatible torus invariant complex structure is equivariantly biholomorphic to $J_G$ although the biholomorphism does not in general preserve the symplectic form.  A sketch of this can be found in \cite{a2}. For non-compact manifolds the situation is more delicate as we can see from the example of $\bbR^2$. The toric manifold $\bbR^2$ carries two inequivalent holomorphic structures which are torus invariant. Namely one biholomorphic to $\bbD,$ the other to $\bbC$ corresponding to the hyperbolic and flat metrics respectively. We are interested only in those complex structures which are biholomoprhic to $J_G$. These are defined in \cite{as}.

\begin{definition} \label{def:Jcomplete}
Let $(M,\omega)$ be a symplectic toric manifold. A toric compatible torus invariant complex structure $J$ is said to be {complete} if the $J$-holomorphic vector fields 
$JY$ where  $Y$ is any Hamiltonian vector fields generating the torus action, are complete. 
\end{definition}

It is not hard to see that the sketch of the proof of existence of an equivariant biholomorphism between compatible torus invariant complex structures extends to the case of complete 
compatible torus invariant complex structures on toric manifolds whose moment map is proper. This is discussed in \cite{as} for the case of surfaces but the arguments are not dimension dependent. Complete complete compatible toric complex structures on $(M,\omega)$ are thus biholomorphic to $J_G.$

\begin{theorem}
	Let $(M,\omega,\mu)$ be a toric K\"ahler manifold whose moment map is proper and whose moment map image has a finite number of facets and at least one vertex. Suppose $M$ is endowed with a complete complex structure whose metric is of strictly negative Ricci curvature. Then, there are no minimal Lagrangian orbits of the Hamiltonian torus action.
\end{theorem}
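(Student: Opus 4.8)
The plan is to argue by contradiction. Suppose there were an interior point $x\in\inte P$ whose fibre $L=\mu^{-1}(x)\cong\bbT^n$ is a minimal Lagrangian orbit. I would then produce a non-constant $J$-holomorphic disk with boundary on $L$ and invoke Maccheroni's theorem \cite{maccheroni}, which forbids such disks whenever $L$ is a compact minimal Lagrangian and the ambient Ricci curvature is strictly negative. Since $L$ is compact and $\mathrm{Ric}(g)<0$ by hypothesis, this yields the desired contradiction. The mechanism behind Maccheroni's result is transparent in our notation: for a $J$-holomorphic disk $u$ one has $u^*\rho=\mathrm{Ric}(\partial_s u,\partial_s u)\,ds\wedge dt$ (using $\partial_t u=J\partial_s u$ and $\mathrm{Ric}(\cdot,\cdot)=-\rho(J\cdot,\cdot)$), so $\int_D u^*\rho<0$ when $\mathrm{Ric}<0$ and $u$ is non-constant, while minimality of the boundary Lagrangian forces this integral to be non-negative. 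Thus the entire content of the argument is the construction of the disk, and the completeness hypothesis on $J$ is exactly what makes this construction possible.

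First I would reduce to the Guillemin complex structure. Because $J$ is complete in the sense of Definition \ref{def:Jcomplete}, the discussion preceding the statement shows that $J$ is equivariantly biholomorphic to $J_G$, the complex structure obtained from the Karshon--Lerman realisation of $M$ as a K\"ahler reduction of $\bbC^d$ \cite{kl,as}; here the assumptions that $\mu$ is proper and that $P$ has finitely many facets and at least one vertex are what guarantee that this reduction, and hence $J_G$, exists. Let $\Phi\colon(M,J)\to(M,J_G)$ denote such a biholomorphism. Being equivariant, $\Phi$ carries the regular orbit $L$ to a regular $\bbT^n$-orbit $\Phi(L)=\mu^{-1}(x')$; being biholomorphic, it sends $J_G$-holomorphic disks to $J$-holomorphic disks. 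Note that $\Phi$ need not preserve $\omega$, but this is irrelevant: I only need $J$-holomorphicity and the fact that the boundary lands on $L$, both of which survive transport by $\Phi^{-1}$.

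Next I would build the disk in the quotient model. Fix a facet $F_i=\{l_i=0\}$ with primitive inward normal $\nu_i$; the circle $S^1_{\nu_i}\subset\bbT^n$ it generates is Hamiltonian with Hamiltonian $l_i$, so its generating vector field $Y_i$ satisfies $\iota_{Y_i}\omega=dl_i$, and since $\omega$ is $J_G$-invariant one checks that $J_G Y_i=\mathrm{grad}_{g_G}\,l_i$ is a complete $J_G$-holomorphic vector field (completeness following from Definition \ref{def:Jcomplete} since $\nu_i$ is a constant combination of the $\bbT^n$-generators). Flowing a point $p\in\Phi(L)$ along $-J_G Y_i$ decreases $l_i$ towards $0$; in the explicit $\bbC^d$-model these flow lines, together with the $S^1_{\nu_i}$-orbits, sweep out the image under the quotient map of a coordinate holomorphic disk of $\bbC^d$, i.e. a Maslov-index-two disk that caps off at a point of the divisor $D_i=\mu^{-1}(F_i)$ (where $S^1_{\nu_i}$ is the isotropy and the orbit circles collapse) with boundary the circle $S^1_{\nu_i}\cdot p\subset\Phi(L)$. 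This gives a non-constant $J_G$-holomorphic disk with boundary on $\Phi(L)$. Transporting it by $\Phi^{-1}$ yields a non-constant $J$-holomorphic disk with boundary on $L$, and Maccheroni's theorem then completes the contradiction.

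The main obstacle is the rigour of this disk construction in the non-compact setting: one must check that the complete flow of $-J_G Y_i$ actually reaches $D_i$ and that the orbit circles $S^1_{\nu_i}$ genuinely collapse there, so that the swept surface extends to a smooth \emph{disk} rather than remaining a half-infinite cylinder. This is precisely where completeness of the $J$-images of the Hamiltonian generators is indispensable, and it is the reason one passes to the concrete $\bbC^d$-model, where the capping can be read off from the coordinate description; the finiteness of the facets and the existence of a vertex enter to ensure both that $J_G$ exists and that there is a collapsing facet to cap against. A secondary point to verify is that Maccheroni's non-existence result applies under completeness of the \emph{complex structure} rather than of the metric $g$; properness of $\mu$ together with the finiteness of the facets should suffice to control the disk at infinity so that the theorem is applicable.
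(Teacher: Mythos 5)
Your proposal is correct and follows essentially the same route as the paper: reduce to the Guillemin complex structure $J_G$ via the equivariant biholomorphism guaranteed by completeness, construct the standard coordinate holomorphic disk $z\mapsto q(zz_1,z_2,\dots,z_d)$ in the Karshon--Lerman quotient model of $\bbC^d$ with boundary on the orbit, and conclude by Maccheroni's non-existence theorem for holomorphic disks bounded by minimal Lagrangians in negative Ricci curvature. Your description of the disk via the flow of $-J_GY_i$ and the $S^1_{\nu_i}$-orbits is just a reparametrised account of the same coordinate disk the paper writes down explicitly.
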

\begin{proof}
Recall Karshon-Lerman's generalisation of the Delzant construction, \cite{kl}, to good non-compact toric manifolds. Let $\nu_1, \ldots , \nu_d \in \mathbb{R}^n$ be primitive inwards pointing normals to the (non-compact) polytope $P$ and consider the exact sequence
$$0 \to \mathfrak{k} \to \mathbb{R}^d \to \mathbb{R}^n \to 0,$$
where the rightmost map sends a standard basis element $e_i$ to $\nu_i$. Given that the vectors $\nu_i$ are integral, the kernel $\mathfrak{k}$ can be exponentiated to form a torus $K \hookrightarrow \mathbb{T}^d$ which acts on $\mathbb{C}^d$ in the standard way. Denote by $\mu_K: \mathbb{C}^d \to \mathfrak{k}^*\cong \mathbb{R}^{d-n}$ the moment map for this action, then $(M,\omega)$ is obtained as the symplectic quotient of $\mathbb{C}^d$ by this action, for instance $M=\mu_K^{-1}(0)/ K$. The standard K\"ahler structure on $\mathbb{C}^d$ induces one on $M$, which corresponds to the Guillemin potential and the corresponding complex structure $J_G$. 

Recall that the any other invariant compatible complete complex structure $J$ on $M$ is equivariantly bi-holomorphic to $J_G$. Thus, $J$-holomorphic disks with boundary on a specific fibre exist if and only if they do for $J_G$. Thus, we shall now only search for holomorphic disks with respect to the canonical complex structure. In the compact case, these are classified by Cho and Oh in \cite{Cho} and always lift to holomorphic disks on $\mathbb{C}^d$. We shall thus construct holomorphic disks on $M$ by constructing appropriate holomorphic disks in $\bbC^d.$ In order to do that it is convenient to exhibit $M$ as a GIT K\"ahler quotient: $M$ is the orbit space of stable orbits of the complexified group $K_\bbC$
$$
M= \left(\cup_{F \leq P} \bbC^d_F\right) /K_{\bbC},
$$
where union is taken over every face $F$ of $P$. Each such face is determined as the vanishing of a set of affine functions
$$F=\{x\in P:  l_i(x)=0, i\in I_F\subset \{1,\cdots,d\}\, l_i(x)\ne 0, i\notin I_F\}.$$ 
We define $\bbC^d_F$ to be 
$$
\bbC^d_{F}=\{(z_1,\cdots z_d):  z_i\ne 0 \text{ if } i\notin I_F, \text{ and } z_i=0 \text{ if } i\in I_F\}.
$$
Now, we show that for any Lagrangian orbit $L=\mu^{-1}(x)$ there is at least one holomorphic disk with boundary on $L$. Assume that $x$ is in the interior of a face $F$. Then $\mu^{-1}(x)$ is contained in $\bbC^d_F/K_\bbC.$ Let $(z_1,\cdots,z_d)$ be in $\pi^{-1}(x)$ and assume, without loss of generality, that $1\notin I_F.$ Note that no face $F$ can have $I_F=\{1,\cdots, d\}.$ Now consider 
$$
\begin{aligned}
w:\bbD^2 &\mapsto \mathbb{C}^{d}_F\\
z&\rightarrow (zz_1,\cdots,z_d). 
\end{aligned}
$$
and the composition of this map with the quotient map
$$
q: \left(\cup_{F \leq P} \bbC^d_F\right)\mapsto M= \left(\cup_{F \leq P} \bbC^d_F\right) /K_{\bbC},
 $$
$q\circ w.$ Now the action of $\bbT^n$ on $M$ is defined via an identification of $\bbT^d/K$ with $\bbT^n.$ When $|z|=1$, then $(zz_1,\cdots,z_d)=(z,1\cdots,1)\cdot (z_1,\cdots,z_d)$ so that $q(z_1,\cdots,z_d)$ is in the same $\bbT^n\simeq\bbT^d/K$ orbit as $q(zz_1,\cdots,z_d)$ and
$$
\mu(q(z_1,\cdots,z_d))=\mu(q(zz_1,\cdots,z_d)),
$$
hence $q\circ w(\partial \bbD)\subset L=\mu^{-1}(x).$ We have thus shown that for any Lagrangian orbit $L=\mu^{-1}(x)$ there is at least one holomorphic disk with boundary on $L$. In \cite{maccheroni}, Maccheroni proves that a minimal Lagrangian in a K\"ahler manifold whose Ricci curvature is negative, cannot bound holomorphic disks. Therefore $L$ cannot be minimal with respect to a toric K\"ahler metric.
\end{proof}

\section{Prescribing minimal Lagrangians}\label{sec:Prescribing}

It is interesting to wonder what is special about minimal Lagrangian fibres. Although not much is known about this question in general, what we would like to do here is to understand what sets of fibres of the moment map on a toric manifold can be realised as minimal Lagrangians for some toric K\"ahler metric on the toric manifold. We have already formulated our question more precisely in the examples as Question \ref{prescribe_minimal_lagrangians}. We have also seen that the type of answer we get depends strongly on whether the metric we are considering is analytic as we proved the following:
\begin{proposition}
There are (non-analytic) toric K\"ahler metrics on $\mathbb{CP}^2$ which admit accumulating minimal Lagrangians.
\end{proposition}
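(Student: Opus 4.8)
The plan is to deduce this proposition directly from item (c) of the proposition on cohomogeneity-1 toric K\"ahler metrics on $\bcp^2$ established earlier in the excerpt. That result already produces a non-analytic cohomogeneity-1 toric K\"ahler metric on $\bcp^2$ whose set of minimal Lagrangian fibres is the \emph{entire} diagonal segment joining $(\tfrac{1}{6},\tfrac{1}{6})$ and $(\tfrac{1}{3},\tfrac{1}{3})$ in the standard simplex. Since this set is a continuum, each of its points is an accumulation point of minimal Lagrangian fibres, which is precisely the assertion to be proved. Thus the main work is simply to exhibit the metric and point to the continuum it generates.

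Concretely, I would recall the symplectic potential $u = u_G + \tfrac{1}{2} f(x_1+x_2)$ with $f''$ chosen by the cutoff prescription of item (c): on $[1/3,2/3]$ one sets $f''(t) = \bigl(e^{\int_{1/3}^t \frac{2-3\tau}{\tau(1-\tau)}\,d\tau} - 1\bigr)/(t(1-t))$, one takes $f''\equiv 0$ on $[0,1/3-\epsilon]\cup[2/3+\epsilon,1]$, and one interpolates smoothly in between while keeping $f'' > -1/(t(t-1))$. Two checks make this a legitimate example, and both were already arranged in the earlier construction: first, the inequality $f'' > -1/(t(t-1))$ is exactly the condition guaranteeing that $\Hess(u)$ is positive definite on the interior of the simplex, so that $g$ really is a toric K\"ahler metric; second, $\bigl(\prod_{i=1}^3 l_i\bigr)\det\Hess(u)$ must be smooth up to $\partial P$ so that the boundary regularity criterion of \cite{a2} is satisfied. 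The essential computation to restate is that along the diagonal $x_1=x_2$, writing $t=2x_1$, the critical-point equation for $\log\det\Hess(u)$ collapses to
$$
\frac{d}{dt}\log\!\bigl(1 + t(1-t) f''\bigr) - \frac{2-3t}{t(1-t)} = 0,
$$
and the displayed choice of $f''$ forces the left-hand side to vanish identically on $[1/3,2/3]$. Hence every fibre over $\{(x_1,x_1): x_1\in[\tfrac16,\tfrac13]\}$ is a minimal Lagrangian, producing the desired continuum.

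Finally I would establish non-analyticity: $f''$ vanishes on the nontrivial interval $[0,1/3-\epsilon]$ yet is nonzero on $[1/3,2/3]$, so it cannot be real-analytic, and therefore neither can $u$ nor the induced metric. It is worth contrasting this with item (b) of the same proposition, which shows that an \emph{analytic} cohomogeneity-1 metric can only have finitely many minimal Lagrangian fibres; this is exactly why the accumulation phenomenon requires leaving the analytic category. I do not expect a serious obstacle here. The only steps demanding care are the positivity and boundary-regularity verifications, and these are mild once one notes that the smooth cutoff can be carried out without violating $f'' > -1/(t(t-1))$. The genuinely nontrivial input — solving the ordinary differential equation so that the critical-point identity holds on a whole interval rather than at isolated points — was already performed in item (c), so it suffices to invoke it.
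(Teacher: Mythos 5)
Your proposal is correct and follows exactly the paper's route: the paper proves this proposition precisely by citing item (c) of the cohomogeneity-1 proposition from the $\mathbb{CP}^2$ example, whose cutoff construction of $f''$ yields a continuum of minimal Lagrangian fibres over the diagonal segment from $(\tfrac16,\tfrac16)$ to $(\tfrac13,\tfrac13)$. Your added verifications (positivity of $\Hess(u)$, boundary regularity, and the non-analyticity observation via item (b)) are consistent with what the earlier construction already established.
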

We do not expect this to be so in general. For instance, we have seen in Corollary \ref{cor:(a)} that if $Ric$ is everywhere non-degenerate, then the minimal Lagrangian fibres must be isolated. With this in mind, we address a more refined version of Question \ref{prescribe_minimal_lagrangians}. We would like to know which finite sets arise as moment map images of minimal Lagrangian fibres for some toric K\"ahler metric. 

\begin{question}\label{prescribe_finite}
Let $X$ be a toric K\"ahler manifold with moment polytope $P.$ For which sets of $k$ distinct points $p_1,\cdots, p_k$ in the interior of $P$ is there a toric K\"ahler metric on $X$ whose minimal Lagrangian fibres are those over $S=\lbrace p_1,\cdots, p_k \rbrace$?
\end{question}

Although we are not able to settle this question we will formulate a result towards an answer mainly because its proof seems to point in the direction of a possible strategy to a complete answer.

\begin{theorem}
Let $P$ be a Delzant polytope and $p_1,\cdots, p_k$ distinct points in the interior of $P.$ There is a toric K\"ahler metric on the toric manifold corresponding to $P$ such that the fibres over $p_1,\cdots, p_k$ are minimal Lagrangians.
\end{theorem}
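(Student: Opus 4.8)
The plan is to realise the prescribed minimal fibres by a compactly supported perturbation of the Guillemin symplectic potential $u_G$. By Proposition \ref{prop:Correspondence_Minimal_Critical_V}, a fibre $\mu^{-1}(p_i)$ is a minimal Lagrangian precisely when $p_i$ is a critical point of $V = (\det\Hess(u))^{-1/2}$, equivalently of $F := \log\det\Hess(u)$. So it suffices to produce a symplectic potential $u$ whose associated $F$ satisfies $\nabla F(p_i)=0$ for all $i$. I would look for $u = u_G + \phi$ with $\phi$ smooth and supported in a union of small disjoint balls $B(p_i,r)\subset \inte P$; such a $\phi$ automatically keeps $u-u_G$ smooth up to $\bar P$ and leaves the boundary behaviour of $u_G$ untouched, so that (by the criterion from \cite{a2} used in the $\bcp^2$ examples above) $u$ defines a genuine toric K\"ahler metric as soon as $\Hess(u)=\Hess(u_G)+\Hess(\phi)$ stays positive definite.

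First I would reduce the problem to a pointwise condition on jets. Arranging the $2$-jet of $\phi$ to vanish at each $p_i$ (value, gradient and Hessian all zero), one has $\Hess(u)(p_i)=\Hess(u_G)(p_i)=:H_i$, which is positive definite, and differentiating $F$ gives the exact identity
\[
\nabla_l F(p_i)=\nabla_l\big(\log\det\Hess(u_G)\big)(p_i)+\sum_{j,k}(H_i^{-1})_{jk}\,\partial_l\partial_j\partial_k\phi(p_i).
\]
Thus the only data of $\phi$ entering the critical-point condition are its third derivatives at $p_i$, and the task is to choose the symmetric $3$-tensor $c_i=(\partial_l\partial_j\partial_k\phi(p_i))$ so that the right-hand side vanishes. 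The linear map $c_i\mapsto\big(\sum_{jk}(H_i^{-1})_{jk}c_{i,ljk}\big)_l$ is surjective: evaluated on tensors $c_{ljk}=v_l\delta_{jk}+v_j\delta_{lk}+v_k\delta_{lj}$ it equals $\big(\tr(H_i^{-1})\,I+2H_i^{-1}\big)v$, and $\tr(H_i^{-1})I+2H_i^{-1}$ is positive definite, hence invertible. So the required $3$-jet $c_i$ can be solved for at each point independently.

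It then remains to realise these prescribed $2$- and $3$-jets by a single globally smooth $\phi$ whose Hessian is also uniformly small, which is where the one genuinely delicate point lies. Here I would take $\phi=\sum_i\chi_i P_i$, where $P_i$ is the cubic with vanishing $2$-jet and $3$-jet equal to $c_i$ at $p_i$, and $\chi_i$ is a bump equal to $1$ near $p_i$ and supported in $B(p_i,r)$. The crucial scaling observation is that shrinking $r$ does not alter the $3$-jet of $\chi_i P_i$ at $p_i$ (where $\chi_i\equiv 1$, so $\chi_iP_i=P_i$ near $p_i$), yet makes $\Hess(\phi)$ uniformly small: on $B(p_i,r)$ one has $|P_i|=O(r^3)$, $|\nabla P_i|=O(r^2)$, $|\Hess P_i|=O(r)$, while the cutoff contributes factors of size $O(r^{-1})$ and $O(r^{-2})$, so that every term of $\Hess(\chi_iP_i)$ is $O(r)$.

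Since the $H_i$ are uniformly positive definite on a neighbourhood of the finite set $\{p_1,\dots,p_k\}$, choosing $r$ small enough keeps $\Hess(u_G)+\Hess(\phi)$ positive definite throughout the perturbation region, while it is unchanged elsewhere. The resulting $u=u_G+\phi$ is therefore the symplectic potential of a toric K\"ahler metric for which each $p_i$ is a critical point of $F$, i.e.\ the fibres $\mu^{-1}(p_i)$ are minimal Lagrangians. The main obstacle is precisely the tension between forcing nonzero third derivatives at the $p_i$ (needed to cancel $\nabla\log\det\Hess(u_G)$) and preserving positivity of the Hessian; it is resolved by the cubic-times-narrow-bump scaling, which decouples the fixed $3$-jet from the vanishing $C^0$ size of $\Hess(\phi)$. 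This argument is dimension-independent and replaces the separable ansatz $u_G+\tfrac12(f(x_1)+g(x_2))$ used for $\bcp^2$ by localised bumps.
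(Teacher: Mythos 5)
Your argument is correct, and it takes a genuinely different route from the paper. The paper fixes a function $F$ with critical points at the $p_i$, solves the Dirichlet problem for the Monge--Amp\`ere equation $\det\Hess(u_G+f)=F$ on a subdomain $\Omega\supset\{p_1,\dots,p_k\}$ using Caffarelli--Nirenberg--Spruck, and then extends $f$ by zero and perturbs near $\partial\Omega$ to obtain a global symplectic potential; the points $p_i$ are then critical for $\det\Hess u$ because they are critical for $F$. You instead perform an explicit local jet surgery: vanishing $2$-jet so that $\Hess u(p_i)=\Hess u_G(p_i)$ is untouched, a $3$-jet chosen to cancel $\nabla\log\det\Hess u_G(p_i)$ via the surjectivity of $c\mapsto\bigl(\sum_{jk}(H_i^{-1})_{jk}c_{ljk}\bigr)_l$ (your verification on tensors of the form $v_l\delta_{jk}+v_j\delta_{lk}+v_k\delta_{lj}$, giving the invertible matrix $\tr(H_i^{-1})I+2H_i^{-1}$, is right), and the cubic-times-narrow-bump scaling $\Hess(\chi_iP_i)=O(r)$ to preserve positivity; all boundary conditions are inherited since $u=u_G$ near $\partial P$. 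Your route is more elementary and self-contained — it avoids the PDE existence theorem entirely, as well as the somewhat informally described extension-and-perturbation step in the paper — and it is manifestly dimension-independent. What the paper's approach buys in exchange is control of the \emph{entire function} $\det\Hess u$ on a neighbourhood of the prescribed points (not just its first-order behaviour at them), which is why it naturally feeds into the continuity-method strategy the authors propose for the harder problem of prescribing the minimal fibres \emph{exactly}; your local construction gives no leverage there. Both arguments share the stated limitation of not excluding additional minimal fibres.
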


Unfortunately, in our construction nothing ensures there are no other minimal fibres.

\begin{proof}
The first step is to consider a positive function $F\in \mcC^\infty(P)$ with critical points at $p_1,\cdots, p_k.$ Say for instance $F(p)=\prod_{i=1}^kd^2(p,p_i)$ for $p\in P.$ Now let $\Omega$ be a proper open smooth domain of $P$ containing $p_1,\cdots, p_k$ in its interior and write down the Monge-Amp\`ere equation with boundary values
\begin{equation}\label{MA}
\begin{cases}
\det(\Hess(u_G+f))=F, \quad \text{in} \, \Omega\\
f_{|\partial \Omega}=0.
\end{cases}
\end{equation}
This equation was considered in section 8 of \cite{cns} where the authors prove that it has a solution $f \in \mcC^\infty(\bar{\Omega})$ such that $\Hess(u_G+f)$ is positive definite in $\bar{\Omega}.$ Now extend $f$ by zero to $P.$ This may not be smooth but by perturbing $f$ in a neighbourhood of $\partial \Omega$ we can get $v$ such that
\begin{itemize}
\item $v\in \mcC^\infty(P).$
\item $\Hess(u_G+v)$ is positive definite in $\bar{\Omega}.$ 
\item The function $v$ is zero near $\partial P$ so that $u_G+v$ has the right behaviour near $\partial P.$
\item $\det(\Hess(u_G+v))=F$ in some open smooth proper subdomain of $\Omega$ containing $p_1,\cdots, p_k$ so that $p_1,\cdots, p_k$ are still critical points for $\det\Hess(u_G+v)$ and therefore minimal fibres for the metric whose symplectic potential is $u_G+v.$
\end{itemize}
And this proves our result.
\end{proof}
We would like to end this section by proposing a general strategy to settle Question \ref{prescribe_finite}. As before write 
$$
P=\{x\in \bbR^n: l_i(x)\geq 0, \, i=1,\cdots, d\},
$$
where $l_i(x)=x\ip \nu_i-\lambda_i$ with $\nu_i$ being the exterior primitive normals to the polytope $P.$ Note that for any toric K\"ahler metric with symplectic potential $u_G+f,$
$$
\left( \prod_{i=1}^d l_i \right) \det \Hess(u_G+f) \in \mcC^{\infty}(P).
$$
\begin{itemize}
\item Find a function $F\in \mcC^\infty(P)$ such that $F$ has critical points exactly at $p_1,\cdots, p_k$ and such that 
$$
\varphi = \left( \prod_{i=1}^d l_i \right) F \in \mcC^{\infty}(P).
$$
\item
Now solve for 
\begin{equation*}
\begin{cases}
\det(\Hess(u_G+f))=F, \quad \text{in} \,  P\\
f_{|\partial P}=0.
\end{cases}
\end{equation*}
Because $u_G$ is not smooth up to the boundary in $P$ the results in \cite{cns} do not apply. But their continuity method to prove existence may extend. Consider the following family of equations
\begin{equation*}
\begin{cases}
\det(\Hess(u_G+f_t))=\frac{(1-t)\left(\prod_{i=1}^d l_i\right)\det (\Hess(u_G))+t\varphi}{\prod_{i=1}^d l_i}, \quad \text{in} \,  P\\
f_t |_{\partial P} =0.
\end{cases}
\end{equation*}
It admits $0$ as a solution when $t=0$ and by proving a priori estimates on solutions one may be able to show that the equation can be solved for $t=1.$ 
\end{itemize}

\section{Lagrangian mean curvature flow}

Let $(X,g)$ be a Riemannian manifold and $Y \subset X$ a smooth submanifold. The Lagrangian mean curvature flow of $Y$ in $X$ is defined to be the flow of $Y$ by its mean curvature. We may write $Y_t$ for $t \in [0,T)$ for such a flow which must then satisfy
\begin{equation}\label{eq:mean_curvature_flow}
\del_t Y_t = H_t,
\end{equation}
where $H_t$ is the mean curvature of $Y_t$, i.e. the trace of the second fundamental form of $Y_t$. In a local orthonormal framing $\lbrace e_i \rbrace_{i=1}^n$ of $Y$ this can be written as $H_t=\sum_{i=1}^n(\nabla_{e_i}e_i)^{\perp}$, where $\perp$ denotes the projection onto the normal directions to $Y_t$. In the case when $X$ is equipped with a compatible symplectic structure $\omega$ and $Y_0$ is a Lagrangian submanifold, then the evolved submanifolds $Y_t$ need not be Lagrangian. However, under certain conditions such as $(X,\omega,g)$ being K\"ahler-Einstein this turns out to be the case. In such a situation we refer to the flow in \ref{eq:mean_curvature_flow} as Lagrangian mean curvature flow. We shall now see that the mean curvature flow of a Lagrangian $\bbT^n$-orbit on a toric K\"ahler manifold gives rise to a Lagrangian mean curvature flow. In fact, one of a very simple nature which descends to a gradient flow on the polytope. This is the content of Theorem \ref{thm:Mean_Curvature} which, for convenience, we restate here in a slightly more precise way.


\begin{theorem}\label{thm:Mean_Curvature_Precise}
	Let $(M^{2n},\omega,g, \mu)$ be a toric K\"ahler manifold and equip the interior of the moment polytope with the metric $g_P=\sum_{i,j=1}^n u_{ij}dx^i \otimes dx^j$. The mean curvature of the Lagrangian orbits can be written as
	$$H=-\nabla \log V.$$ 
	Let $x \in \inte P$ and $L_t$ be the mean curvature flow of $L_0=\mu^{-1}(x)$ for $t \in [0,T)$. Then, $\mu(L_t)=x_t \in P$ is a solution of the negative $g_P$-gradient flow of $\log (V)$ starting at $x$.
\end{theorem}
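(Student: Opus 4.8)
The plan is to compute the mean curvature vector $H$ of a Lagrangian orbit $L=\mu^{-1}(x)$ directly, using the action-angle expression $g = u_{ij}\,dx^i dx^j + u^{ij}\,d\theta^i d\theta^j$, and to show it equals $-\nabla \log V$ where the gradient is taken with respect to $g_P$ and then lifted horizontally. The orbit $L$ is the torus $\{x\}\times\bbT^n$, so a natural tangent framing is $\{\partial_{\theta^i}\}_{i=1}^n$, with inner products $g(\partial_{\theta^i},\partial_{\theta^j}) = u^{ij}$. The normal bundle is spanned by the action directions $\{\partial_{x^k}\}$, with $g(\partial_{x^k},\partial_{x^l}) = u_{kl}$. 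Since $H = \sum_{i}(\nabla_{e_i}e_i)^\perp$ for an orthonormal frame $\{e_i\}$, I would instead work frame-independently: for the (non-orthonormal) frame $\{\partial_{\theta^i}\}$ one has $H = \sum_{i,j} g(\partial_{\theta^i},\partial_{\theta^j})^{-1}(\nabla_{\partial_{\theta^i}}\partial_{\theta^j})^\perp = \sum_{i,j}u_{ij}(\nabla_{\partial_{\theta^i}}\partial_{\theta^j})^\perp$, where $(u_{ij})$ is the inverse matrix of $(u^{ij})$, i.e. the Hessian of $u$.

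The key computation is the Christoffel symbols $\nabla_{\partial_{\theta^i}}\partial_{\theta^j}$ projected onto the horizontal (action) directions. First I would record that the metric coefficients depend only on $x$, not on $\theta$, which kills all $\theta$-derivatives in the Koszul formula. Writing $\nabla_{\partial_{\theta^i}}\partial_{\theta^j} = \Gamma^{k}_{ij}\,\partial_{x^k} + (\text{vertical})$ and applying the Koszul formula, the only surviving terms come from $-\partial_{x^k} g(\partial_{\theta^i},\partial_{\theta^j}) = -\partial_{x^k} u^{ij}$, giving the normal part
\begin{equation}\nonumber
(\nabla_{\partial_{\theta^i}}\partial_{\theta^j})^\perp = -\tfrac{1}{2}\sum_{k,l} u_{kl}\,\frac{\del u^{ij}}{\del x^l}\,\partial_{x^k}.
\end{equation}
Summing against $u_{ij}$ and using the same identity $\sum_{i,j}u_{ij}\,\partial_{x^l} u^{ij} = -\partial_{x^l}\log\det\Hess(u) = \partial_{x^l}\log V^2 = 2\,\partial_{x^l}\log V$ that already appears in the proof of Proposition \ref{prop:Ricci} (via $\sum u^{im}\partial_k u_{im} = \partial_k \log\det H$ and $V^{-2}=\det\Hess u$), I would obtain
\begin{equation}\nonumber
H = \sum_{i,j}u_{ij}(\nabla_{\partial_{\theta^i}}\partial_{\theta^j})^\perp = -\sum_{k,l}u_{kl}\,\frac{\del \log V}{\del x^l}\,\partial_{x^k} = -\nabla^{g_P}\log V,
\end{equation}
the last equality because $\nabla^{g_P} f = \sum_{k,l}u_{kl}\,\partial_{x^l}f\,\partial_{x^k}$ is exactly the $g_P$-gradient expressed via the inverse metric $u_{kl}=(u^{kl})^{-1}$, and this horizontal vector is $g$-orthogonal to $L$.

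Having established $H = -\nabla\log V$, the flow statement follows almost formally. Along the mean curvature flow $\del_t L_t = H_t$, the orbit $L_t$ stays a $\bbT^n$-orbit because $H$ is torus-invariant and horizontal, hence $\mu(L_t)$ is a well-defined curve $x_t$ in $\inte P$; here I would invoke that $\mu$ is a Riemannian submersion from $(M,g)$ onto $(\inte P, g_P)$, so that the horizontal velocity $H_t$ pushes forward under $d\mu$ to $-\nabla^{g_P}\log V$ evaluated at $x_t$. Therefore $\dot x_t = -\nabla^{g_P}\log V(x_t)$, which is precisely the negative $g_P$-gradient flow of $\log V$ starting at $x$. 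I expect the main obstacle to be the bookkeeping in the Koszul computation — correctly identifying which terms survive once one uses that the metric is $\theta$-independent, and matching the resulting trace to $\partial_{x^l}\log\det\Hess(u)$ with the right sign and factor — rather than anything conceptually deep; the identification of $H$ with a gradient and the submersion argument are then routine.
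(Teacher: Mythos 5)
Your proposal is correct and follows essentially the same route as the paper: reduce $H$ to $\sum_{i,j}u_{ij}(\nabla_{\partial_{\theta^i}}\partial_{\theta^j})^\perp$, compute the horizontal part of $\nabla_{\partial_{\theta^i}}\partial_{\theta^j}$ from the Koszul formula using $\theta$-independence of the metric, identify the resulting trace with $\partial_{x^l}\log\det\Hess(u)$ via Jacobi's formula, and conclude via the Riemannian submersion $\mu$. One notational slip: since $g_P=\sum u_{ij}\,dx^i dx^j$, indices in the action directions are raised with $u^{kl}$ (the inverse Hessian), so your displayed formulas for $(\nabla_{\partial_{\theta^i}}\partial_{\theta^j})^\perp$ and for $\nabla^{g_P}f$ should carry $u^{kl}$ where you wrote $u_{kl}$; the two slips cancel, so the final identity $H=-\nabla^{g_P}\log V$ and the flow statement are unaffected.
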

\begin{proof}
	Let $\lbrace e_i \rbrace_{i=1}^n $ and $\lbrace n_i \rbrace_{i=1}^n $ be local orthonormal frames of the $\mu$-fibres and their normal bundle respectively. Further denote by $\lbrace e^i \rbrace_{i=1}^n $ and $\lbrace n^i \rbrace_{i=1}^n $ the dual coframes. Then, the fibres' mean curvature is
	$$H=\sum_{i=1}^n (\nabla_{e_i} e_i)^{\perp} = \sum_{j=1}^n  \left( \sum_{i=1}^n S_{n_j}(e_i,e_i) \right) n_j,$$
	where $S_{n_j}(e_i,e_i)=n^j(\nabla_{e_i}e_i)$. Here, the operator $\perp$ on vector fields along a submanifold denotes the orthogonal projection onto the normal bundle to that submanifold's tangent bundle. Now, define the fibrewise tangent bundle map $S_{n_j}^{\#}$ by $\langle S_{n_j}^{\#}(X), Y \rangle = S_{n_j}(X,Y)$ with respect to which we can rewrite
	$$H= \sum_{j=1}^n \left( \sum_{i=1}^n  \langle S_{n_j}^{\#}(e_i) , e_i \rangle \right) n_j =\sum_{j=1}^n \left( \sum_{i=1}^n e^i(S_{n_j}^{\#}(e_i)) \right) n_j,$$
	as the $\lbrace e_i \rbrace_{i=1}^n $ are orthonormal. On the other hand, by the invariance of the trace under basis change we may further rewrite this as
	\begin{equation}\label{eq:Mean_Curvature_Intermediate}
	H = \sum_{j=1}^n  \left( \sum_{i=1}^n  d\theta^i(S_{n_j}^{\#}(\partial_{\theta_i} )) \right) n_j.
	\end{equation}
	Now we compute, for each $j$, the matrix entries of $S_{n_j}^{\#}$ by writing $S_{n_j}^{\#}(\partial_{\theta_i})=\sum_{l=1}^n S^j_{l i} \partial_{\theta_l}$. Then, we find
	\begin{align*}
	\sum_{l=1}^n u^{kl} S^j_{li} & = \langle S_{n_j}^{\#}(\partial_{\theta_i}) , \partial_{\theta_k} \rangle  \\
	& = n^j (\nabla_{\partial_{\theta_i}} \partial_{\theta_k} ).
	\end{align*}
	Multiplying by $u_{rk}$ and summing over $k$ we find $S^j_{ri}=n^j(\sum_{k=1}^n u_{rk} \nabla_{\partial_{\theta_i}} \partial_{\theta_k} ),$ for $i,j,r=1,\cdots, n$. Inserting this into equation \ref{eq:Mean_Curvature_Intermediate} we find that
	\begin{align*}
	H & = \sum_{j=1}^n  \left( \sum_{i=1}^n d\theta^i ( \sum_{l=1}^nS^j_{li} \partial_{\theta_l} ) \right) n_j \\
	& = \sum_{i,j,k,l=1}^n  \left(   d\theta^i \left( n^j(u_{lk} \nabla_{\partial_{\theta_i}} \partial_{\theta_k} ) \partial_{\theta_l} \right) \right) n_j \\
	& = \sum_{i,j,k=1}^n  \left( n^j(u_{ik} \nabla_{\partial_{\theta_i}} \partial_{\theta_k} )  \right) n_j \\
	& =  \sum_{i,k=1}^n u_{ik}( \nabla_{\partial_{\theta_i}} \partial_{\theta_k})^{\perp},
	\end{align*}
	Now, to compute $\nabla_{\partial_{\theta_i}} \partial_{\theta_k}$, we find the Christoffel symbols of the Levi-Civita connection of $g$ in the framing $\lbrace \partial_{x_i},\partial_{\theta_i} \rbrace_{i=1}^n$. These can be easily derived from the Koszul formula to be given by
	$$
	\Gamma_{\theta_i\theta_k}^{x_l}=- \sum_{j=1}^n\frac{1}{2}u^{lj}\frac{\partial{u^{ik}}}{\partial x_j},
	$$
	whereas $\Gamma_{\theta_i\theta_k}^{\theta_l}=0.$ Hence
	$$
	\nabla_{\partial_{\theta_i}} \partial_{\theta_k}=-\frac{1}{2}\sum_{j,l=1}^nu^{lj}\frac{\partial{u^{ik}}}{\partial x_j}\frac{\partial}{\partial x_l}.
	$$
	Therefore
	\begin{align*}
	H & =\sum_{i,k=1}^nu_{ik}( \nabla_{\partial_{\theta_i}} \partial_{\theta_k})^{\perp}\\
	&=-\frac{1}{2}\sum_{i,j,k,l=1}^n u^{jl}\left(u_{ik}\frac{\partial{u^{ik}}}{\partial x_j}\right)\frac{\partial}{\partial x_l}.\\
	\end{align*}
	Now we can write 
	$$
	\frac{\partial{u^{ik}}}{\partial x_j}=-\sum_{p,q=1}^nu^{ip}u_{pqj}u^{qk},
	$$
	for $i,j,k=1,\cdots n$, where we have have used the notation
	$$
	u_{pqj}=\frac{\partial^3 u}{\partial x_p \partial x_q \partial x_j}.
	$$
	It follows that 
	$$
	\sum_{i,k=1}^n u_{ik}\frac{\partial{u^{ik}}}{\partial x_j}=-\sum_{q,k=1}^nu^{qk}u_{qkj}.
	$$
	On the other hand it follows from Jacobi's formula for the derivative of the determinant of a matrix that
	$$
	\frac{\partial \left(\det \Hess u\right)}{\partial x_j}=\det \left(\Hess u\right) \sum_{p,q=1}^nu^{pq}u_{pqj},
	$$
	and so
	$$
	\sum_{i,k=1}^nu_{ik}\frac{\partial{u^{ik}}}{\partial x_j}=- \frac{1}{\det \left(\Hess u\right)} \frac{\partial \left(\det \Hess u\right)}{\partial x_j}.
	$$
	Inserting in $H$ we find that
	\begin{align*}
	H &=\frac{1}{2}\sum_{j,l=1}^nu^{jl}\frac{\partial \left(\log\left(\det \Hess u\right)\right)}{\partial x_j}\frac{\partial}{\partial x_l}\\
	&=\frac{1}{2}\nabla \log\left(\det \left(\Hess u\right)\right) \\
	&=-\nabla\log V.
	\end{align*}
	Given that the map $\mu : \inte P \times \bbT^n \subset M \to \inte P$ is a Riemannian submersion with respect to the metrics $g$ on $M$ and $g_P$ on $P$ respectively, the assertion regarding the mean curvature flow of a $\mu$-fibre follows.	
\end{proof}

\begin{remark}
	It also follows from this formula for the mean curvature vector $H$ that minimal $\bbT^n$-orbits are in correspondence with interior critical points of $V$ as stated in Proposition \ref{prop:Correspondence_Minimal_Critical_V}.  
\end{remark}

In general the mean curvature flow of a Lagrangian submanifold need not preserve the Lagrangian condition. As a consequence of Theorem \ref{thm:Mean_Curvature_Precise} we conclude that, in the case of toric K\"ahler manifolds, the mean curvature flow preserves the $\bbT^n$-orbits and so the Lagrangian condition for these. 

\begin{corollary}
	Let $(M^{2n},\omega,g,\mu)$ be a toric K\"ahler manifold and $x \in \inte P$. Let $L_t$, for $t \in [0,T)$ be a solution to the mean curvature flow with $L_0=\mu^{-1}(x)$, then $L_t$ is Lagrangian for all $t \in [0,T)$.
\end{corollary}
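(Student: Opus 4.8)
The plan is to read the corollary off Theorem~\ref{thm:Mean_Curvature_Precise}, whose real content is that the mean curvature flow of a $\bbT^n$-orbit preserves the toric symmetry. The structural point is that the mean curvature vector $H=-\nabla\log V$ computed there is the horizontal lift, via the Riemannian submersion $\mu$, of the vector field $-\nabla^{g_P}\log V$ on $\inte P$; in particular $H$ is $\bbT^n$-invariant and constant along each fibre $\mu^{-1}(x)$.

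First I would construct a candidate flow by hand. Let $x_t$ solve the negative $g_P$-gradient flow $\dot x_t=-\nabla^{g_P}\log V(x_t)$ with $x_0=x$, on its maximal interval of existence inside $\inte P$, and set $\tilde L_t:=\mu^{-1}(x_t)$. Each $\tilde L_t$ is a genuine $\bbT^n$-fibre, hence a Lagrangian $n$-torus. I would then check that $\tilde L_t$ is itself a solution of the mean curvature flow: parametrising $\tilde L_t$ by the angle coordinates, the velocity of a point is the horizontal lift of $\dot x_t$, which by the submersion property is $-\nabla\log V$ on $M$, and this equals the mean curvature $H_t$ of $\tilde L_t$ by Theorem~\ref{thm:Mean_Curvature_Precise}. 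Being horizontal, this velocity is automatically normal to the (vertical) fibre, so $\tilde L_t$ satisfies $\partial_t\tilde L_t=H_t$.

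The second step is to identify $L_t$ with $\tilde L_t$. Since $\tilde L_0=\mu^{-1}(x)=L_0$ and the mean curvature flow of a smooth compact embedded submanifold is unique, we get $L_t=\tilde L_t$ for every $t\in[0,T)$; as each $\tilde L_t=\mu^{-1}(x_t)$ is a moment-map fibre over an interior point it is a Lagrangian torus, and the claim follows. A shorter alternative, taking Theorem~\ref{thm:Mean_Curvature_Precise} at face value, is to observe that it already asserts $\mu(L_t)=x_t$, so $L_t\subseteq\mu^{-1}(x_t)$; a compact connected $n$-dimensional $L_t$ sitting inside the connected $n$-torus $\mu^{-1}(x_t)$ must be the whole fibre, which is Lagrangian.

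The main obstacle is the uniqueness input: one must invoke short-time uniqueness of the mean curvature flow for smooth compact initial data to exclude non-symmetric solutions, or else use the dimension-plus-connectedness argument to upgrade the inclusion $L_t\subseteq\mu^{-1}(x_t)$ to an equality. Once the symmetry is secured, Lagrangianity is immediate because every interior fibre of the moment map is a Lagrangian torus; everything else is a formal consequence of the submersion picture already in place.
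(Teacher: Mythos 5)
Your argument is correct and follows the same route as the paper, which simply deduces the corollary from Theorem~\ref{thm:Mean_Curvature_Precise} (the flow stays among the moment-map fibres, which are Lagrangian); the paper gives no further detail. Your explicit appeal to uniqueness of the mean curvature flow for compact initial data to rule out non-symmetric solutions is exactly the step the paper leaves implicit, so your write-up is, if anything, more complete.
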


\begin{remark}
The Maslov form of a Lagrangian fibre $L=\mu^{-1}(x)$ is defined by
\begin{align*}
\sigma:= \iota_H \omega & = \sum_{i,j,l=1}^n dx^i \left( -u^{jl}\frac{\partial \log V}{\partial x^l} \frac{\partial}{\partial x^j} \right) d\theta^i \\
& = - \sum_{j,l=1}^n u^{jl}\frac{\partial \log V}{\partial x^l} d\theta^j .
\end{align*}
Thus
\begin{align*}
d \sigma & = - \sum_{i,j,l=1}^n\frac{\del}{\del x^i} \left( u^{jl}\frac{\partial \log V}{\partial x^l}  \right) dx^i \wedge d\theta^j,
\end{align*}
which vanishes when restricted to any $\bbT^{n}$-orbit. Therefore, for all $x \in \mathrm{int P}$ the $1$-form $\sigma |_{\pi^{-1}(x)}$ is closed and so defines a cohomology class $[\sigma] \in \mathrm{H}^1(\pi^{-1}(x), \mathbb{R})$. 
On the other hand, the Ricci $2$-form, as computed in equation \ref{eq:Ricci_form_rho_2} is given by
\begin{equation*}
\rho=- \sum_{i,j,l=1}^n \frac{\del}{\del x^i}\left( u^{jl} \frac{\del \log V}{\del x^l}  \right) dx^i \wedge d\theta^j,
\end{equation*}
and so agrees with $\rho$, i.e. we have proved that
$$\rho=d\sigma.$$
We further point out that the class $\sigma$ in only defined in $\mu^{-1}(\partial P)$ and so the equation above does not imply that $\rho$ is globally exact.
\end{remark}

\begin{example}[Standard $\mathbb{CP}^1 \times \mathbb{CP}^1$]\label{eq:CP1xCP1_Flow}
	Recall from example \ref{ex:CP1xCP1_Minimal} that in this case $P=[0,1]^2$ and $V(x_1,x_2)=2 \sqrt{x_1 x_2 (1-x_1) (1-x_2)}$ and
	\begin{align*}
	\frac{\del V}{\del x_1} & = \frac{2x_2 (1-x_2) ( 1-2x_1 )}{V} \\
	\frac{\del V}{\del x_1} & = \frac{2x_1 (1-x_1) ( 1-2x_2 )}{V}.
	\end{align*}
	The gradient of a function $f$ on $P$ with respect to $g_P$ is
	$$\nabla^{g_P} f = 2x_1(1-x_1) (\del_{x_1}f) \del_{x_1} + 2x_2(1-x_2) (\del_{x_2}f)\del_{x_2} ,$$ 
	and applying this to $-2\log V$ we find that the negative gradient flow of $-2\log V$ solves the system of ODE's
	$$\dot{x}_i = -(1-2x_i),$$
	for $i=1,2$. This is separable and can easily be integrated to give
	$$x_i(t)=\frac{1}{2} + \frac{2x_0-1}{2} e^{4t}. $$
	In particular, each $x_i(t)$ meets either $0$ or $1$ in finite time and so the flow $(x_1(t),x_2(t))$ hits the boundary of the polytope in finite time. Thus, any such Lagrangian torus which is not that at $(1/2,1/2)$ evolves through mean curvature flow to either a circle, if $x_1(0) \neq x_2(0)$ and $x_1(0)\neq 1-x_{2}(0)$, or a point if either $x_1(0) = x_2(0)$ and $x_1(0)= 1-x_{2}(0)$. Notice that the first is the generic situation.
\end{example}

\section{Examples of other minimal submanifolds}

\subsection{The Hsiang-Lawson lifting principle}

In \cite{hl} Hsiang and Lawson propose a method of finding minimal submanifolds of a manifold with a group action. The idea, in favourable cases, is roughly to find submanifolds of the quotient endowed with a certain metric (the Hsiang-Lawson metric). For completeness, we restate their result.
	 
\begin{theorem}[Hsiang-Lawson] 
	Let $G$ be a Lie group acting by isometries on the manifold $(M,g)$. A $G$-invariant submanifold $N$ of $M$ is minimal with respect to $g$ iff $N^*/G$ is minimal in $M^*/G$ with respect to a certain metric which is conformal to the quotient metric.
\end{theorem}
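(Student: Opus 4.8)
The plan is to integrate out the group directions and thereby reduce the minimality of the $G$-invariant submanifold $N\subset M$ to an ordinary minimality problem on the quotient. First I would set up the Riemannian geometry of the quotient on the principal stratum. Let $M^*$ denote the union of principal orbits; there the projection $\pi\colon M^*\to M^*/G$ is a submersion whose fibres are the orbits, and since $G$ acts by isometries the splitting of $TM^*$ into the vertical distribution (tangent to the orbits) and its horizontal orthogonal complement is $G$-invariant. Equipping $M^*/G$ with the quotient metric $\bar g$ that turns $\pi$ into a Riemannian submersion, a $G$-invariant submanifold $N$ with $N^*:=N\cap M^*$ contains whole orbits, so $\pi(N^*)=:\Sigma$ is a submanifold of dimension $k:=\dim N-\dim(\text{principal orbit})$, and the normal space of $N$ at a point lifts the normal space of $\Sigma$ at its image through the horizontal identification.

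The key step is a Fubini-type decomposition of the area functional. Writing $V(q)$ for the Riemannian volume of the orbit $\pi^{-1}(q)$, the coarea formula gives
\[
\mathrm{vol}_g(N)=\int_{\Sigma} V(q)\,\dvol_{\bar g}(q),
\]
so that $\mathrm{vol}_g(N)$ is a \emph{weighted} volume of $\Sigma$ in $(M^*/G,\bar g)$. I would then absorb the weight into a conformal change: setting $\tilde g:=V^{2/k}\,\bar g$, the induced volume element on any $k$-dimensional $\Sigma$ is $\dvol_{\tilde g}=V\,\dvol_{\bar g}$, whence $\mathrm{vol}_g(N)=\mathrm{vol}_{\tilde g}(\Sigma)$. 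This pins down the Hsiang--Lawson metric as the rescaling of the quotient metric by the conformal factor $V^{2/k}$, exactly the ``certain metric which is conformal to the quotient metric'' of the statement.

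Finally I would run the variational equivalence. Under the horizontal identification, $G$-invariant normal variations of $N$ are in bijection with arbitrary variations of $\Sigma$ in $M^*/G$ (lift a variation of $\Sigma$ horizontally and extend $G$-invariantly, and project to go back). The identity $\mathrm{vol}_g(N)=\mathrm{vol}_{\tilde g}(\Sigma)$ then shows that $N$ is stationary among $G$-invariant variations if and only if $\Sigma$ is stationary for $\mathrm{vol}_{\tilde g}$, i.e.\ minimal in $(M^*/G,\tilde g)$. To close the ``only if'' and ``if'' into the claimed equivalence with genuine minimality of $N$ I would invoke symmetric criticality in its sharp form: because $G$ acts by isometries the mean curvature $H$ of $N$ is itself $G$-invariant, so testing the first variation against the invariant field $X=H$ yields $\int_N|H|^2=0$ whenever $N$ is stationary among invariant variations, forcing $H\equiv 0$. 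Conversely a minimal $N$ is a fortiori stationary for invariant variations.

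The hard part will be the upgrade from ``stationary among $G$-invariant variations'' to genuine minimality, and making the weighted-volume decomposition and its first variation rigorous near the non-principal strata, where $V$ degenerates (just as $V$ vanishes on $\partial P$ in the toric case). The main obstacle is therefore analytic rather than formal: one must justify working on the open dense principal stratum $M^*$, verify that the integrals and the boundary contributions in the first-variation formula are well behaved as one approaches the singular orbits, and confirm that the conformal metric $\tilde g=V^{2/k}\bar g$ defines an admissible variational problem there.
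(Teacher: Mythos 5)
The paper does not prove this statement: it is quoted verbatim from Hsiang--Lawson \cite{hl} (and used as a black box in Sections~2 and~9), so there is no in-paper proof to compare against. That said, your argument is essentially the standard proof of the cited theorem and is sound in outline: the orthogonal splitting $T_pN^* = V_p \oplus (T_pN^*\cap H_p)$ gives the Fubini identity $\mathrm{vol}_g(N^*)=\int_\Sigma V\,\dvol_{\bar g}$, the conformal factor $V^{2/k}$ is exactly the one that converts the weighted volume into $\mathrm{vol}_{\tilde g}(\Sigma)$ (and is the metric the paper uses in its Hsiang--Lawson section), and the upgrade from invariant-stationarity to $H\equiv 0$ via testing against the $G$-invariant field $H$ is precisely the symmetric-criticality step. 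The one place you should be more careful is the passage back and forth across the singular strata, which you already flag: your variational identity only sees $N^*$, so to conclude $H\equiv 0$ on all of $N$ you need that $N\setminus N^*$ is nowhere dense in $N$ (so that $H=0$ on $N^*$ propagates by continuity), and conversely that variations supported near singular orbits contribute nothing extra; alternatively, one can bypass the global variational argument entirely by the pointwise computation relating $H_N$ at $p\in N^*$ to the $\tilde g$-mean curvature of $\Sigma$ at $\pi(p)$ via the O'Neill/submersion equations plus the gradient of $\log V$, which is how Hsiang--Lawson themselves identify the conformal metric. Neither issue invalidates your approach.
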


In the theorem, $M^*$ is the subset of $M$ consisting of $G$-principal orbits and $N^*=N\cap M^*.$ In this section we are going to look at the Hsiang-Lawson method on toric manifolds in some very simple examples. This might be a direction worth pursuing for finding examples of minimal submanifolds in particular situations.

In the situation when $(M^{2n},\omega,g,\mu)$ is a toric K\"ahler manifold, the quotient space $M^*/G=\mathrm{int} P$ comes equipped with a metric $g_P$ so that $\mu:(M^*,g) \to (\mathrm{int} P, g_P)$ is a Riemannian submersion. Then, given $\gamma^k \subset P$ we find that $\mu^{-1}(\gamma)$ is a $(n+k)$-dimensional submanifold of $M$ whose volume can be computed from the Fubini theorem to be 
\begin{align*}
\mathrm{Vol}(\mu^{-1}(\gamma)) & = \int_{\mu^{-1}(\gamma)} \dvol_{g|_{\mu^{-1}(\gamma)}} \\
& = \int_{\gamma} \dvol_{g_P |_\gamma}(x) \ \int_{\mu^{-1}(x)} \dvol_{g|_{\mu^{-1}(x)}} \\
& = \int_{\gamma} V(x) \dvol_{g_P |_\gamma}(x) ,
\end{align*}
which we can identify with the volume of $\gamma \subset \inte P$ when we equip $\inte P$ with the metric $V^{2/k}g_P$. Having this in mind we shall now see a few examples in the case when $M$ is 4-dimensional and $\gamma$ is a curve.

\begin{example}[Minimal hypersurfaces in the standard $\mathbb{CP}^1 \times \mathbb{CP}^1$]
	From the Hsiang-Lawson principle, a geodesic in $P=[0,1]^2$ with respect to the metric $V^2g_P$ lifts, via $\mu$, to a minimal submanifold of $\mathbb{CP}^1 \times \mathbb{CP}^1$. This metric can be written in the standard action coordinates $x_1,x_2$ on $P$ as
	$$V^2g_P= 2x_2(1-x_2) dx_1^2 + 2x_1(1-x_1) dx_2^2 .$$
	From Koszul's formula we find
	\begin{align*}
	\langle \nabla_{\partial_{x_1}} \partial_{x_2} , \partial_{x_1} \rangle & =  \del_{x_2} (2x_2(1-x_2)) = 2(1-2x_2) \\
	\langle \nabla_{\partial_{x_1}} \partial_{x_2} , \partial_{x_2} \rangle & =  \del_{x_1} (2x_1(1-x_1)) = 2(1-2x_1) ,
	\end{align*}
	from which it follows that
	$$\nabla_{\partial_{x_1}} \partial_{x_2} =\frac{1-2x_2}{x_2(1-x_2)} \partial_{x_1} + \frac{1-2x_1}{x_1(1-x_1)} \partial_{x_2} ,$$
	together with a similar formula for $\nabla_{\partial_{x_2}} \partial_{x_1}$ obtained from switching $x_1$ and $x_2$. In the same way we find $\langle \nabla_{\partial_{x_1}} \partial_{x_1} , \partial_{x_1} \rangle=0$ and
	$$\langle \nabla_{\partial_{x_1}} \partial_{x_1} , \partial_{x_2} \rangle = -\partial_{x_2} (2x_2(1-x_2))=-2(1-2x_2), $$
	so that
	$$\nabla_{\partial_{x_1}} \partial_{x_1} = - \frac{1-2x_2}{x_1(1-x_1)} \partial_{x_2},$$
	and a similar formula for $\nabla_{\partial_{x_2}} \partial_{x_2}$. We shall write a parametrised curve on $P$ as $\gamma(t)=(\gamma_1(t),\gamma_2(t))$ so that $\dot{\gamma}=(\dot{\gamma}_1,\dot{\gamma}_2)$ and 
	\begin{align*}
	\nabla_{\dot{\gamma}} \dot{\gamma} & = \sum_{i=1}^2 \left( \ddot{\gamma}_i \partial_{x_i} + \dot{\gamma}_i^2 \nabla_{\partial_{x_i}} \partial_{x_i} \right)  + 2\dot{\gamma}_1\dot{\gamma}_2 \nabla_{\partial_{x_1}}\partial_{x_2} \\
	& = \left( \ddot{\gamma}_1 - \dot{\gamma}_2^2 \frac{1-2\gamma_1}{\gamma_2(1-\gamma_2)} \right) \partial_{x_1} + \left( \ddot{\gamma}_2 - \dot{\gamma}_1^2 \frac{1-2\gamma_2}{\gamma_1(1-\gamma_1)} \right) \partial_{x_2} \\
	& \ \ \ \ +  2\dot{\gamma}_1\dot{\gamma}_2 \left( \frac{1-2\gamma_2}{\gamma_2(1-\gamma_2)} \partial_{x_1} + \frac{1-2\gamma_1}{\gamma_1(1-\gamma_1)} \partial_{x_2} \right)\\
	& = \left( \ddot{\gamma}_1 - \dot{\gamma}_2^2 \frac{1-2\gamma_1}{\gamma_2(1-\gamma_2)} + 2\dot{\gamma}_1\dot{\gamma}_2  \frac{1-2\gamma_2}{\gamma_2(1-\gamma_2)} \right) \partial_{x_1} \\
	& \ \ \ \ + \left( \ddot{\gamma}_2 - \dot{\gamma}_1^2 \frac{1-2\gamma_2}{\gamma_1(1-\gamma_1)}  + 2\dot{\gamma}_1\dot{\gamma}_2 \frac{1-2\gamma_1}{\gamma_1(1-\gamma_1)}  \right) \partial_{x_2} .
	\end{align*}
	The curve $\gamma$ is minimal with respect to $V^2g_P$ if and only if either $\gamma \in \subset V^{-1}(0)$, or $\nabla_{\dot{\gamma}} \dot{\gamma}=0$. The first case, i.e. $V(\gamma)=0$, happens if and only if $\gamma$ is contained in the boundary of the polytope. Then, the pre-image by $\mu$ of a boundary face is of the form $\lbrace \ast \rbrace \times \mathbb{CP}^1$ and is a divisor in $\mathbb{CP}^1 \times \mathbb{CP}^1$, thus an obvious minimal submanifold. If $V(\gamma) \neq 0,$ $\nabla_{\dot{\gamma}} \dot{\gamma} =0$, which we may write as
	\begin{equation}\label{geodesics_square}
	\begin{cases}
	\ddot{\gamma}_1 & =  \dot{\gamma}_2^2 \frac{1-2\gamma_1}{\gamma_2(1-\gamma_2)} + 2\dot{\gamma}_1\dot{\gamma}_2  \frac{1-2\gamma_2}{\gamma_2(1-\gamma_2)}  , \\
	\ddot{\gamma}_2 & =  \dot{\gamma}_1^2 \frac{1-2\gamma_2}{\gamma_1(1-\gamma_1)}  + 2\dot{\gamma}_1\dot{\gamma}_2 \frac{1-2\gamma_1}{\gamma_1(1-\gamma_1)}  .
	\end{cases}
	\end{equation}
	We have the following obvious solutions: $\gamma_1=1/2$ is constant (or $\gamma_2=1/2$) in which case the equations reduce to $\ddot{\gamma}_2=0$ (respectively  $\ddot{\gamma}_1=0$). Thus $\gamma$ must be either $\lbrace \tfrac{1}{2} \rbrace \times [0,1]$ or $[0,1] \times \lbrace \tfrac{1}{2} \rbrace$ and $\mu^{-1}(\gamma)$ is a minimal $S^1 \times \mathbb{CP}^1$ or $\mathbb{CP}^1 \times S^1$ respectively. These minimal submanifolds are well known but they illustrate a method for obtaining more interesting solutions as we shall see below. Indeed, both these geodesics could have been obtained by the symmetries of $P$ which reflect it along these geodesics. Such reflections preserve $V^2g_P$ and so we could have immediately concluded that their fixed locus consists of geodesics.\footnote{The reflections along the diagonals also preserve $V^2g_P$ but their pre-image is not smooth. Indeed, it has two points which have neighbourhoods modelled on cones over a $2$-torus.} Furthermore, it may be possible to find other solutions of Equation (\ref{geodesics_square}) which give rise to interesting minimal submanifolds of $\mathbb{CP}^1 \times \mathbb{CP}^1.$
\end{example}

In the spirit of the example above where the geodesics for the Hsiang-Lawson metric on the square are the fixed-point set of some extra symmetry on the polytope we may try to apply Hsiang-Lawson lifting principle to ``obvious" geodesics on other symmetric polytopes. Indeed, consider the following examples:

\begin{itemize}
\item For $\bcp^2$, the moment polytope is a triangle with a symmetry $(x,y)\mapsto (y,x)$. For any cohomogeneity-$1$ metric on $\bcp^2$ the segment corresponding to the intersection of $$\{(x,y):x=y\}$$ with the triangle is a geodesic because it is a fixed-point set under an isometry. Note that the function on the triangle given by the volume of a fibre is invariant by $(x,y)\mapsto (y,x)$ if the metric has cohomogeneity-$1$. The lift of this segment has a singularity modelled on a cone over a 2-torus, so the method does not yield a submanifold in this case. This can however be smoothed by blowing up as we shall now see.

\item Consider $\bcp^1\#\overline{\bcp^1}$ with a cohomogeneity-$1$ metric. The moment polytope can be chosen to be
$$
P=\{(x_1,x_2)\in \bbR^2: x_1\geq 0, x_2\geq 0, a\leq x_1+x_2\leq 1\},
$$
where $a\in ]0,1[$. This admits a symmetry $(x,y)\mapsto (y,x)$. Therefore the segment
$$
P\cap \{(x,y):x=y\}
$$ 
is a geodesic for the Hsiang-Lawson metric and its lift is a minimal $S^1\times S^2$ in $\bcp^1 \# \overline{\bcp^1}$. 

\item A similar construction can be carried out for $\bcp^1\#3\overline{\bcp^1}$ with a cohomogeneity-$1$ metric. The moment polytope is
$$
P=\{(x_1,x_2)\in \bbR^2: x_1\geq 0, x_2\geq 0, a\leq x_1+x_2\leq 1, x_2\leq b, \, x_1\leq b\}.
$$
The lift of 
$$
P\cap \{(x,y):x=y\}
$$ 
is a minimal $S^1\times S^2$ in $\bcp^1 \# 3\overline{\bcp^1}$.
\end{itemize}



\end{document}